\documentclass[12pt,a4paper]{amsart}

\usepackage[abbrev]{amsrefs}
\usepackage{amssymb}

\theoremstyle{plain}
  \newtheorem{thm}{Theorem}[section]
  \newtheorem{lem}[thm]{Lemma}
  
  \newtheorem{cor}[thm]{Corollary}
  \newtheorem{prop}[thm]{Proposition}
  \newtheorem{clm}[thm]{Claim}

\theoremstyle{definition}
  \newtheorem{defn}[thm]{Definition}
  
  \newtheorem{ex}[thm]{Example}

\theoremstyle{remark}
  \newtheorem{rem}[thm]{Remark}

  \newtheorem*{ack}{Acknowledgment}

\numberwithin{equation}{section}

\newcommand{\cA}{\mathcal{A}}

\newcommand{\cC}{\mathcal{C}}

\newcommand{\cF}{\mathcal{F}}
\newcommand{\cM}{\mathcal{M}}

\DeclareMathOperator{\supp}{supp}

\DeclareMathOperator{\Ker}{Ker}
\DeclareMathOperator{\Image}{Im}

\DeclareMathOperator{\vol}{vol}

\DeclareMathOperator{\graph}{graph}
\newcommand{\ce}{\text{\rm ce}}
\newcommand{\mce}{\text{\rm mce}}
\newcommand{\wmce}{\text{\rm wmce}}
\newcommand{\R}{\mathbb{R}}
\newcommand{\Z}{\mathbb{Z}}
\newcommand{\id}{\textrm{id}}

\makeatletter
\@namedef{subjclassname@2020}{
\textup{2020} Mathematics Subject Classification}
\makeatother

\begin{document}

\title[Coarse geometry of mm-spaces]
{Coarse geometry of metric measure spaces}
\thanks{This work was supported by JSPS KAKENHI Grant Numbers 24K06729 and 24K06714.}
\author{Takayuki Okuda}
\address{(T. Okuda) Graduate School of Advanced Science and Engineering, Hiroshima University, 1-3-1 Kagamiyama, Higashi-Hiroshima City, Hiroshima, 739-8526, Japan}
\email{okudatak@hiroshima-u.ac.jp}
\author{Takashi Shioya}
\address{(T. Shioya) Mathematical Institute, Tohoku University, Sendai 980-8578, Japan}
\email{shioya@math.tohoku.ac.jp}
\subjclass[2020]{53C23, 51F30, 49Q22}
\keywords{metric measure space, coarse geometry, weighted $\ell^\infty$ homology, measured coarse equivalence, weighted amenability, measure growth, optimal transport theory}
\begin{abstract}
Using ideas from optimal transport theory, we introduce a notion of measured coarse equivalence for metric measure spaces and define a corresponding variant of the uniformly finite homology of Block and Weinberger, called weighted $\ell^\infty$ homology, for large-scale doubling metric measure spaces.   We prove that this homology is invariant under measured coarse equivalence and that the vanishing of its zeroth homology is equivalent to weighted non-amenability.  The proofs combine techniques from optimal transport theory and the disintegration of measures. 
\end{abstract}
\maketitle

\tableofcontents

\section{Introduction}

Coarse geometry is the study of the large-scale structure of metric spaces (or, more generally, coarse spaces). It is an active area of research with close connections to geometric group theory and the topology of manifolds.
We refer to the books \cites{NY:LSgeom, R:lect} by Nowak--Yu and Roe
for coarse geometry.

In this paper, we develop a theory of coarse geometry of metric measure spaces, where an mm-space (or \emph{mm-space} for short) means a complete separable metric space equipped with a fully supported boundedly finite Borel measure.
Our motivation is as follows.
In coarse geometry, volume growth is one of the fundamental notions.
For an mm-space, it is natural to consider "measure growth", namely, the growth of the measure of metric balls.
It is also natural to introduce a notion of weighted amenability of an mm-space, since the amenability of a metric space is defined by the failure of an isoperimetric inequality.  In fact, such a definition has already appeared in \cite{GT:meas-scaling}.
Moreover, amenability as a metric space is equivalent to the non-existence of a Ponzi scheme.  It seems natural to formulate a Ponzi scheme on an mm-space in the language of optimal transport theory.
A hint for such a formulation can be found in \cite{My}.

To develop a theory of coarse geometry for mm-spaces, one of the first major problems is to formulate an appropriate notion of coarse equivalence between mm-spaces.
A naive approach is to require the existence of a measure-preserving map.
However, such an approach does not work well for establishing coarse equivalences between discrete and continuous spaces.
This is analogous to the well-known limitation of the Monge problem in optimal transport, which is formulated in terms of transport maps and therefore cannot directly relate discrete and continuous measures.
Kantorovich introduced the notion of a coupling (or transport plan) between measures to overcome the limitation and formulated a relaxed version of the transport problem, now known as the Kantorovich problem.
Inspired by the Kantorovich relaxation in optimal transport theory, we formulate a notion of coarse equivalence between mm-spaces in terms of couplings of measures.

We say that two mm-spaces $X$ and $Y$ are \emph{weakly measured coarsely equivalent} and write $X \simeq_\wmce Y$ if there exist Borel measures $\nu_X$ and $\nu_Y$ that are uniformly comparable to $\mu_X$ and $\mu_Y$, respectively, together with a coupling $\sigma$ between $\nu_X$ and $\nu_Y$, such that the support of $\sigma$ is a coarse correspondence (equivalently, bi-bornologous) (see Definition \ref{defn:w-mce} for precise definition).
We call such a coupling a \emph{measured coarse correspondence}.

Based on the idea of Coulhon and Saloff-Coste \cite{CS:var-inf}, Tessera \cite{Ts:large-sob} introduced a notion of \emph{large-scale equivalence} between large-scale doubling mm-spaces in terms of the measures of metric balls.
Here, ``large-scale doubling'' means that the doubling condition holds only for sufficiently large radii, and the doubling constant may depend on the radius (see Definition \ref{defn:ls-doubling}).  Complete Riemannian manifolds with Ricci curvature bounded below form an important class of large-scale doubling mm-spaces.  Moreover, every finitely generated group equipped with the word metric and the counting measure is large-scale doubling.

We prove that under the large-scale doubling condition, large-scale equivalence coincides with weakly measured coarse equivalence (see Theorem \ref{thm:wmce-large-scale}).
Note that large-scale equivalence is not an equivalence relation for general mm-spaces.
In this sense, weakly measured coarse equivalence is a more satisfactory notion than large-scale equivalence.

The notion of (weakly) measured coarse equivalence provides a framework for studying large-scale properties of mm-spaces that depend simultaneously on the metric and the measure. It is therefore natural to seek homological invariants preserved under this equivalence.  One of the fundamental such invariants in coarse geometry is uniformly finite homology introduced by Block--Weinberger \cite{BW}.
Motivated by this, we define \emph{weighted $\ell^\infty$ homology}, which is a variant of uniformly finite homology, for a large-scale doubling mm-space.  We note that, unlike uniformly finite homology, weighted $\ell^\infty$ homology depends not only on the metric structure but also on the underlying measure.

To define weighted $\ell^\infty$ homology of a large-scale doubling mm-space $X$, we first define it for a discretization $\Gamma$ of $X$ and then we define the weighted $\ell^\infty$ homology of $X$ as that of $\Gamma$.  Later we prove that this homology is independent of the choice of a discretization.  For the definition of the chain complex on $\Gamma$,
a key ingredient is to consider the measure on the product space $\Gamma^{n+1}$, $n \ge 0$, defined by
\[
\mu_{\Gamma^{n+1}} := \sum_{(x_0,\dots,x_n) \in \Gamma^{n+1}} (\mu_\Gamma(x_0) \cdots \mu_\Gamma(x_n))^{\frac1{n+1}} \delta_{(x_0,\dots,x_n)}
\]
where $\mu_\Gamma$ is the underlying measure on $\Gamma$.
Note that the exponent $\frac1{n+1}$ is crucial here.
This choice exactly compensates for the change of exponent under the boundary operator.
If $\mu_\Gamma$ is the counting measure, then the resulting (weighted) $\ell^\infty$ homology coincides with the uniformly finite homology.
The weighted $\ell^\infty$ homology also generalizes the controlled coarse homology of Nowak--\v Spakula \cite{NS:cont}.
By contrast, if one uses the ordinary product measure (without the exponent $\frac{1}{n+1}$), the boundary operator does not preserve the resulting $\ell^\infty$ chain groups.  Thus, this construction does not yield a chain complex.

One of the main theorems in this paper is stated as follows.

\begin{thm} \label{thm:main1}
Let $X$ and $Y$ be large-scale doubling mm-spaces.
If $X \simeq_\wmce Y$, then
\[
H_n^{(\infty)}(X) \cong H_n^{(\infty)}(Y)
\]
for every $n \ge 0$.
\end{thm}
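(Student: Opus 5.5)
The plan is to reduce everything to discretizations and to build explicit chain maps from a measured coarse correspondence. First I fix discretizations $\Gamma\subset X$ and $\Lambda\subset Y$: uniformly discrete nets with cells (e.g.\ a Borel Voronoi-type partition $\{A_x\}_{x\in\Gamma}$ of $X$ into sets of uniformly bounded diameter), with $\mu_\Gamma(x)=\mu_X(A_x)$. Large-scale doubling then gives bounded geometry: balls of fixed radius in $\Gamma$ contain uniformly boundedly many points, and $\mu_\Gamma(x)$ and $\mu_\Gamma(x')$ are uniformly comparable whenever $d(x,x')\le R$. Independence of the choice of discretization is claimed in the paper, so $H_n^{(\infty)}(X)=H_n^{(\infty)}(\Gamma)$. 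Since $\nu_X$ is uniformly comparable to $\mu_X$, replacing $\mu_\Gamma$ by the discretization of $\nu_X$ gives comparable weights. The weighted chain groups then coincide, because the weight $(\mu(x_0)\cdots\mu(x_n))^{1/(n+1)}$ changes by a bounded factor. Hence I may assume $\mu_X=\nu_X$ and $\mu_Y=\nu_Y$, and that $\sigma$ is an honest coupling.

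Next I push $\sigma$ down to a discrete coupling $\tau$ on $\Gamma\times\Lambda$ by setting $\tau(x,y)=\sigma(A_x\times B_y)$. Its marginals are $\mu_\Gamma$ and $\mu_\Lambda$. Because $\supp\sigma$ is a coarse correspondence, $\tau(x,y)>0$ only when $(x,y)$ lies within bounded distance of the correspondence. By bounded geometry, each $x$ then has uniformly boundedly many partners $y$, and the weights satisfy $\mu_\Gamma(x)\asymp\mu_\Lambda(y)$ uniformly. This last point needs care: it should follow from comparing ball measures, using that the coupling transports the mass of a ball near $x$ into a bounded neighbourhood of the corresponding points, together with doubling.

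The chain map $f_\#\colon C_n(\Gamma)\to C_n(\Lambda)$ is then defined by transport weights rather than by a map. For a chain $c$, viewed as a density $c(x_0,\dots,x_n)$ against $\mu_{\Gamma^{n+1}}$, I set
\[
(f_\# c)(y_0,\dots,y_n)=\sum_{x_0,\dots,x_n} c(x_0,\dots,x_n)\prod_{i=0}^n \frac{\tau(x_i,y_i)}{\mu_\Lambda(y_i)}
\]
(suitably normalized by the $\frac1{n+1}$-power weights). One might worry that this product breaks the boundary relation. It does not: summing out $y_i$ reproduces $\mu_\Gamma(x_i)$ on the $x_i$-side, which is exactly the face deletion. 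I will check the boundary compatibility on the level of measures, treating chains as signed measures on $\Gamma^{n+1}$ with bounded density, and this is where the exponent $\frac1{n+1}$ is used. Bounded propagation and uniform boundedness of $f_\#$ follow from the previous paragraph. I define $g_\#$ symmetrically using the transposed coupling.

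The composite $g_\#f_\#$ is given by the kernel $\kappa(x,x')=\sum_y \tau(x,y)\tau(x',y)/\mu_\Lambda(y)$. This is a symmetric coupling of $\mu_\Gamma$ with itself supported near the diagonal. It remains to show that any such bounded-propagation Markov-type operator is chain homotopic to the identity. For this I use the standard prism operator
\[
h(c)=\sum_{j}(-1)^j\,[x_0,\dots,x_j,x'_j,\dots,x'_n]
\]
weighted by the kernel, and verify it preserves the weighted $\ell^\infty$ condition. I expect this homotopy step to be the main obstacle: the mixed simplices carry weights $(\prod\mu(x_i)\prod\mu(x'_k))^{1/(n+2)}$, and one needs the local comparability of $\mu_\Gamma$ at bounded distance to keep $h$ bounded. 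Granting it, $f_\#$ and $g_\#$ are mutually inverse on homology, which proves the theorem.
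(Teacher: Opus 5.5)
Your proposal is correct and is essentially the paper's proof: $\tau(x,y)=\sigma(A_x\times B_y)=(\pi_X\times\pi_Y)_\bullet\sigma$ is exactly the glued coupling along $\Gamma\to X\to Y\to\Lambda$, your $f_\#$ is the transport operator $\tau_\sharp$, $\kappa$ is the gluing $\tau^{-1}\circ\tau$, and your kernel-weighted prism is literally the paper's homotopy $\gamma_\sharp(h_n)_\bullet$ from Lemma~\ref{lem:equiv-star}, with $\gamma$ built from $I_\Gamma$ and $\kappa$. The comparability $\mu_\Gamma(x)\asymp\mu_\Lambda(y)$ on $\supp\tau$ that you flag follows directly from $\mu_\Lambda(y)=\sum_{x'}\tau(x',y)$ together with BG, BR and bornologousness of $(\supp\tau)^{-1}$, as in Lemma~\ref{lem:sigma-sharp-op-prime}; also, commuting with face maps uses only that the kernels are probability measures, whereas the exponent $\frac1{n+1}$ is what makes $\partial$ preserve condition $(\ell^\infty)$ (Lemma~\ref{lem:Cinfty}).
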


We next consider weighted amenability for mm-spaces.
Amenability of a metric space is defined by the failure of an isoperimetric inequality (see \cites{BW,NY:LSgeom}).
Mimicking this definition, we define weighted amenability for mm-spaces by the failure of an analogous isoperimetric inequality (see Definition \ref{defn:amenable}).
For a discrete metric space of bounded geometry equipped with the counting measure, weighted amenability coincides with amenability as a metric space (see Remark \ref{rem:BG-amenable}).

We have the following second main theorem.

\begin{thm} \label{thm:main2}
Let $X$ be a large-scale doubling mm-space.
Then the following are equivalent.
\begin{enumerate}
\item
The zeroth weighted $\ell^\infty$ homology group of $X$ vanishes.
\item
$X$ is weighted non-amenable.
\end{enumerate}
\end{thm}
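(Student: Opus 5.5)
The plan is to follow the Block--Weinberger argument, adapted to the weights. First reduce to a discretization: by Theorem \ref{thm:main1}, or rather by the independence of $H_0^{(\infty)}$ from the discretization, and since weighted amenability should likewise pass to a discretization, it suffices to work on a uniformly discrete, large-scale doubling $\Gamma$ with weights $\mu_\Gamma$. Here the weighted amenability of $\Gamma$ should be equivalent to that of $X$, up to comparable constants, by the doubling hypothesis.

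On $\Gamma$, a $0$-chain is a function $c\colon \Gamma\to\R$ with $|c(x)|\le C\mu_\Gamma(x)$. A $1$-chain is a function $b$ on pairs, supported within bounded distance of the diagonal, with $|b(x,y)|\le C(\mu_\Gamma(x)\mu_\Gamma(y))^{1/2}$. By large-scale doubling, neighbouring weights are uniformly comparable, so this bound is equivalent to $|b(x,y)|\le C'\mu_\Gamma(x)$.

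The key class is the fundamental class $[\mu_\Gamma]$, i.e.\ the $0$-chain $x\mapsto\mu_\Gamma(x)$. I would first show that $H_0^{(\infty)}=0$ if and only if $[\mu_\Gamma]=0$. If $\mu_\Gamma=\partial b$, then for an arbitrary chain $c$ one can write $c=f\mu_\Gamma$ with $f$ bounded. Define $b'(x,y)=f(x)b(x,y)$, which is again a bounded-weighted, controlled $1$-chain. Then $\partial b'=c$ up to a sign convention, because $\partial b'(x)=\sum_y f(x)b(x,y)-\sum_y f(y)b(y,x)$. This needs care. The cleaner route is to view $b$ as a ``Ponzi flow'' sending mass $\mu_\Gamma(x)$ out of each point, and to push $c$ through it, which requires $c\ge0$. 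So I would split $c=c_+-c_-$ with $0\le c_\pm\le C\mu_\Gamma$ and reweight the flow pointwise, in the Block--Weinberger style. Where that proves awkward, I would use instead that $\partial$ of the cone-type chains is linear over bounded functions modulo controlled error.

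Next comes the equivalence itself. \textbf{Non-amenable implies $[\mu_\Gamma]=0$:} weighted non-amenability gives an isoperimetric inequality $\mu(\partial_r F)\ge \varepsilon\,\mu(F)$ for finite $F$. This is exactly the Hall-type condition for a bipartite ``$k$-to-one'' weighted matching. Apply a weighted Hall marriage theorem (max-flow/min-cut on finite pieces, then a compactness or diagonal argument over exhausting finite sets) to obtain a flow $b$ along edges of length $\le r$. This flow should have capacity bounded by $C\mu_\Gamma$, and net outflow $\mu_\Gamma(x)$ at every vertex; then $\partial b=\mu_\Gamma$. \textbf{$[\mu_\Gamma]=0$ implies non-amenable:} if $\mu_\Gamma=\partial b$ with $b$ supported in $r$-neighbourhoods and $|b|\le C\mu_\Gamma$, sum over a finite $F$. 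Interior contributions cancel, so $\mu_\Gamma(F)=\sum_{x\in F}\partial b(x)$ is bounded by the flow across the boundary. That is at most $C\,N_r\,\mu_\Gamma(\partial_rF)$, where $N_r$ is the uniform bound on the number of $r$-neighbours coming from doubling. This yields the isoperimetric inequality, i.e.\ weighted non-amenability.

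The main obstacle is the weighted Hall/flow step. I need a max-flow/min-cut statement with vertex demands $\mu_\Gamma(x)$ and edge capacities proportional to $\mu_\Gamma$, in which the cut condition is exactly the isoperimetric inequality with an enlarged radius. I also need the passage from finite subgraphs to all of $\Gamma$. I would try the Ford--Fulkerson theorem on finite balls, with the isoperimetric constant iterated $k$ times ($\mu(N_{kr}F)\ge 2\mu(F)$) to gain slack. Then I would take a pointwise limit of the flows, which is possible since the capacities are uniformly bounded relative to the weights at each point.
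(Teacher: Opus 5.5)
Your direction (1)$\Rightarrow$(2) is essentially the paper's Proposition~\ref{prop:H0-non-ame}. The reduction to a BGR discretization is also fine, although the transfer of weighted amenability comes from Proposition~\ref{prop:c-eq-am}, which does not use doubling.

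For (2)$\Rightarrow$(1) you take a genuinely different, constructive route. The paper argues by contraposition: it separates the cone $\cA$ of uniformly positive densities from the boundaries by Hahn--Banach, approximates the resulting mean by finitely supported probability densities, and extracts F\o lner sets from level sets.

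\textbf{The gap.} As written, your chain breaks at the step $[\mu_\Gamma]=0\Rightarrow H_0^{(\infty)}(\Gamma)=0$. With the paper's sign convention, $b'(x,y)=f(x)b(x,y)$ gives $(\partial b')(y)-f(y)(\partial b)(y)=\sum_x (f(x)-f(y))\,b(x,y)$, which is not zero. Reweighting edge by edge does not fix this in general, because the mass $c(x)$ has to be followed along the flow lines of $b$ out to infinity. Your fallback, ``linear modulo controlled error'', only moves the problem, since the error would itself have to be shown to be a boundary.

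Your phrase ``push $c$ through it'' points the right way. But carrying it out is exactly the content of the paper's Lemma~\ref{lem:tail}, which feeds Lemma~\ref{lem:H0}. There, after making $\psi\ge0$, mass is propagated backwards through the disintegration of $\psi$. One must then prove two things:
\begin{enumerate}
\item the accumulated chain stays below $\psi$, so it lies in $C_1^{(\infty)}(\Gamma)$;
\item the residual mass $\varphi_k$ tends to $0$, so the boundary is exactly the prescribed chain.
\end{enumerate}
Your sketch supplies neither estimate.

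\textbf{A simpler repair.} You can skip the fundamental class entirely and run your flow argument with an arbitrary demand $0\le c\le C\mu_\Gamma$.
\begin{itemize}
\item On a ball $B_n$, give the source edge to $y\in B_n$ capacity $c(y)$. Give each pair with $d_\Gamma(x,y)\le r$ capacity $K\mu_\Gamma(x)$. Join every vertex outside $B_n$ to the sink.
\item Cuts of finite capacity correspond to sets $A\subset B_n$. The min-cut condition reads $K\sum_{x\in A,\,y\notin A,\,d_\Gamma(x,y)\le r}\mu_\Gamma(x)\ge c(A)$.
\item This holds with $K=CC'(1+N_r\lambda_r)$. It follows from the isoperimetric inequality $\mu_\Gamma(A)\le C'\mu_\Gamma(\partial_rA)$, once you use BG and BR to bound the outer half of the two-sided boundary $\partial_rA$ by $N_r\lambda_r$ times its inner half.
\end{itemize}
No slack iteration and no ``$k$-to-one'' Hall matching is needed. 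In fact a one-sided infinite Hall matching would only give $\partial b\ge\mu_\Gamma$, i.e.\ a Ponzi scheme, and you would again need Lemma~\ref{lem:tail}.

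Saturating the source makes the net outflow at each $y\in B_n$ exactly $c(y)$. Since $0\le b_n(x,y)\le K\mu_\Gamma(x)$ uniformly in $n$, a diagonal pointwise limit preserves this finite-sum identity at every vertex. Applying this to $c_\pm$ gives $H_0^{(\infty)}(\Gamma)=0$ directly, with $\Delta\le r$ and density at most $K\lambda_r^{1/2}$.

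\textbf{Comparison.} Repaired this way, your route gives explicit constants and avoids both Hahn--Banach and the tail lemma. The paper's route instead mirrors Block--Weinberger's duality argument and needs no max-flow/min-cut theory or compactness step.
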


Block--Weinberger \cite{BW} proved that, for a metric space admitting a quasi-lattice (i.e., a discrete net of bounded geometry), the vanishing of the zeroth uniformly finite homology is equivalent to non-amenability as a metric space.
Theorem \ref{thm:main2} generalizes this result to the weighted setting.

Weighted amenability depends essentially on the underlying measure, as demonstrated by the following example.  The real line $\R$ equipped with the Lebesgue measure $dx$ is weighted amenable, whereas $\R$ with the weighted measure $e^x dx$ is weighted non-amenable.  Correspondingly, the zeroth weighted $\ell^\infty$ homology group is nontrivial in the former case and vanishes in the latter.

Several additional examples are discussed in Section \ref{sec:examples}.

As a continuation of this paper, we prove a weighted version of Block--Weinberger--Whyte boundary criterion \cite{W:ame}, which shows a refinement of Theorem \ref{thm:main2} and also recovers a boundary criterion for the fundamental class in controlled coarse homology of Nowak--\v{S}pakula \cite{NS:cont}.
We publish this separately as \cite{OS:bdycri}.

\medskip\noindent
{\bf Idea of proof.}
We briefly explain the main ideas of the proofs.
To define the weighted $\ell^\infty$ homology of a large-scale doubling mm-space $X$, we first associate a discretization of $X$ that is measured coarsely equivalent to $X$, and then define the weighted $\ell^\infty$ homology to be that of the discretization.

A key ingredient in the proof is the introduction of a natural chain map associated with a measured coarse correspondence between two discrete mm-spaces, which we call the \emph{transport operator}.
Since a measured coarse correspondence is represented by a coupling of measures rather than by a map between spaces, the transport operator is constructed by means of the disintegration of measures.
A similar idea for the definition of the transport operator appears in \cite{GKMS}.

Using the transport operator, we prove that the weighted $\ell^\infty$ homology is invariant under measured coarse equivalence.
This shows, in particular, that the definition is independent of the choice of discretization.

The overall strategy follows the classical proof of the homotopy invariance of singular homology via the prism operator, although the argument is considerably more delicate because of the measure-theoretic nature of the transport operator.

Moreover, weighted $\ell^\infty$ homology is unchanged when the underlying measure is replaced by a uniformly comparable measure.
Combining this fact with the invariance under measured coarse equivalence, we obtain Theorem \ref{thm:main1}.

For the proof of Theorem \ref{thm:main2}, we consider a discretization $\Gamma$ of a large-scale doubling mm-space $X$.
The weighted $\ell^\infty$ homology of $X$ is defined to be that of $\Gamma$.
It is not difficult to prove that the weighted amenability of $X$ is equivalent to the weighted amenability of $\Gamma$.
Therefore, it suffices to prove the theorem for the discretization $\Gamma$.

The implication (1)$\implies$(2) for $\Gamma$ follows from a straightforward argument.

For the proof of the converse (2)$\implies$(1) for $\Gamma$, the overall strategy of the proof is similar to that of Block--Weinberger \cite{BW}.
In the argument of \cite{BW}, the vanishing of the fundamental homology class implies the vanishing of the zeroth homology group.  The key ingredient is the so-called Eilenberg swindle, carried out in the setting of homology with integral coefficients.  In our setting, however, weighted $\ell^\infty$ homology with integral coefficients is not available
because of the presence of weights.  Instead, we establish an analogue of the Eilenberg swindle for the weighted $\ell^\infty$ homology of $\Gamma$ with real coefficients by using disintegration of measures.  This allows us to prove the vanishing of the zeroth weighted $\ell^\infty$ homology of $\Gamma$.  In this way, we prove the implication (2)$\implies$(1) for $\Gamma$.

\medskip\noindent
{\bf Organization of the paper.}
In Section \ref{sec:prelim}, we review some preliminaries from measure theory and optimal transport theory.
In Section \ref{sec:mce}, we introduce (weakly) measured coarse equivalence and establish its basic properties.
In Section \ref{sec:L-infty-homology}, we define weighted $\ell^\infty$ homology for a discrete mm-space.
In Section \ref{sec:trans-op}, we introduce the transport operator and establish its basic properties.
In Section \ref{sec:inv-mBW}, we prove that the transport operator induces an isomorphism between weighted $\ell^\infty$ homology groups, thereby completing the proof of Theorem \ref{thm:main1}.
In Section \ref{sec:amenable}, we discuss weighted amenability and prove Theorem \ref{thm:main2}.
In Section \ref{sec:qi-measgr}, we prove that measure growth is invariant under weakly measured quasi-isometries.  We also show that subexponential measure growth implies weighted amenability.
In Section \ref{sec:examples}, we present several examples showing that amenability as a metric space and weighted amenability do not coincide.
Finally, in Section \ref{sec:appendix}, we study the relation between large-scale equivalence and weakly measured coarse equivalence, together with some additional results.

\begin{ack}
The authors used AI solely for English-language editing. The authors take full responsibility for the final version of the manuscript.
\end{ack}

\section{Preliminaries}
\label{sec:prelim}

In this section, we recall some definitions from measure theory and optimal transport theory and fix the notation used throughout the paper.

Let $X$ be a metric space with metric $d_X$.
A \emph{boundedly finite measure} on $X$ is a Borel measure on $X$ such that every bounded Borel set has finite measure.
Note that any boundedly finite measure is $\sigma$-finite.
Denote by $\cM_+(X)$ the set of boundedly finite measures on $X$.

Throughout this paper, by an \emph{mm-space} we mean a complete separable metric space equipped with a boundedly finite Borel measure with full support. For an mm-space $X$, we denote its metric and measure by $d_X$ and $\mu_X$, respectively.

For two mm-spaces $X$ and $Y$,
a \emph{coupling between $\mu_X$ and $\mu_Y$}
is a Borel measure $\sigma$ on $X \times Y$ such that $(p_X)_\bullet\sigma = \mu_X$ and $(p_Y)_\bullet\sigma = \mu_Y$,
where $p_X : X \times Y \to X$ and $p_Y : X \times Y \to Y$ denote the canonical projections,
and $(p_X)_\bullet\sigma$ and $(p_Y)_\bullet\sigma$ are the corresponding push-forward measures.
Denote by $\Pi(\mu_X,\mu_Y)$ the set of all couplings between $\mu_X$ and $\mu_Y$.

For a coupling $\sigma \in \Pi(\mu_X,\mu_Y)$, let $\{\tilde{\sigma}_x\}_{x \in X}$ be a disintegration of $\sigma$ with respect to the projection $p_X : X \times Y \to X$ (cf.~\cite{Fl:meas-theory}*{452I Theorem}).
More precisely, $\{\tilde{\sigma}_x\}_{x\in X}$ is a family of Borel probability measures on $X\times Y$ satisfying the following properties:
\begin{enumerate}
\item
For every Borel set $A \subset X \times Y$, the function $X \ni x \mapsto \tilde{\sigma}_x(A)$ is Borel measurable.
\item
$\tilde{\sigma}_x((\{x\} \times Y)^c) = 0$ $\mu_X$-a.e.~$x \in X$.
\item
\[
\sigma = \int_X \tilde{\sigma}_x \, d\mu_X(x),
\]
that is,
\[
\sigma(A) = \int_X \tilde{\sigma}_x(A) \, d\mu_X(x),
\]
for every Borel set $A \subset X \times Y$.
\end{enumerate}
A disintegration $\{\tilde{\sigma}_x\}_{x\in X}$ is uniquely determined $\mu_X$-a.e.
Similarly, let $\{\tilde{\sigma}_y\}_{y \in Y}$ be a disintegration of $\sigma$ with respect to the projection $p_Y : X \times Y \to Y$.
We define $\sigma_x := (p_Y)_\bullet\tilde{\sigma}_x$ and
$\sigma_y := (p_X)_\bullet\tilde{\sigma}_y$.
Since
$\tilde{\sigma}_x = \delta_x \otimes \sigma_x$ for $\mu_X$-a.e.~$x \in X$ and $\tilde{\sigma}_y = \sigma_y \otimes \delta_y$ for $\mu_Y$-a.e.~$y \in Y$,
 $\{\delta_x \otimes \sigma_x\}_{x \in X}$ and $\{\sigma_y \otimes \delta_y\}_{y \in Y}$ are disintegrations of $\sigma$.
$\{\sigma_x\}_{x \in X}$ is a family of Borel probability measures on $Y$ satisfying the following.
\begin{enumerate}
\item
For every Borel set $A \subset Y$, the function $X \ni x \mapsto \sigma_x(A)$ is Borel measurable.
\item
We have
\[
d\sigma(x,y) = d\sigma_x(y) d\mu_X(x) = d\sigma_y(x) d\mu_Y(y)
\]
in the sense that, for every nonnegative Borel measurable function $f$ on $X \times Y$,
\[
\int_{X \times Y} f(x,y)\,d\sigma(x,y)
= \int_X \int_Y f(x,y)\,d\sigma_x(y) d\mu_X(x)
= \int_Y \int_X f(x,y)\,d\sigma_y(x) d\mu_Y(y).
\]
\end{enumerate}
$\{\sigma_y\}_{y \in Y}$ also has similar properties.
We also call $\{\sigma_x\}_{x \in X}$ and $\{\sigma_y\}_{y \in Y}$ disintegrations of $\sigma$ with respect to $p_X$ and $p_Y$, respectively.

Let $X$, $Y$ and $Z$ be mm-spaces.
For $\sigma \in \Pi(\mu_X,\mu_Y)$ and $\tau \in \Pi(\mu_Y,\mu_Z)$, we first define a Borel measure $\tau\bullet\sigma$ on $X \times Y \times Z$ by
\[
d(\tau\bullet\sigma)(x,y,z) := d\sigma_y(x) d\tau_y(z) d\mu_Y(y).
\]
Let $p_{XZ} : X \times Y \times Z \to X \times Z$ denote the canonical projection.
We then define
\[
\tau\circ\sigma := (p_{XZ})_\bullet(\tau\bullet\sigma)
= \int_Y \sigma_y \otimes \tau_y \, d\mu_Y(y).
\]
We call $\tau\circ\sigma$ (and also $\tau\bullet\sigma$) the \emph{gluing of $\sigma$ and $\tau$}.
Then $\tau\circ\sigma \in \Pi(\mu_X,\mu_Z)$,
$(p_{XY})_\bullet(\tau\bullet\sigma) = \sigma$ and
$(p_{YZ})_\bullet(\tau\bullet\sigma) = \tau$ (see \cite{Vl:oldnew}*{Chapter 1}).

\section{Measured Coarse Equivalence}
\label{sec:mce}

In this section, we introduce (weakly) measured coarse equivalence between mm-spaces and establish some basic properties.

Let $X$, $Y$ and $Z$ be sets.
For subsets $S \subset X \times Y$, $T \subset Y \times Z$ and $A \subset X$, we define
\begin{align*}
& T \bullet S := (X \times T) \cap (S \times Z),\qquad
T \circ S := p_{XZ}(T \bullet S),\quad
\\
& S^{-1} := \{\,(y,x) \mid (x,y) \in S\,\},\quad
S[A] := p_Y((A \times Y) \cap S),
\end{align*}
where $p_Y : X \times Y \to Y$ and $p_{XZ} : X \times Y \times Z \to X \times Z$ are the canonical projections.
The operations $T \circ S$ and $S^{-1}$ generalize the composition and inverse of maps, respectively.
Note that, even if $X$, $Y$, and $Z$ are topological spaces, the sets $S[A]$ and $T\circ S$ need not be Borel, even when $A$, $S$, and $T$ are Borel.

\begin{defn}[Coarse equivalence] \label{defn:coarse-equiv}
Let $X$ and $Y$ be metric spaces and $S \subset X \times Y$ a subset.
We say that $S$ is \emph{boundedly dense}
if there exists $C > 0$ such that the $C$-neighborhoods of $p_X(S)$ and $p_Y(S)$ coincide with $X$ and $Y$, respectively.
We say that $S$ is \emph{bornologous} if for every $r > 0$ there exists $R = R(S,r) > 0$ such that
\[
(x_i,y_i) \in S\ (i=1,2),\ d_X(x_1,x_2) \le r  \implies d_Y(y_1,y_2) \le R.
\]
We say that $S$ is \emph{bi-bornologous} if $S$ and $S^{-1}$ are both bornologous.
A \emph{coarse correspondence between $X$ and $Y$} is a boundedly dense and bi-bornologous subset of $X\times Y$.
We say that $X$ and $Y$ are \emph{coarsely equivalent} and write
\[
X \simeq_\ce Y
\]
if there exists a coarse correspondence between $X$ and $Y$.
\end{defn}

Coarse equivalence is an equivalence relation.

\begin{rem}
In much of the literature, coarse equivalence between $X$ and $Y$
is defined by requiring that there exist bornologous maps $f : X \to Y$ and $g : Y \to X$ such that $g \circ f$ is close to $\mathrm{id}_X$ and $f \circ g$ is close to $\mathrm{id}_Y$.
It is well known that this definition is equivalent to the one given in Definition \ref{defn:coarse-equiv}.
\end{rem}

\begin{defn}[Measured coarse equivalence]
Let $X$ and $Y$ be mm-spaces.
A \emph{measured coarse correspondence between $X$ and $Y$} is a coupling $\sigma \in \Pi(\mu_X,\mu_Y)$ whose support is a coarse correspondence between $X$ and $Y$.
We say that $X$ and $Y$ are \emph{measured coarsely equivalent} and write
\[
X \simeq_\mce Y
\]
if there exists a measured coarse correspondence between them.
\end{defn}

Observe that for every coupling $\sigma \in \Pi(\mu_X,\mu_Y)$, the images $p_X(\supp\sigma)$ and $p_Y(\supp\sigma)$ are dense in $X$ and $Y$, respectively.  Hence, $\supp\sigma$ is always boundedly dense.

The following proposition is immediate from the definitions.

\begin{prop}
If two mm-spaces are measured coarsely equivalent, then they are coarsely equivalent.
\end{prop}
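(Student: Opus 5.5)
The proof is a direct unwinding of the definitions. Suppose $X \simeq_\mce Y$, and fix a measured coarse correspondence $\sigma \in \Pi(\mu_X,\mu_Y)$. I will take $S := \supp\sigma \subset X \times Y$ and check that $S$ is itself a coarse correspondence in the sense of Definition \ref{defn:coarse-equiv}. That would give $X \simeq_\ce Y$ with nothing further to construct.

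There are two things to verify. First, $S$ must be bi-bornologous. This holds by the very definition of a measured coarse correspondence, which requires $\supp\sigma$ to be a coarse correspondence and in particular bi-bornologous. Second, $S$ must be boundedly dense. This is also part of the definition. If one prefers not to rely on that, the observation recorded just before the proposition gives it independently.

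For completeness, here is how I would justify that observation. Suppose $p_X(S)$ were not dense in $X$. Then some nonempty open set $U \subset X$ would be disjoint from $p_X(S)$. Hence $U \times Y$ would be an open set disjoint from $S$, so $\sigma(U \times Y) = 0$. But $\sigma(U \times Y) = \mu_X(U)$, and $\mu_X(U) > 0$ because $\mu_X$ has full support, a contradiction. The same argument applies to $p_Y(S)$. Since a dense set has $C$-neighborhood equal to the whole space for every $C>0$, the set $S$ is boundedly dense.

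The only point needing any care is this density step. It uses the identity $\sigma(X\setminus \supp\sigma)=0$, which requires $X \times Y$ to be separable. That is guaranteed because both spaces are complete separable metric spaces. No step here is a genuine obstacle.
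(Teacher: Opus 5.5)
Your proof is correct and is exactly the paper's argument, which simply notes the statement is immediate from the definitions because $\supp\sigma$ is by definition a coarse correspondence. Your extra justification of bounded density is also correct but not needed; it repeats the observation the paper records right after defining measured coarse equivalence.
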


To prove that measured coarse equivalence is an equivalence relation, we first prove the following.

\begin{lem} \label{lem:supp-tau-sigma}
Let $X$, $Y$ and $Z$ be mm-spaces, and let $\sigma \in \Pi(\mu_X,\mu_Y)$, $\tau \in \Pi(\mu_Y,\mu_Z)$.
Then we have
\[
\supp (\tau \circ \sigma) \subset \overline{ \supp \tau \circ \supp \sigma }.
\]
\end{lem}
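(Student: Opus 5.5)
The plan is to show that the complement of $\overline{\supp\tau\circ\supp\sigma}$ is $(\tau\circ\sigma)$-null. Since $X\times Z$ is separable, that complement is a countable union of open boxes $U\times W$, with $U\subset X$ and $W\subset Z$ open, each disjoint from $\supp\tau\circ\supp\sigma$. So it suffices to prove $(\tau\circ\sigma)(U\times W)=0$ for each such box.

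By definition of the gluing,
\[
(\tau\circ\sigma)(U\times W)=(\tau\bullet\sigma)(U\times Y\times W)=\int_Y \sigma_y(U)\,\tau_y(W)\,d\mu_Y(y).
\]
I will split $Y$ using the Borel sets
\[
A:=\{\,y\in Y \mid \sigma_y(U)>0\,\},\qquad B:=\{\,y\in Y\mid \tau_y(W)>0\,\}.
\]
The integral vanishes as soon as $\mu_Y(A\cap B)=0$.

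To relate these sets to the supports, I use the disintegration identities. We have
\[
\sigma\bigl((U\times Y)\setminus\supp\sigma\bigr)=0,
\]
and $\sigma(E)=\int_Y\sigma_y(\{x\mid (x,y)\in E\})\,d\mu_Y(y)$. Hence for $\mu_Y$-a.e.\ $y$ we get $\sigma_y(\{x\in U\mid (x,y)\notin\supp\sigma\})=0$. So for a.e.\ $y\in A$ there is some $x\in U$ with $(x,y)\in\supp\sigma$, that is, $y\in\supp\sigma[U]$. In the same way, for a.e.\ $y\in B$ there is $z\in W$ with $(y,z)\in\supp\tau$.

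Now suppose $y\in A\cap B$ is a point where both properties hold. Then $(x,z)\in\supp\tau\circ\supp\sigma$ and $(x,z)\in U\times W$, which contradicts the choice of the box. Therefore $\mu_Y(A\cap B)=0$, the integral is zero, and summing over the countably many boxes gives the inclusion.

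The main technical point is measurability, since the paper notes that $\supp\sigma[U]$ need not be Borel. I avoid using it as a set and instead work with the Borel sets $A$ and $B$ together with the null-set statements derived from the disintegration. The remaining steps are only the a.e.\ bookkeeping and the countable-basis reduction.
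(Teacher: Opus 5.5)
Your argument is correct, but it takes a different route from the paper. The paper does not localize to boxes or argue fiber by fiber. Writing $S=\supp\sigma$ and $T=\supp\tau$, it observes that $p_{XZ}^{-1}(\overline{T\circ S})\supset (X\times T)\cap(S\times Z)$, so $p_{XZ}^{-1}\bigl((\overline{T\circ S})^c\bigr)\subset (X\times T^c)\cup(S^c\times Z)$. It then uses only the marginal identities $(p_{XY})_\bullet(\tau\bullet\sigma)=\sigma$ and $(p_{YZ})_\bullet(\tau\bullet\sigma)=\tau$ to get $(\tau\circ\sigma)\bigl((\overline{T\circ S})^c\bigr)\le \tau(T^c)+\sigma(S^c)=0$.

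The Borel issue you flag is handled there simply by passing to the closure, which is Borel. So no countable-basis reduction and no a.e.\ bookkeeping in $y$ are needed, and the estimate holds for arbitrary Borel $S,T$.

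Your proof instead re-derives, box by box and fiber by fiber, the fact that $\tau\bullet\sigma$ is concentrated on $(X\times T)\cap(S\times Z)$. You do this via the explicit formula $\int_Y\sigma_y(U)\tau_y(W)\,d\mu_Y(y)$ and the disintegration identities. This is more concrete, and it makes visible why $\tau\circ\sigma$ can only charge pairs $(x,z)$ linked through a common $y$. The cost is the Lindel\"of step and the null-set handling.

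Each of your steps checks out: Borel measurability of $A$, $B$ and of the sections; $\sigma((\supp\sigma)^c)=0$ by separability; and the conclusion $\mu_Y(A\cap B)=0$.
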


\begin{proof}
For $A \subset X \times Z$, we set
\[
\iota_Y(A) := \{\,(x,y,z) \in X \times Y \times Z \mid (x,z) \in A,\ y \in Y\,\}.
\]
For any Borel subsets $S \subset X \times Y$ and $T \subset Y \times Z$, the definition of $T \circ S$ yields 
$\iota_Y(\overline{T \circ S}) \supset \iota_Y(T \circ S)
\supset (X \times T) \cap (S \times Z)$.
Hence, $\iota_Y((\overline{T \circ S})^c) = (\iota_Y(\overline{T \circ S}))^c
\subset ((X \times T) \cap (S \times Z))^c
= (X \times T^c) \cup (S^c \times Z)$.  Therefore,
\begin{align*}
&\tau \circ \sigma((\overline{T \circ S})^c)
= \tau \bullet \sigma(\iota_Y((\overline{T \circ S})^c))
\le \tau \bullet \sigma((X \times T^c) \cup (S^c \times Z)) \notag\\
&\le \tau \bullet \sigma(X \times T^c)
+ \tau \bullet \sigma(S^c \times Z)
= \tau(T^c) + \sigma(S^c). \notag
\end{align*}
Setting $S := \supp\sigma$ and $T := \supp\tau$, we obtain
$\tau \circ \sigma((\overline{T \circ S})^c) = 0$
because $\sigma(S^c) = \tau(T^c) = 0$.
Thus, $\supp(\tau\circ\sigma) \subset \overline{T \circ S}$.
This completes the proof.
\end{proof}

We omit the proof of the following, as it is straightforward.

\begin{lem} \label{lem:comp-bornologous}
Let $X$, $Y$, $Z$ be metric spaces.
If two subsets $S \subset X \times Y$ and $T \subset Y \times Z$ are both bi-bornologous, then $T \circ S$ is bi-bornologous.
\end{lem}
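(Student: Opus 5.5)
The plan is to verify bornologousness of $T \circ S$ directly from the definitions and then obtain bornologousness of $(T\circ S)^{-1}$ by symmetry. First I would record the elementary identity $(T \circ S)^{-1} = S^{-1} \circ T^{-1}$, viewed as a subset of $Z \times X$. This is immediate from the definitions: $(z,x) \in (T\circ S)^{-1}$ if and only if there is $y \in Y$ with $(x,y) \in S$ and $(y,z) \in T$, which is exactly the condition for $(z,x) \in S^{-1} \circ T^{-1}$. Since $S^{-1}$ and $T^{-1}$ are again bi-bornologous, it suffices to prove the following claim: if $S$ and $T$ are bornologous, then $T \circ S$ is bornologous.

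For the claim, fix $r > 0$. Let $(x_1,z_1), (x_2,z_2) \in T \circ S$ with $d_X(x_1,x_2) \le r$. By the definition of $T \circ S = p_{XZ}((X \times T) \cap (S \times Z))$, there are points $y_1, y_2 \in Y$ with $(x_i,y_i) \in S$ and $(y_i,z_i) \in T$ for $i = 1,2$. Bornologousness of $S$ gives $d_Y(y_1,y_2) \le R(S,r)$. Bornologousness of $T$, applied with the radius $R(S,r)$, then gives $d_Z(z_1,z_2) \le R(T, R(S,r))$. Hence $R(T\circ S, r) := R(T, R(S,r))$ works, and $T\circ S$ is bornologous.

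Applying the claim to the pair $T^{-1}$, $S^{-1}$ shows that $(T\circ S)^{-1} = S^{-1}\circ T^{-1}$ is bornologous. Thus $T \circ S$ is bi-bornologous.

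There is no real obstacle here. The only points needing care are the bookkeeping of the inverse-composition identity and the observation that the intermediate points $y_i$ need not be unique, which does not matter because bornologousness holds for every choice of witnesses. No measurability issues arise, since the statement is purely set-theoretic and metric, so the remark that $T\circ S$ need not be Borel is irrelevant.
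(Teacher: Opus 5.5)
Your proof is correct. The paper omits the proof as straightforward, and your argument is the routine one it has in mind: the witness-chasing bound $R(T\circ S,r):=R(T,R(S,r))$, combined with the identity $(T\circ S)^{-1}=S^{-1}\circ T^{-1}$ to handle the inverse.
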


Combining Lemmas \ref{lem:supp-tau-sigma} and \ref{lem:comp-bornologous}, we prove the following.

\begin{lem} \label{lem:comp-mcecorr}
Let $X$, $Y$ and $Z$ be mm-spaces.
If $\sigma$ is a measured coarse correspondence between $X$ and $Y$ and $\tau$ is a measured coarse correspondence between $Y$ and $Z$, then $\tau\circ\sigma$ is a measured coarse correspondence between $X$ and $Z$.
\end{lem}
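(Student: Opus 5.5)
We first note that $\tau\circ\sigma\in\Pi(\mu_X,\mu_Z)$, as recalled in Section~\ref{sec:prelim}. It therefore suffices to show that $\supp(\tau\circ\sigma)$ is a coarse correspondence between $X$ and $Z$. By the observation following the definition of measured coarse equivalence, the support of any coupling is automatically boundedly dense. So the only thing left is to show that $\supp(\tau\circ\sigma)$ is bi-bornologous.

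Set $S:=\supp\sigma$ and $T:=\supp\tau$; both are bi-bornologous by hypothesis. By Lemma~\ref{lem:comp-bornologous}, $T\circ S$ is bi-bornologous. By Lemma~\ref{lem:supp-tau-sigma}, $\supp(\tau\circ\sigma)\subset\overline{T\circ S}$. We then need two simple facts.

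\begin{enumerate}
\item A subset of a bi-bornologous set is bi-bornologous. This is immediate from the definition, since the defining implication only quantifies over pairs of points of the set.
\item The closure of a bornologous set is bornologous. Let $(x_i,z_i)\in\overline{T\circ S}$ with $d_X(x_1,x_2)\le r$. Choose $(x_i',z_i')\in T\circ S$ with $d_X(x_i,x_i')<1$ and $d_Z(z_i,z_i')<1$. Then $d_X(x_1',x_2')\le r+2$, so $d_Z(z_1',z_2')\le R(T\circ S,r+2)$, and hence $d_Z(z_1,z_2)\le R(T\circ S,r+2)+2$. The same argument applied to the inverse shows that $\overline{T\circ S}$ is bi-bornologous.
\end{enumerate}

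Combining these, $\supp(\tau\circ\sigma)$ is a subset of the bi-bornologous set $\overline{T\circ S}$, hence bi-bornologous, and together with bounded density it is a coarse correspondence. The only point requiring care is that the closure in Lemma~\ref{lem:supp-tau-sigma} could a priori spoil bornology, and the approximation argument in the second fact handles this.
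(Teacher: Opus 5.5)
Your proposal is correct and follows the paper's proof essentially step for step. The paper uses the same chain: $\tau\circ\sigma$ is a coupling, Lemma~\ref{lem:comp-bornologous} gives that $\supp\tau\circ\supp\sigma$ is bi-bornologous, passing to the closure preserves this, and Lemma~\ref{lem:supp-tau-sigma} finishes. You additionally write out the approximation argument for the closure and the automatic bounded density, both of which the paper leaves implicit.
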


\begin{proof}
$\tau\circ\sigma$ is a coupling between $\mu_X$ and $\mu_Z$.
Therefore, it suffices to prove that $\supp(\tau \circ \sigma)$ is bi-bornologous.
By Lemma \ref{lem:comp-bornologous}, $\supp\tau \circ \supp\sigma$ is bi-bornologous and so is its closure.
Hence, by Lemma \ref{lem:supp-tau-sigma}, $\supp(\tau\circ\sigma)$ is bi-bornologous.
This completes the proof.
\end{proof}

\begin{prop} \label{prop:m-coarse-equiv}
Measured coarse equivalence between mm-spaces is an equivalence relation.
\end{prop}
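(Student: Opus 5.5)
We check reflexivity, symmetry and transitivity in turn; transitivity is already essentially Lemma \ref{lem:comp-mcecorr}, so the only real work is reflexivity.

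\emph{Reflexivity.} For an mm-space $X$, let $\Delta : X \to X\times X$, $\Delta(x) := (x,x)$, and take $\sigma := \Delta_\bullet\mu_X$. Both projections of $\sigma$ are $\mu_X$, so $\sigma \in \Pi(\mu_X,\mu_X)$. Its support is contained in the diagonal $\{(x,x) \mid x\in X\}$, which is closed since $X$ is Hausdorff.

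We claim the support is exactly the diagonal. For $x \in X$ and $r>0$, the set $U := B(x,r)\times B(x,r)$ is an open neighborhood of $(x,x)$. We have $\Delta^{-1}(U) = B(x,r)$, so
\[
\sigma(U) = \mu_X(B(x,r)) > 0
\]
because $\mu_X$ has full support. In fact we only need the support to be a subset of the diagonal. Any subset of the diagonal is bi-bornologous with $R(r)=r$: if $(x_i,x_i)$ lie in it and $d(x_1,x_2)\le r$, the second coordinates are also within distance $r$. Bounded density holds automatically, by the observation made after the definition of measured coarse equivalence. Hence $\sigma$ is a measured coarse correspondence.

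\emph{Symmetry.} Let $\sigma$ be a measured coarse correspondence between $X$ and $Y$. Let $s : X\times Y\to Y\times X$ be the swap homeomorphism and put $\sigma^{-1} := s_\bullet\sigma$. Then $\sigma^{-1} \in \Pi(\mu_Y,\mu_X)$. Since $s$ is a homeomorphism, $\supp(s_\bullet\sigma) = s(\supp\sigma) = (\supp\sigma)^{-1}$. Bi-bornologousness is symmetric under taking $S^{-1}$ by definition, so $\sigma^{-1}$ is a measured coarse correspondence between $Y$ and $X$.

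\emph{Transitivity.} If $\sigma$ is a measured coarse correspondence between $X$ and $Y$, and $\tau$ is one between $Y$ and $Z$, then Lemma \ref{lem:comp-mcecorr} says that the gluing $\tau\circ\sigma$ is a measured coarse correspondence between $X$ and $Z$.

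The main technical point of transitivity, that the support of a glued coupling is controlled, was already handled in Lemma \ref{lem:supp-tau-sigma}. So there is no real obstacle left. The only detail to check is that $\supp(s_\bullet\sigma)=s(\supp\sigma)$, which holds for any homeomorphism.
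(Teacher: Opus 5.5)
Your proposal is correct and follows essentially the same route as the paper: the diagonal push-forward $(\mathrm{id}_X,\mathrm{id}_X)_\bullet\mu_X$ for reflexivity, the push-forward under the transpose map for symmetry, and Lemma~\ref{lem:comp-mcecorr} for transitivity. Your extra checks (that the support lies in the diagonal, and that $\supp(s_\bullet\sigma)=s(\supp\sigma)$ for the swap homeomorphism $s$) fill in details the paper leaves implicit.
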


\begin{proof}
We first prove reflexivity.
The push-forward of $\mu_X$ by the diagonal map $X \ni x \mapsto (x,x) \in X \times X$ is a measured coarse correspondence.
Hence, $X \simeq_\mce X$.

To prove symmetry, we assume that $X \simeq_\mce Y$.
Then there exists a measured coarse correspondence $\sigma$ between them.
The push-forward of $\sigma$ by the transpose map $X \times Y \ni (x,y) \mapsto (y,x) \in Y \times X$ is a measured coarse correspondence between $Y$ and $X$.
Therefore, $Y \simeq_\mce X$.

Transitivity follows from Lemma \ref{lem:comp-mcecorr}.

This completes the proof.
\end{proof}

\begin{defn}[Equivalence between (measured) coarse correspondences]
For two metric spaces $X$ and $Y$, we say that two coarse correspondences $S \subset X \times Y$ and $T \subset X \times Y$ are \emph{equivalent} if $S \cup T$ is a coarse correspondence (or equivalently, $S \cup T$ is bi-bornologous).

For two mm-spaces $X$ and $Y$, we say that two measured coarse correspondences $\sigma,\tau \in \Pi(\mu_X,\mu_Y)$ are \emph{equivalent} if $\supp\sigma$ and $\supp\tau$ are equivalent as coarse correspondences. 
\end{defn}

The proof of the following proposition is straightforward and will be omitted.

\begin{prop}
\begin{enumerate}
\item
Equivalence between coarse correspondences is an equivalence relation.
\item
Equivalence between measured coarse correspondences is an equivalence relation.
\end{enumerate}
\end{prop}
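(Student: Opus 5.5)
Part (2) reduces to part (1): two measured coarse correspondences are equivalent exactly when their supports are equivalent as coarse correspondences, so reflexivity, symmetry and transitivity for measured coarse correspondences follow from the same properties for coarse correspondences. The work is therefore in (1). Throughout, we use the stated equivalence that, for $S \cup T$ with $S,T$ coarse correspondences, being a coarse correspondence is the same as being bi-bornologous. Indeed, $S\cup T$ is automatically boundedly dense, since it contains $S$.

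\emph{Reflexivity} holds because $S \cup S = S$.

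\emph{Symmetry} holds because $S\cup T = T \cup S$.

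\emph{Transitivity} is the main step. Let $S,T,U$ be coarse correspondences with $S\cup T$ and $T\cup U$ bi-bornologous; we must show $S\cup U$ is bi-bornologous. By the symmetry of the definition under passing to inverses (note $(S\cup U)^{-1}=S^{-1}\cup U^{-1}$, and the hypotheses are symmetric in the same way), it suffices to prove that $S\cup U$ is bornologous. The case of two points both in $S$ or both in $U$ is immediate, so the real content is the mixed case. Fix $r>0$, and take $(x_1,y_1)\in S$ and $(x_2,y_2)\in U$ with $d_X(x_1,x_2)\le r$. Since $T$ is boundedly dense, there is $C>0$ and a point $(x',y')\in T$ with $d_X(x_1,x')\le C$. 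Then:
\begin{itemize}
\item Since $(x_1,y_1),(x',y')\in S\cup T$ and $d_X(x_1,x')\le C$, bornologousness of $S\cup T$ gives $d_Y(y_1,y')\le R(S\cup T,C)$.
\item Since $(x',y'),(x_2,y_2)\in T\cup U$ and $d_X(x',x_2)\le C+r$, bornologousness of $T\cup U$ gives $d_Y(y',y_2)\le R(T\cup U,C+r)$.
\end{itemize}
By the triangle inequality, $d_Y(y_1,y_2)$ is bounded by a constant depending only on $r$. Combined with the pure cases, which are controlled by $R(S,r)$ and $R(U,r)$, this shows $S\cup U$ is bornologous.

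The only mild obstacle is remembering to pass through an auxiliary point of $T$. This requires the bounded density of $T$, specifically that $p_X(T)$ is $C$-dense in $X$. For the inverse direction, the corresponding requirement is that $p_Y(T)$ is $C$-dense in $Y$, which is also part of the definition of a coarse correspondence.
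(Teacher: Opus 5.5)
Your proof is correct. The paper omits this proof as straightforward, and your argument is exactly the natural one being omitted: you reduce (2) to (1) via supports, and you handle the mixed case of transitivity by passing through a nearby point of $T$, supplied by the bounded density of $p_X(T)$ (respectively $p_Y(T)$ for the inverse direction). One cosmetic point: if the $C$-neighborhood is read as $\{x : d_X(x,p_X(T))\le C\}$, you only get $x'$ with $d_X(x_1,x')\le C+1$ rather than $\le C$, which changes nothing in the argument.
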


We next introduce a weaker variant of measured coarse equivalence.

For two Borel measures $\mu$ and $\nu$ on a topological space $X$,
we write $\mu \asymp \nu$ if there exists a constant $C \ge 1$ such that 
\[
C^{-1}\mu(A) \le \nu(A) \le C\mu(A)
\]
for every Borel set $A \subset X$.
The relation $\asymp$ is an equivalence relation.

\begin{defn}[Weakly measured coarse equivalence]
\label{defn:w-mce}
We say that two mm-spaces $X$ and $Y$ are \emph{weakly measured coarsely equivalent} and write
\[
X \simeq_\wmce Y
\]
if there exist measures $\nu_X \in \cM_+(X)$ and $\nu_Y \in \cM_+(Y)$ such that $\nu_X \asymp \mu_X$, $\nu_Y \asymp \mu_Y$ and $(X,\nu_X) \simeq_\mce (Y,\nu_Y)$.
\end{defn}

\begin{prop} \label{prop:wmce-equiv}
Weakly measured coarse equivalence is an equivalence relation.
\end{prop}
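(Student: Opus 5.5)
Reflexivity and symmetry are immediate, so the plan spends its effort on transitivity. For reflexivity we take $\nu_X := \mu_X$ and use the diagonal coupling from the proof of Proposition \ref{prop:m-coarse-equiv}. For symmetry, if $\nu_X \asymp \mu_X$, $\nu_Y \asymp \mu_Y$ and $(X,\nu_X) \simeq_\mce (Y,\nu_Y)$, then the symmetry part of Proposition \ref{prop:m-coarse-equiv} gives $(Y,\nu_Y) \simeq_\mce (X,\nu_X)$.

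For transitivity, suppose we have $\nu_X \asymp \mu_X$ and $\nu_Y \asymp \mu_Y$ with a measured coarse correspondence $\sigma \in \Pi(\nu_X,\nu_Y)$. Suppose also we have $\nu'_Y \asymp \mu_Y$ and $\nu_Z \asymp \mu_Z$ with a measured coarse correspondence $\tau \in \Pi(\nu'_Y,\nu_Z)$. The difficulty is that $\nu_Y \neq \nu'_Y$ in general, so we cannot glue $\sigma$ and $\tau$ directly. We only know $\nu_Y \asymp \nu'_Y$. By the Radon--Nikodym theorem (all measures are $\sigma$-finite), $\nu'_Y = h\,\nu_Y$ for a Borel density $h$ with $C^{-1} \le h \le C$ $\nu_Y$-a.e.; we modify $h$ on a null set so that these bounds hold everywhere.

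The key step is to reweight $\sigma$ by $h$ on the $Y$-coordinate. Define $\sigma' := h(y)\,\sigma$ on $X\times Y$. Its $Y$-marginal is $h\,\nu_Y = \nu'_Y$. Its $X$-marginal is
\[
\nu'_X(A) := \int_{A\times Y} h(y)\,d\sigma(x,y),
\]
and this satisfies $C^{-1}\nu_X(A) \le \nu'_X(A) \le C\nu_X(A)$, so $\nu'_X \asymp \nu_X \asymp \mu_X$. Since $\asymp$ is transitive, $\nu'_X \asymp \mu_X$, and in particular $\nu'_X \in \cM_+(X)$ is fully supported. Because $h$ is bounded between positive constants, $\sigma'$ and $\sigma$ are mutually absolutely continuous, so $\supp\sigma' = \supp\sigma$. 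Hence $\sigma'$ is a measured coarse correspondence between $(X,\nu'_X)$ and $(Y,\nu'_Y)$.

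Now $\sigma'$ and $\tau$ share the middle marginal $\nu'_Y$. Lemma \ref{lem:comp-mcecorr}, applied to the mm-spaces $(X,\nu'_X)$, $(Y,\nu'_Y)$, $(Z,\nu_Z)$, shows that $\tau\circ\sigma'$ is a measured coarse correspondence. Therefore $(X,\nu'_X) \simeq_\mce (Z,\nu_Z)$ with $\nu'_X \asymp \mu_X$ and $\nu_Z \asymp \mu_Z$, which gives $X \simeq_\wmce Z$.

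The only point needing care is the reweighting step, specifically checking that $\sigma'$ has the right marginals and that its support is unchanged. Both follow from the two-sided bound on $h$.
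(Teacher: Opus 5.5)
Your proof is correct and follows essentially the same route as the paper. The paper reweights both $\sigma$ and $\tau$ to the common middle marginal $\mu_Y$ via $\hat\sigma := \int_Y \tilde\sigma_y\,d\mu_Y(y)$, which is just $\sigma$ multiplied by the density $d\mu_Y/d\nu_Y$ in the $Y$-variable. You instead reweight only $\sigma$, by $h = d\nu'_Y/d\nu_Y$, to match $\nu'_Y$, and both arguments then conclude with Lemma \ref{lem:comp-mcecorr}.
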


\begin{proof}
Reflexivity and symmetry are immediate.

To prove transitivity, we assume that $X \simeq_\wmce Y$ and $Y \simeq_\wmce Z$.
Then there exist measures $\nu_X \in \cM_+(X)$, $\nu_Y, \nu'_Y \in \cM_+(Y)$ and $\nu_Z \in \cM_+(Z)$ such that
$\mu_X \asymp \nu_X$, $\nu_Y \asymp \mu_Y \asymp \nu'_Y$ and $\mu_Z \asymp \nu_Z$,
and measured coarse correspondences $\sigma \in \Pi(\nu_X,\nu_Y)$ and $\tau \in \Pi(\nu'_Y,\nu_Z)$.
Let $\hat{\sigma} := \int_Y \tilde{\sigma}_y \, d\mu_Y(y)$.
Since $\sigma = \int_Y \tilde{\sigma}_y \, d\nu_Y(y)$,
we have $\hat{\sigma} \asymp \sigma$ because $\mu_Y\asymp\nu_Y$.
In particular, $\supp\hat{\sigma} = \supp\sigma$ and
$(p_X)_\bullet\hat{\sigma} \asymp (p_X)_\bullet\sigma = \nu_X$.
Since $(p_Y)_\bullet\hat{\sigma} = \mu_Y$,
$\hat{\sigma}$ is a measured coarse correspondence between $(X,(p_X)_\bullet\hat{\sigma})$ and $Y$.
In the same way, letting $\hat{\tau} := \int_Y \tilde{\tau}_y \, d\mu_Y(y)$
we have $(p_Z)_\bullet\hat{\tau} \asymp \nu_Z$ and so
$\hat{\tau}$ is a measured coarse correspondence between $Y$ and $(Z,(p_Z)_\bullet\hat{\tau})$.
Applying Lemma \ref{lem:comp-mcecorr}, we see that $\hat{\tau}\circ\hat{\sigma}$ is a measured coarse correspondence between $(X,(p_X)_\bullet\hat{\sigma})$ and $(Z,(p_Z)_\bullet\hat{\tau})$.
Therefore, $X \simeq_\wmce Z$.
\end{proof}

Note that the measure $\hat{\sigma}$ constructed in the proof of Proposition \ref{prop:wmce-equiv} is called a lift of $\mu_Y$ in \cite{GKMS}.

The proof of the following proposition is easy and omitted.

\begin{prop} \label{prop:bdd-1pt}
An mm-space is bounded {\rm(}i.e., of finite diameter{\rm)} if and only if it is weakly measured coarsely equivalent to a one-point mm-space.
\end{prop}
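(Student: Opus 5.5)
We prove the two implications separately. Let $P = \{p\}$ be a one-point mm-space with $\mu_P = c\,\delta_p$ for some $c>0$.

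\emph{Bounded implies $X \simeq_\wmce P$.} Let $X$ be bounded. Since $\mu_X$ is boundedly finite, $\mu_X(X) =: m$ is finite, and $m>0$ because $\mu_X$ has full support (we assume $X \neq \emptyset$). Take $\nu_X := \mu_X$ and $\nu_P := m\,\delta_p$; then $\nu_P \asymp \mu_P$ with constant $\max\{m/c, c/m\}$. The product-type coupling $\sigma := \mu_X \otimes \delta_p \in \Pi(\nu_X,\nu_P)$ has support $X \times \{p\}$. This support is bornologous: any $y_1,y_2$ equal $p$, so $R=0$ works. Its inverse is also bornologous: $d_X(x_1,x_2)\le \operatorname{diam} X <\infty$ for all $x_1,x_2$. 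Bounded density holds, as observed after the definition of measured coarse equivalence. Hence $X \simeq_\wmce P$.

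\emph{$X \simeq_\wmce P$ implies bounded.} Suppose there are $\nu_X \asymp \mu_X$, $\nu_P \asymp \mu_P$, and a measured coarse correspondence $\sigma \in \Pi(\nu_X,\nu_P)$. The support of $\sigma$ lies in $X \times \{p\}$. Because $(p_X)_\bullet\sigma = \nu_X$ and $\nu_X$ has full support (being comparable to $\mu_X$), $p_X(\supp\sigma)$ is dense in $X$. Since $\supp\sigma$ is closed and $P$ is a single point, we get $\supp\sigma = X\times\{p\}$. Apply the bornologous property of $(\supp\sigma)^{-1}$ with $r=0$: all pairs $(p,x_1),(p,x_2)$ lie in it with $d_P(p,p)=0$, so $d_X(x_1,x_2)\le R$ for all $x_1,x_2$. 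Thus $X$ is bounded.

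Alternatively, one could skip the step identifying the support. Coarse equivalence is implied by $\simeq_\mce$ (the proposition stated after the definition), and the measures $\nu$ share the same metric. Coarse equivalence to a point forces boundedness, since by bounded density every point of $X$ lies within $C$ of $p_X(S)$, and $p_X(S)$ has diameter at most $R(S^{-1},0)$.

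The proof is entirely routine. The only points needing a little care are that $\mu_X(X)$ is finite and nonzero, and the closedness/density argument for the support; the latter is avoidable via the coarse-equivalence route.
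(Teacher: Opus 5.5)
Your proof is correct. The paper omits this proof as easy, and your argument is exactly the routine verification intended: the product coupling $\mu_X\otimes\delta_p$ with the point mass rescaled to $\mu_X(X)$ in one direction, and the bornologous property of $(\supp\sigma)^{-1}$ (or the coarse-equivalence route) in the other. One cosmetic point: bornologousness is only defined for $r>0$, so apply it with, say, $r=1$ rather than $r=0$.
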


More generally, weakly measured coarse equivalence ignores bounded perturbations, as the following proposition shows.

\begin{prop}
Let $X$ be an mm-space, and let $Y \subset X$ be a nonempty subset such that $Y^c$ is a bounded open set.  Assume moreover that $\mu_X|_Y$ is fully supported on $Y$.  Then, $(Y,d_X|_{Y \times Y},\mu_X|_Y) \simeq_\wmce X$.
\end{prop}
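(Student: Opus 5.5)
The plan is to write down an explicit coupling. The diagonal will handle $Y$, and the bounded piece $B:=Y^c$ will be transported onto a bounded region of $Y$. I will keep $\nu_X:=\mu_X$ and only modify the measure on $Y$ by a uniformly comparable factor.

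First, $(Y,d_X|_{Y\times Y},\mu_X|_Y)$ is an mm-space. It is closed in $X$ since $Y^c$ is open, hence complete and separable. Its measure is boundedly finite, and it is fully supported by hypothesis. Next I choose a bounded Borel set $K\subset Y$ with $0<\mu_X(K)<\infty$. Take a point $y_0\in Y$ and set $K:=Y\cap U_1(y_0)$. This set is relatively open in $Y$ and nonempty, so $\mu_X(K)>0$ by full support of $\mu_X|_Y$, and $\mu_X(K)<\infty$ by bounded finiteness. Also $\mu_X(B)<\infty$ since $B$ is bounded. Let $\Delta:Y\to Y\times Y\subset X\times Y$ be the diagonal map. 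Define
\[
\sigma:=\Delta_\bullet(\mu_X|_Y)+\frac{1}{\mu_X(K)}\,\mu_X|_B\otimes\mu_X|_K ,
\]
a Borel measure on $X\times Y$.

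I then compute the marginals. We have $(p_X)_\bullet\sigma=\mu_X|_Y+\mu_X|_B=\mu_X$. On the other side,
\[
(p_Y)_\bullet\sigma=\mu_X|_Y+\frac{\mu_X(B)}{\mu_X(K)}\,\mu_X|_K=:\nu_Y ,
\]
and this satisfies $\mu_X|_Y\le\nu_Y\le\bigl(1+\mu_X(B)/\mu_X(K)\bigr)\mu_X|_Y$. Hence $\nu_Y\asymp\mu_X|_Y$, $\nu_Y\in\cM_+(Y)$, and $\nu_Y$ has full support. So $\sigma\in\Pi(\mu_X,\nu_Y)$. It remains to check that $\supp\sigma$ is bi-bornologous; bounded density is automatic, as observed after the definition of measured coarse correspondence.

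For the bi-bornologous property, note that $\supp\sigma\subset\Delta(Y)\cup(\overline B\times\overline K)$. Let $D:=\operatorname{diam}(\overline B\cup\overline K)$, which is finite. Take $(x_i,y_i)\in\supp\sigma$, $i=1,2$. For each $i$, either $x_i=y_i$ or both $x_i,y_i$ lie in $\overline B\cup\overline K$; in either case $d_X(x_i,y_i)\le D$. Consequently, $d_X(x_1,x_2)\le r$ implies $d_X(y_1,y_2)\le r+2D$, and the symmetric inequality gives the bound for the inverse. Thus $\supp\sigma$ is a coarse correspondence and $(X,\mu_X)\simeq_\mce(Y,\nu_Y)$. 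This gives $Y\simeq_\wmce X$ by Definition \ref{defn:w-mce}, using Proposition \ref{prop:wmce-equiv} for symmetry.

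The only delicate point is choosing $K$ with positive finite measure, which is exactly where the full-support hypothesis on $\mu_X|_Y$ is used. Everything else is bookkeeping.
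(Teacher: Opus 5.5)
Your proof is correct and uses the same construction as the paper: the diagonal coupling on $Y$ plus a product coupling of $\mu_X|_{Y^c}$ with a bounded piece of $Y$. The differences are minor. Normalizing by $\mu_X(K)$ keeps the $X$-marginal exactly $\mu_X$. Taking $K$ relatively open in $Y$, so that $\mu_X(K)>0$ follows from the full-support hypothesis on $\mu_X|_Y$, lets you skip the paper's separate treatment of bounded $X$ via Proposition~\ref{prop:bdd-1pt}, which the paper needs because the interior of $Y$ in $X$ may then be empty.
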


\begin{proof}
If $X$ is bounded, the proposition follows from Proposition \ref{prop:bdd-1pt}.
Assume that $X$ is unbounded.
Since $Y^c$ is bounded, there exists a nonempty bounded open set $O$ contained in the interior of $Y$.
Let
\[
\sigma := (\id_Y,\id_Y)_\bullet(\mu_X|_Y) + \mu_X|_{Y^c} \otimes \mu_X|_O.
\]
Since
$\supp\sigma = \{\,(y,y) \mid y \in Y\,\} \cup \overline{(Y^c \times O)}$
is a coarse correspondence between $X$ and $Y$,
$\sigma$ is a measured coarse correspondence between
$(X,(p_X)_\bullet\sigma)$ and $(Y,(p_Y)_\bullet\sigma)$.
Moreover,
\[
(p_X)_\bullet\sigma = \mu_X|_Y + \mu_X(O) \mu_X|_{Y^c} \asymp \mu_X,
\quad
(p_Y)_\bullet\sigma = \mu_X|_Y + \mu_X(Y^c) \mu_X|_O \asymp \mu_X|_Y.
\]
Therefore, $X \simeq_\wmce (Y,d_X|_{Y\times Y},\mu_X|_Y)$.
\end{proof}

\section{Weighted $\ell^\infty$ Homology of Discrete Spaces}
\label{sec:L-infty-homology}

Let $X$ be a metric space.
\begin{defn}[Boundedly finite signed measure]
A \emph{boundedly finite signed measure} $\nu$ on $X$ is, by definition, a difference $\nu = \sigma - \tau$ of two boundedly finite measures $\sigma, \tau \in \cM_+(X)$.
For a Borel set $A \subset X$, the value $\nu(A)$ is defined whenever at least one of  $\sigma(A)$ and $\tau(A)$ is finite.

Denote by $\cM(X)$ the set of all boundedly finite signed measures on $X$.
\end{defn}

This notion differs slightly from the usual notion of a signed measure defined on a $\sigma$-algebra.
For a boundedly finite signed measure $\nu$ on $X$,
the value $\nu(A)$ is defined for every bounded Borel set $A \subset X$.

Let $\nu \in \cM(X)$ be a boundedly finite signed measure.
Since the restriction of $\nu$ to every bounded open set in $X$ is a signed measure in the ordinary sense, the local Jordan decompositions are compatible on overlaps by uniqueness and therefore glue together to yield a unique pair of mutually singular Borel measures $\nu^+, \nu^- \in \cM_+(X)$ such that $\nu = \nu^+ - \nu^-$.
We define $|\nu| := \nu^+ + \nu^-$ and $\supp\nu := \supp |\nu|$.

The set $\cM(X)$ naturally forms a vector space over $\R$.

For integers $n \ge 0$, $0 \le i_0 < i_1 < \cdots < i_k \le n$,
we define a map $p_{i_0i_1\dots i_k} : X^{n+1} \to X^{k+1}$ by
\[
p_{i_0i_1\dots i_k}(x_0,\dots,x_n) := (x_{i_0},\dots,x_{i_k}).
\]
We also define $p_{\hat{i}} := p_{0\cdots\hat{i}\cdots n} = p_{0\cdots i-1, i+1 \cdots n}$, i.e.,
\[
p_{\hat{i}}(x_0,\dots,x_n) = (x_0,\dots,\hat{x}_i,\dots,x_n)
= (x_0,\dots,x_{i-1},x_{i+1},\dots,x_n).
\]
For $\nu \in \cM(X^{n+1})$, we define
\[
(p_{i_0i_1\dots i_k})_\bullet\nu := (p_{i_0i_1\dots i_k})_\bullet\nu^+ - (p_{i_0i_1\dots i_k})_\bullet\nu^-.
\]

\begin{defn}[Condition BGR]
Let $\Gamma$ be a discrete mm-space.
We say that $\Gamma$ satisfies \emph{condition BGR} if
the following conditions BG and BR are satisfied.

\emph{Condition BG} (bounded geometry):
For every $r > 0$,
\[
\sup_{x \in \Gamma} \# B_r(x) < \infty,
\]
where $\# B_r(x)$ is the number of points in the closed $r$-ball $B_r(x)$ of $x$.

\emph{Condition BR} (bounded ratio):
For every $r > 0$ and $x,y \in \Gamma$ with $d_\Gamma(x,y) \le r$,
\[
\mu_\Gamma(x) \asymp_r \mu_\Gamma(y),
\]
where $A \asymp_r B$ means that there exists a constant $C_r \ge 1$ depending only on $r$ such that $C_r^{-1} A \le B \le C_r A$.

We call a discrete mm-space satisfying condition BGR a \emph{BGR discrete mm-space}.
\end{defn}

In the case where $X$ satisfies BG, $\nu \in \cM(X)$ if and only if $\nu = f\lambda$ for a function $f : X \to \R$, where $\lambda$ denotes the counting measure on $X$.

From now on, let $\Gamma$ be a BGR discrete mm-space.
Note that $\Gamma$ is at most countable.
Let $n \ge 0$ be an integer.
We equip the product space $\Gamma^{n+1}$ with the $\ell^1$ metric
\[
d_{\Gamma^{n+1}}(\mathbf{x},\mathbf{x}')
:= \sum_{i=0}^n d_\Gamma(x_i,x_i')
\]
for $\mathbf{x}=(x_0,\dots,x_n), \mathbf{x}'=(x_0',\dots,x_n') \in \Gamma^{n+1}$.
We also equip $\Gamma^{n+1}$ with the measure
\begin{equation} \label{eq:mu-Gamma-prod}
\mu_{\Gamma^{n+1}} := \sum_{(x_0,\dots,x_n) \in \Gamma^{n+1}} (\mu_\Gamma(x_0) \cdots \mu_\Gamma(x_n))^{\frac1{n+1}} \delta_{(x_0,\dots,x_n)}.
\end{equation}
Note that the exponent $\frac{1}{n+1}$ is crucial for ensuring that the boundary operator preserves the chain groups defined below.
Let $\ell^\infty(\Gamma^{n+1})$ denote the Banach space of bounded functions on $\Gamma^{n+1}$ with the $\ell^\infty$ norm $\|\cdot\|_\infty$.
For $\nu \in \cM(\Gamma^{n+1})$ we define
\[
\Delta_\nu := \sup_{(x_0,\dots,x_n) \in \supp \nu} \max_{i,j} d_\Gamma(x_i,x_j) \quad(\le \infty)
\]
if $\nu$ is not identically zero,
and $\Delta_\nu := 0$ if $\nu$ is identically zero.

\begin{defn}[$C^{(\infty)}_n(\Gamma)$] \label{defn:complex}
For a nonnegative integer $n$, let $C^{(\infty)}_n(\Gamma)$ denote the set of all $\nu \in \cM(\Gamma^{n+1})$ satisfying the following conditions \textup{($\Delta$)} and \textup{($\ell^\infty$)}.
\begin{align*}
\tag{$\Delta$}
&\Delta_\nu < \infty.\\
\tag{$\ell^\infty$}
&\left\| \frac{d\nu}{d\mu_{\Gamma^{n+1}}} \right\|_\infty < \infty.
\end{align*}
For a negative integer $n$, we define $C^{(\infty)}_n(\Gamma) := 0$.
\end{defn}

\begin{lem}
For every $n \in \Z$, $C^{(\infty)}_n(\Gamma)$ is a vector subspace of $\cM(\Gamma^{n+1})$.
\end{lem}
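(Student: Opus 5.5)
For negative $n$ there is nothing to prove, since $C^{(\infty)}_n(\Gamma)=0$. So fix $n \ge 0$. Because $\Gamma$ satisfies BG, $\Gamma^{n+1}$ is countable and every $\nu \in \cM(\Gamma^{n+1})$ can be written as $\nu = f\lambda$ for a function $f$, with $\lambda$ the counting measure. Every point of $\Gamma^{n+1}$ has positive $\mu_{\Gamma^{n+1}}$-mass, because $\mu_\Gamma$ is fully supported on the discrete space $\Gamma$. Hence the density $d\nu/d\mu_{\Gamma^{n+1}}$ is just the pointwise function
\[
\mathbf{x}=(x_0,\dots,x_n) \mapsto \frac{f(\mathbf{x})}{(\mu_\Gamma(x_0)\cdots\mu_\Gamma(x_n))^{1/(n+1)}}.
\]
This function depends linearly on $\nu$. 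Also, $\supp\nu = \{\mathbf{x} : f(\mathbf{x}) \neq 0\}$, since $\nu^\pm$ are the positive and negative parts of $f\lambda$.

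The plan is to check the three subspace conditions directly. First, $0 \in C^{(\infty)}_n(\Gamma)$, since $\Delta_0 = 0$ and the density of $0$ is $0$. Second, take $\nu,\nu' \in C^{(\infty)}_n(\Gamma)$ and $a,b \in \R$. The densities add linearly, so
\[
\left\|\frac{d(a\nu+b\nu')}{d\mu_{\Gamma^{n+1}}}\right\|_\infty \le |a|\left\|\frac{d\nu}{d\mu_{\Gamma^{n+1}}}\right\|_\infty + |b|\left\|\frac{d\nu'}{d\mu_{\Gamma^{n+1}}}\right\|_\infty < \infty,
\]
which gives ($\ell^\infty$). Third, for ($\Delta$) I will use the pointwise description of supports. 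If $af(\mathbf{x}) + bf'(\mathbf{x}) \neq 0$, then $f(\mathbf{x}) \neq 0$ or $f'(\mathbf{x}) \neq 0$, so $\supp(a\nu+b\nu') \subset \supp\nu \cup \supp\nu'$. It follows that $\Delta_{a\nu+b\nu'} \le \max(\Delta_\nu,\Delta_{\nu'}) < \infty$. The degenerate case where $a\nu+b\nu'$ vanishes identically is immediate, since then $\Delta = 0$.

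The only point that needs care is the support inclusion. For a general boundedly finite signed measure one would have to argue through the Jordan decomposition $|a\nu+b\nu'| \le |a||\nu| + |b||\nu'|$ to see that the support of the sum lies in the closure of the union of the supports. In the discrete setting every set is closed and the measures are given by functions, so this reduces to the pointwise observation above. With that settled, $C^{(\infty)}_n(\Gamma)$ is closed under linear combinations and contains $0$, so it is a vector subspace of $\cM(\Gamma^{n+1})$.
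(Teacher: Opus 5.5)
Your proposal is correct and follows essentially the same route as the paper: condition $(\Delta)$ comes from $\supp(a\nu+b\nu')\subset\supp\nu\cup\supp\nu'$, and condition $(\ell^\infty)$ from linearity of the density. You add a few details the paper leaves implicit, such as the pointwise form of the density and that every set is closed in the discrete space, but the argument is the same.
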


\begin{proof}
Let $\xi:=a\nu+b\eta$ for $\nu,\eta\in C^{(\infty)}_n(\Gamma)$ and $a,b\in\mathbb{R}$.
Condition $(\Delta)$ for $\xi$ follows from
$\supp\xi\subset \supp\nu\cup\supp\eta$.
Condition $(\ell^\infty)$ for $\xi$ follows immediately from linearity.
This proves $\xi\in C^{(\infty)}_n(\Gamma)$.
\end{proof}

We define a map $\Phi_\Gamma : C^{(\infty)}_n(\Gamma) \to \ell^\infty(\Gamma^{n+1})$ by
\begin{equation} \label{eq:Phi}
\Phi_\Gamma\nu :=  \frac{d\nu}{d\mu_{\Gamma^{n+1}}},
\qquad \nu \in C^{(\infty)}_n(\Gamma).
\end{equation}
Then $\Phi_\Gamma$ is an injective linear map.
Moreover, for $n=0$, condition $(\Delta)$ is automatic, since $\Delta_\nu=0$ for every $\nu \in C^{(\infty)}_0(\Gamma)$.
Therefore, $\Phi_\Gamma$ is an isomorphism if $n = 0$.

\begin{lem} \label{lem:Cinfty}
For every $n \ge 1$ and $0 \le i_0 < \cdots < i_k \le n$,
\[
(p_{i_0\cdots i_k})_\bullet C^{(\infty)}_n(\Gamma) \subset C^{(\infty)}_k(\Gamma).
\]
\end{lem}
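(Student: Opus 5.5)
Set $\eta := (p_{i_0\cdots i_k})_\bullet\nu = (p_{i_0\cdots i_k})_\bullet\nu^+ - (p_{i_0\cdots i_k})_\bullet\nu^-$ for $\nu \in C^{(\infty)}_n(\Gamma)$, and write $\Delta := \Delta_\nu < \infty$ and $M := \|\Phi_\Gamma\nu\|_\infty$. The plan has three steps: show that $\eta$ is a well-defined element of $\cM(\Gamma^{k+1})$, then verify condition ($\Delta$) for it, then verify condition ($\ell^\infty$). Since $\Gamma$ is countable and discrete, everything is a pointwise sum, and $\nu^\pm$ are just the positive and negative parts of the function $\nu(\{\mathbf{x}\})$.

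\emph{Well-definedness and support.} For $\mathbf{y} = (y_0,\dots,y_k) \in \Gamma^{k+1}$ we have
\[
(p_{i_0\cdots i_k})_\bullet\nu^\pm(\{\mathbf{y}\}) = \sum_{\mathbf{x} \in p_{i_0\cdots i_k}^{-1}(\mathbf{y})} \nu^\pm(\{\mathbf{x}\}).
\]
Only $\mathbf{x} \in \supp\nu$ contribute. For such $\mathbf{x}$ every coordinate lies within distance $\Delta$ of $y_0 = x_{i_0}$, so there are at most $(\sup_z \# B_\Delta(z))^{n-k}$ relevant points, which is finite by condition BG. Hence the sums are finite. A bounded set in $\Gamma^{k+1}$ has bounded preimage intersected with $\supp\nu$, so $(p_{i_0\cdots i_k})_\bullet\nu^\pm$ are boundedly finite and $\eta \in \cM(\Gamma^{k+1})$. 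Moreover $\supp\eta \subset p_{i_0\cdots i_k}(\supp\nu)$, and the coordinates of such points are coordinates of a point of $\supp\nu$. This gives $\Delta_\eta \le \Delta < \infty$, which is condition ($\Delta$).

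\emph{The $\ell^\infty$ bound (main step).} This is where the exponent $\frac1{n+1}$ matters. For $\mathbf{y} \in \supp\eta$,
\[
|\eta(\{\mathbf{y}\})| \le \sum_{\mathbf{x} \in p^{-1}(\mathbf{y}) \cap \supp\nu} M\,(\mu_\Gamma(x_0)\cdots\mu_\Gamma(x_n))^{\frac1{n+1}}.
\]
For each such $\mathbf{x}$, all $x_j$ satisfy $d_\Gamma(x_j,y_0) \le \Delta$, so condition BR gives $\mu_\Gamma(x_j) \le C_\Delta\,\mu_\Gamma(y_0)$. Hence
\[
(\mu_\Gamma(x_0)\cdots\mu_\Gamma(x_n))^{\frac1{n+1}} \le C_\Delta\,\mu_\Gamma(y_0).
\]
Symmetrically, $\mu_\Gamma(y_0) \le C_\Delta\,\mu_\Gamma(y_l)$ for each $l$, so
\[
\mu_\Gamma(y_0) \le C_\Delta\,(\mu_\Gamma(y_0)\cdots\mu_\Gamma(y_k))^{\frac1{k+1}}.
\]
Combining these with the cardinality bound from the first step yields
\[
\left|\frac{d\eta}{d\mu_{\Gamma^{k+1}}}(\mathbf{y})\right| \le M\,C_\Delta^2\,\bigl(\sup_z \#B_\Delta(z)\bigr)^{n-k},
\]
uniformly in $\mathbf{y}$. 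This is condition ($\ell^\infty$).

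The only delicate point is that the comparison constants must be uniform. This is guaranteed because they depend only on the single number $\Delta_\nu$, through condition BG for the cardinality and condition BR for the measure ratios. It is also worth noting that $\mu_{\Gamma^{k+1}}$ has full support, since $\mu_\Gamma$ does, so the Radon--Nikodym derivative is just the pointwise ratio.
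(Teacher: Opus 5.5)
Your proof is correct and follows essentially the same route as the paper. It checks bounded finiteness, then condition $(\Delta)$ with $\Delta_{\eta}\le\Delta_\nu$, then condition $(\ell^\infty)$ via conditions BR and BG at the single scale $\Delta_\nu$. The only difference is that you treat a general $p_{i_0\cdots i_k}$ in one step by comparing both geometric means with the pivot value $\mu_\Gamma(y_0)$, whereas the paper reduces to the face maps $p_{\hat i}$, compares the deleted coordinate's measure with the geometric mean of the remaining ones, and then iterates.
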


\begin{proof}
It suffices to prove that
$(p_{\hat{i}})_\bullet C_n^{(\infty)}(\Gamma)
\subset C_{n-1}^{(\infty)}(\Gamma)$,
since the general statement then follows by repeated application of this fact.
We prove the case $i=0$; the other cases are analogous.

Take an arbitrary $\nu\in C_n^{(\infty)}(\Gamma)$.

We first show that $(p_{\hat{0}})_\bullet\nu \in \cM(\Gamma^n)$.
Let $A\subset \Gamma^n$ be a bounded Borel set.
Since $\Delta_\nu<\infty$, the set $(\Gamma\times A)\cap\supp\nu$
is bounded.
Therefore,
\[
(p_{\hat{0}})_\bullet\nu^\pm(A)
=
\nu^\pm(\Gamma\times A)
=
\nu^\pm\bigl((\Gamma\times A)\cap\supp\nu\bigr) < \infty.
\]
Hence $(p_{\hat{0}})_\bullet\nu^\pm\in\cM_+(\Gamma^n)$,
and consequently
$(p_{\hat{0}})_\bullet\nu\in\cM(\Gamma^n)$.

Next, we verify condition $(\Delta)$ for $(p_{\hat{0}})_\bullet\nu$.
Suppose that there exists a point
\[
\mathbf{x} = (x_1,\dots,x_n) \in \supp\bigl((p_{\hat{0}})_\bullet\nu\bigr)
\]
such that $\max_{i,j} d_\Gamma(x_i,x_j) > \Delta_\nu$.
Since
\[
|\nu|(\Gamma\times \{\mathbf{x}\}) \ge |(p_{\hat{0}})_\bullet\nu|(\mathbf{x}) > 0,
\]
we have $(\Gamma\times \{\mathbf{x}\})\cap\supp\nu\neq\emptyset$, which contradicts the definition of $\Delta_\nu$.
Therefore,
$\Delta_{(p_{\hat{0}})_\bullet\nu} \le \Delta_\nu < \infty$.

We prove condition $(\ell^\infty)$ for $(p_{\hat 0})_\bullet\nu$.
Take $r>0$ with $\Delta_\nu<r$.
We write $\nu(x_0,\dots,x_n)$ for $\nu(\{(x_0,\dots,x_n)\})$.
For every $(x_1,\dots,x_n)\in\Gamma^n$, we have
\begin{align*}
\left| \frac{d(p_{\hat 0})_\bullet\nu}{d\mu_{\Gamma^n}}(x_1,\dots,x_n)\right|
&= \left| \frac{ \sum_{x_0\in\Gamma}\nu(x_0,x_1,\dots,x_n) }
{ (\mu_\Gamma(x_1)\cdots\mu_\Gamma(x_n))^{1/n} } \right|  \\
&\le \frac{ \sum_{x_0\in B_r(x_1)} |\nu(x_0,x_1,\dots,x_n)| }
{ (\mu_\Gamma(x_1)\cdots\mu_\Gamma(x_n))^{1/n} }.
\end{align*}
If $\nu(x_0,x_1,\dots,x_n) \neq 0$, then
$d_\Gamma(x_0,x_j)<r$ for every $j=1,\dots,n$.
By condition $\mathrm{BR}$, there exists a constant $\lambda_r \ge 1$ such that $\mu_\Gamma(x_0) \le \lambda_r \mu_\Gamma(x_j)$.  Hence,
\[
\mu_\Gamma(x_0) \le
\lambda_r \bigl(\mu_\Gamma(x_1)\cdots\mu_\Gamma(x_n)\bigr)^{1/n}.
\]
Therefore,
\begin{align*}
\left| \frac{d(p_{\hat 0})_\bullet\nu}{d\mu_{\Gamma^n}}(x_1,\dots,x_n) \right|
&\le \lambda_r^{1/(n+1)} \sum_{x_0\in B_r(x_1)}
\frac{|\nu(x_0,x_1,\dots,x_n)|}
{(\mu_\Gamma(x_0)\cdots\mu_\Gamma(x_n))^{1/(n+1)}} \\
&\le \lambda_r^{1/(n+1)}
\left\|\frac{d\nu}{d\mu_{\Gamma^{n+1}}}\right\|_\infty
\sup_{x\in\Gamma}\# B_r(x) <\infty,
\end{align*}
where $\|\cdot\|_\infty$ is the sup-norm.
This proves condition $(\ell^\infty)$ for $(p_{\hat 0})_\bullet\nu$.
\end{proof}

We next introduce some notation needed to describe the boundary operator algebraically.
For sets $S$ and $T$, let $\cF(S,T)$ denote the free vector space over $\R$ generated by all maps $f : S \to T$.
For sets $S$, $T$ and $U$ and for
$f = \sum_i a_i f_i \in \cF(S,T)$, $g = \sum_j b_j g_j \in \cF(T,U)$, we define
\[
g \circ f := \sum_{i,j} a_i b_j (g_j \circ f_i) \in \cF(S,U).
\]
For $f = \sum_{0 \le i_0 < \cdots < i_k \le n} a_{i_0\cdots i_k} p_{i_0\cdots i_k} \in \cF(\Gamma^{n+1},\Gamma^{k+1})$, $k \le n$ and $\nu \in C^{(\infty)}_n(\Gamma)$, we also define
\[
f_\bullet\nu := \sum_{0 \le i_0 < \cdots < i_k \le n} a_{i_0\cdots i_k} (p_{i_0\cdots i_k})_\bullet\nu.
\]
It follows from Lemma \ref{lem:Cinfty} that $f_\bullet : C^{(\infty)}_n(\Gamma) \to C^{(\infty)}_k(\Gamma)$ is a well-defined linear map.
For such $f$ and for
$g = \sum_{0 \le j_0 < \cdots < j_l \le k} b_{j_0\cdots j_l} p_{j_0\cdots j_l} \in \cF(\Gamma^{k+1}, \Gamma^{l+1})$, $l \le k$, we have
\[
(g \circ f)_\bullet = g_\bullet f_\bullet
\]
on $C^{(\infty)}_n(\Gamma)$.

\begin{defn}[Boundary operator]
For $n \ge 1$, we define
\[
\tilde{\partial} := \sum_{i=0}^n (-1)^i p_{\hat{i}} \in \cF(\Gamma^{n+1},\Gamma^n),
\]
where $p_{\hat{i}} : \Gamma^{n+1} \to \Gamma^n$, $i=0,\dots,n$, are the projections.  The \emph{boundary operator} $\partial : C^{(\infty)}_n(\Gamma) \to C^{(\infty)}_{n-1}(\Gamma)$ is defined by $\partial \nu := \tilde{\partial}_\bullet\nu$
for $n \ge 1$, and $\partial := 0$ for $n \le 0$.
\end{defn}

\begin{prop}
We have $\partial\partial = 0$.
In particular, $\{(C^{(\infty)}_n(\Gamma),\partial)\}_{n \in \Z}$ is a chain complex.
\end{prop}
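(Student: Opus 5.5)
The plan is to reduce $\partial\partial=0$ to the purely combinatorial identity $\tilde\partial\circ\tilde\partial=0$ in $\cF(\Gamma^{n+1},\Gamma^{n-1})$. Once that identity is in hand, we use the functoriality $(g\circ f)_\bullet=g_\bullet f_\bullet$ on $C^{(\infty)}_n(\Gamma)$ established just before the definition of the boundary operator. This gives $\partial\partial\nu=\tilde\partial_\bullet\tilde\partial_\bullet\nu=(\tilde\partial\circ\tilde\partial)_\bullet\nu=0_\bullet\nu=0$ for every $\nu\in C^{(\infty)}_n(\Gamma)$ with $n\ge 2$. For $n\le 1$ the composite lands in $C^{(\infty)}_{n-2}(\Gamma)=0$, or involves $\partial=0$, so it vanishes trivially. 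Lemma \ref{lem:Cinfty} guarantees that each $\partial$ indeed maps $C^{(\infty)}_n(\Gamma)$ into $C^{(\infty)}_{n-1}(\Gamma)$, so we get a chain complex.

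For the combinatorial identity, we expand
\[
\tilde\partial\circ\tilde\partial=\sum_{j=0}^{n-1}\sum_{i=0}^{n}(-1)^{i+j}\,p_{\hat j}\circ p_{\hat i},
\]
where the inner $p_{\hat j}$ is the projection $\Gamma^n\to\Gamma^{n-1}$. The standard simplicial identity gives $p_{\hat j}\circ p_{\hat i}=p_{\hat i}\circ p_{\widehat{j+1}}$ for $i\le j$, as maps $\Gamma^{n+1}\to\Gamma^{n-1}$. Concretely, both sides equal $p_{0\cdots\hat i\cdots\widehat{j+1}\cdots n}$, which deletes coordinates $i$ and $j+1$.

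With this, we split the double sum into the parts $i\le j$ and $i>j$. In the part $i>j$, we reindex by $(i',j')=(j,i-1)$, so that $i'\le j'$ and the sign flips. The two parts then cancel term by term, exactly as in singular homology. Because $\cF$ is a free vector space on maps, equality of the underlying maps means equality of the basis elements, so the cancellation is legitimate there.

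The only point needing care is that functoriality was stated for linear combinations of coordinate projections, and that the pushforward of signed measures is defined through the Jordan decomposition. The identity $(p\circ q)_\bullet\nu=p_\bullet q_\bullet\nu$ for a single pair of projections should be checked pointwise on atoms. On a discrete space, $(q_\bullet\nu)(\mathbf y)=\sum_{q(\mathbf x)=\mathbf y}\nu(\mathbf x)$, and the sum converges absolutely by condition $(\Delta)$ and condition BG. This is routine, and the paper already asserts it, so I expect no real obstacle; the main bookkeeping is the index shift in the simplicial identity.
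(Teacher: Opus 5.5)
Your proposal is correct and follows the paper's own proof: use $(g\circ f)_\bullet=g_\bullet f_\bullet$ to reduce $\partial\partial=0$ to the formal identity $\tilde\partial\circ\tilde\partial=0$ in $\cF(\Gamma^{n+1},\Gamma^{n-1})$, then invoke the simplicial identity. You also make explicit what the paper leaves unstated: the cancelling pairing of index pairs, the trivial cases $n\le 1$, and the atomwise check of functoriality for signed pushforwards. All of these check out.
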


\begin{proof}
Since $\partial\partial = (\tilde{\partial} \circ \tilde{\partial})_\bullet$,
it suffices to prove $\tilde{\partial} \circ \tilde{\partial} = 0$.
By the definition,
\[
\tilde{\partial} \circ \tilde{\partial} = \sum_{i=0}^n \sum_{j=0}^{n-1}
(-1)^{i+j} p_{\hat{j}} \circ p_{\hat{i}},
\]
which is equal to $0$ by a standard simplicial identity.
\end{proof}

\begin{defn}[Weighted $\ell^\infty$ homology]
We define the \emph{$n$-th weighted $\ell^\infty$ homology group} by
\[
H_n^{(\infty)}(\Gamma) := \frac{\Ker(\partial : C^{(\infty)}_n(\Gamma) \to C^{(\infty)}_{n-1}(\Gamma))}
{\Image(\partial : C^{(\infty)}_{n+1}(\Gamma) \to C^{(\infty)}_n(\Gamma))}.
\]
\end{defn}

\begin{rem} \label{rem:approx-Hme}
If a Borel measure $\nu_\Gamma$ on $\Gamma$ satisfies $\nu_\Gamma \asymp \mu_\Gamma$,
then
$C^{(\infty)}_n(\Gamma) = C^{(\infty)}_n(\Gamma,\nu_\Gamma)$ and hence
$H^{(\infty)}_n(\Gamma) = H^{(\infty)}_n(\Gamma,\nu_\Gamma)$.
\end{rem}

\begin{rem}
If $\mu_\Gamma$ is the counting measure,
then the uniformly finite homology of $\Gamma$ introduced by Block and Weinberger coincides with the weighted $\ell^\infty$ homology of $\Gamma$.
\end{rem}

\section{Transport Operator}
\label{sec:trans-op}

In this section, given a measured coarse correspondence $\sigma \in \Pi(\mu_\Gamma,\mu_\Lambda)$ between two BGR discrete mm-spaces $\Gamma$ and $\Lambda$,
we introduce the transport operator $\sigma_\sharp : C^{(\infty)}_n(\Gamma) \to C^{(\infty)}_n(\Lambda)$, and prove its basic properties.

Let $X$ and $Y$ be mm-spaces.
We equip the product space $X \times Y$ with the $\ell^1$ metric $d_{X \times Y}((x,y),(x',y')) := d_X(x,x') + d_Y(y,y')$.

\begin{lem}
$\Pi(\mu_X,\mu_Y) \subset \cM_+(X \times Y)$.
\end{lem}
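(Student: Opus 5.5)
We need to show that every coupling $\sigma \in \Pi(\mu_X,\mu_Y)$ is a boundedly finite Borel measure on $X \times Y$ with the $\ell^1$ metric. That is, we must check that $\sigma(A) < \infty$ for every bounded Borel set $A \subset X \times Y$. The plan is to reduce to the boundedly finiteness of the marginal $\mu_X$, using that $\sigma$ is controlled by its first projection.

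Let $A \subset X \times Y$ be a bounded Borel set. Since $d_{X\times Y}((x,y),(x',y')) = d_X(x,x') + d_Y(y,y')$, boundedness of $A$ gives $d_X(x,x') \le \operatorname{diam} A < \infty$ for all $(x,y),(x',y') \in A$. Hence the projection $p_X(A)$ is a bounded subset of $X$, so it lies in some closed ball $B := B_R(x_0)$, which is a bounded Borel set in $X$.

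Because $A \subset B \times Y = p_X^{-1}(B)$ and $\sigma$ is a coupling, monotonicity and the marginal condition give
\[
\sigma(A) \le \sigma(B \times Y) = (p_X)_\bullet\sigma(B) = \mu_X(B) < \infty,
\]
where the last inequality holds because $\mu_X$ is boundedly finite (it is the measure of an mm-space). Since $\sigma$ is Borel by definition of a coupling, it follows that $\sigma \in \cM_+(X \times Y)$.

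There is no real obstacle here. The only point needing care is passing through a Borel ball $B$ containing $p_X(A)$ rather than using $p_X(A)$ itself, since the projection of a Borel set need not be Borel, as the paper noted earlier. The argument would work equally well via $p_Y$ and $\mu_Y$.
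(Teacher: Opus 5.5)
Your proposal is correct and matches the paper's proof: both bound $\sigma(A)$ by $\sigma(B\times Y)=\mu_X(B)$ for a bounded Borel set $B\supset p_X(A)$. The only difference is that the paper takes $B=\overline{p_X(A)}$ where you take a closed ball, and both choices avoid the possible non-Borel measurability of $p_X(A)$.
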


\begin{proof}
Take an arbitrary $\sigma \in \Pi(\mu_X,\mu_Y)$.
For every bounded Borel set $A \subset X \times Y$,
since $A \subset p_X(A) \times Y$, we have
$\sigma(A) \le \sigma(\overline{p_X(A)} \times Y)
= \mu_X(\overline{p_X(A)})$, which is finite by the boundedness of $\overline{p_X(A)}$.
Therefore, $\sigma \in \cM_+(X \times Y)$.
\end{proof}

From now on, we equip product spaces such as $X^{n+1}$ and $Y^{n+1}$ with the $\ell^1$ metric.

\begin{lem} \label{lem:disint-prod}
For every $\sigma \in \Pi(\mu_X,\mu_Y)$ and $n \ge 0$, we have the following.
\begin{enumerate}
\item
$\sigma^{\otimes (n+1)} \in \Pi(\mu_X^{\otimes (n+1)},\mu_Y^{\otimes (n+1)})$, where we identify $(X \times Y)^{n+1}$ with $X^{n+1} \times Y^{n+1}$.
\item
The families
$\{\sigma_{x_0} \otimes \cdots
\otimes \sigma_{x_n}\}_{(x_0,\dots,x_n) \in X^{n+1}}$
and $\{\sigma_{y_0} \otimes \cdots
\otimes \sigma_{y_n}\}_{(y_0,\dots,y_n) \in Y^{n+1}}$
are disintegrations of $\sigma^{\otimes(n+1)}$ for $p_{X^{n+1}}$ and $p_{Y^{n+1}}$, respectively.
\end{enumerate}
\end{lem}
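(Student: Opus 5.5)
The plan is to verify both parts by checking them on product sets. Both $\sigma^{\otimes(n+1)}$ and the candidate marginals or disintegrated measures are $\sigma$-finite, so a uniqueness-of-measures argument then finishes each step.

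Since $\mu_X$ and $\mu_Y$ are boundedly finite and $X$, $Y$ are separable, each of $\mu_X$, $\mu_Y$ and $\sigma$ is $\sigma$-finite. Therefore the product measures $\sigma^{\otimes(n+1)}$, $\mu_X^{\otimes(n+1)}$ and $\mu_Y^{\otimes(n+1)}$ are well defined and uniquely determined by their values on measurable rectangles. Separability also gives $\mathcal{B}(X^{n+1}) = \mathcal{B}(X)^{\otimes(n+1)}$, and similarly for $Y$ and $X\times Y$. Hence the identification $(X\times Y)^{n+1}\cong X^{n+1}\times Y^{n+1}$ is compatible with the Borel structures, and the product measure is an honest Borel measure.

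\emph{Part (1).} Take Borel sets $A_0,\dots,A_n \subset X$. Then
\[
\sigma^{\otimes(n+1)}\bigl((A_0\times\cdots\times A_n)\times Y^{n+1}\bigr)=\prod_i \sigma(A_i\times Y)=\prod_i\mu_X(A_i).
\]
So the push-forward of $\sigma^{\otimes(n+1)}$ under $p_{X^{n+1}}$ agrees with $\mu_X^{\otimes(n+1)}$ on rectangles. Rectangles of finite measure form a $\pi$-system that generates the Borel $\sigma$-algebra and exhausts $X^{n+1}$, so the two measures coincide. The $Y$ side is identical.

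\emph{Part (2).} We must check two things about the family $\sigma_{\mathbf{x}}:=\sigma_{x_0}\otimes\cdots\otimes\sigma_{x_n}$ indexed by $\mathbf{x}\in X^{n+1}$.

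First, measurability. For each Borel $B\subset Y^{n+1}$, the map $\mathbf{x}\mapsto\sigma_{\mathbf{x}}(B)$ must be Borel. For rectangles this map is the product $\prod_i\sigma_{x_i}(B_i)$, which is Borel. The class of $B$ for which measurability holds is a Dynkin system, because each $\sigma_{\mathbf{x}}$ is a probability measure. By the $\pi$–$\lambda$ theorem it therefore contains all Borel sets.

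Second, the disintegration identity
\[
\int_{X^{n+1}\times Y^{n+1}} f\,d\sigma^{\otimes(n+1)}=\int_{X^{n+1}}\int_{Y^{n+1}} f\,d\sigma_{\mathbf{x}}\,d\mu_X^{\otimes(n+1)}(\mathbf{x})
\]
for nonnegative Borel $f$. Take $f$ to be the indicator of a rectangle $\prod_i(A_i\times B_i)$. Then Fubini (Tonelli) factorizes the right side as $\prod_i\int_{A_i}\sigma_{x_i}(B_i)\,d\mu_X(x_i)=\prod_i\sigma(A_i\times B_i)$, which equals the left side. Both sides are measures in the set variable, and they agree on the $\pi$-system of rectangles of finite measure. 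By $\sigma$-finiteness they agree on all Borel sets, and monotone convergence then extends the identity to all nonnegative $f$. The statement for $p_{Y^{n+1}}$ follows by symmetry.

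The main obstacle is bookkeeping rather than any deep step. One must ensure the uniqueness theorem applies: that is, use the $\sigma$-finite exhaustion by rectangles $\prod_i(K_i\times Y)$ with $\mu_X(K_i)<\infty$, and confirm that product Borel $\sigma$-algebras coincide with Borel $\sigma$-algebras of products by separability. Once these are in place, the rest is Tonelli's theorem.
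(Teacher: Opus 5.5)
Your proof is correct and takes the same approach as the paper. The paper checks the marginals on rectangles $A_0\times\cdots\times A_n\times Y^{n+1}$, and obtains the disintegration by the factorization $d\sigma^{\otimes(n+1)}=\prod_i d\sigma_{x_i}(y_i)\,d\mu_X(x_i)$, which is exactly your Tonelli computation on rectangles. Your version additionally spells out the bookkeeping the paper leaves implicit: $\sigma$-finiteness, the $\pi$--$\lambda$ extension from rectangles, Borel measurability of $\mathbf{x}\mapsto\sigma_{\mathbf{x}}(B)$, and $\mathcal{B}(X^{n+1})=\mathcal{B}(X)^{\otimes(n+1)}$ by separability.
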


\begin{proof}
We prove (1).
For any Borel sets $A_i \subset X$, $i=0,\dots,n$,
\begin{align*}
&\sigma^{\otimes (n+1)}(A_0 \times \dots \times A_n \times Y^{n+1})
= \sigma(A_0 \times Y) \cdots \sigma(A_n \times Y)\\
&= \mu_X(A_0) \cdots \mu_X(A_n)
= \mu_X^{\otimes (n+1)}(A_0 \times \cdots \times A_n),
\end{align*}
which implies $(p_{X^{n+1}})_\bullet \sigma^{\otimes (n+1)} = \mu_X^{\otimes (n+1)}$.  In the same way, $(p_{Y^{n+1}})_\bullet \sigma^{\otimes (n+1)} = \mu_Y^{\otimes (n+1)}$.
This proves (1).

We prove (2).  By (1),
\begin{align*}
&d\sigma^{\otimes (n+1)}(x_0,\dots,x_n,y_0,\dots,y_n)\\
&= d\sigma(x_0,y_0) \cdots d\sigma(x_n,y_n)\\
&= d\sigma_{x_0}(y_0) \cdots d\sigma_{x_n}(y_n)
d\mu_X(x_0) \cdots d\mu_X(x_n)\\
&= d(\sigma_{x_0}\otimes \cdots \otimes \sigma_{x_n})(y_0,\dots,y_n)
d\mu_X^{\otimes (n+1)}(x_0,\dots,x_n).
\end{align*}
Hence, $\{\sigma_{x_0} \otimes \cdots
\otimes \sigma_{x_n}\}_{(x_0,\dots,x_n) \in X^{n+1}}$ is a disintegration of $\sigma^{\otimes(n+1)}$ for $p_{X^{n+1}}$.
The other is proved in the same way.
\end{proof}

Let $\Gamma$ and $\Lambda$ be BGR discrete mm-spaces.
Let $\sigma \in \Pi(\mu_\Gamma,\mu_\Lambda)$ and let $\{\sigma_x\}_{x \in \Gamma}$ be a disintegration of $\sigma$ for the projection $p_\Gamma : \Gamma \times \Lambda \to \Gamma$.
Note that a disintegration is unique since $\Gamma$ is discrete.
For $\mathbf{x} = (x_0,\dots,x_n) \in \Gamma^{n+1}$,
we define
\[
\sigma^{\otimes(n+1)}_{\mathbf{x}} := \sigma_{x_0} \otimes \cdots
\otimes \sigma_{x_n}.
\]

\begin{defn}[Transport operator]
For a (nonnegative) measure $\nu$ on $\Gamma^{n+1}$, we define a measure $\sigma_\sharp\nu$ on $\Lambda^{n+1}$ by
\[
\sigma_\sharp\nu :=
\int_{\Gamma^{n+1}} \sigma^{\otimes(n+1)}_{\mathbf{x}} \, d\nu(\mathbf{x}).
\]
For a chain $\nu \in C^{(\infty)}_n(\Gamma)$, we define
\[
\sigma_\sharp\nu := \sigma_\sharp\nu^+ - \sigma_\sharp\nu^-,
\]
where $\sigma_\sharp\nu(A)$ for $A \subset \Lambda^{n+1}$ is defined whenever at least one of $\sigma_\sharp\nu^-(A)$ and $\sigma_\sharp\nu^+(A)$ is finite.
We call $\sigma_\sharp$ the \emph{transport operator} induced by $\sigma$.
\end{defn}

Let $\tilde{\sigma}^{\otimes(n+1)}_{\mathbf{x}}
:= \delta_{\mathbf{x}} \otimes \sigma^{\otimes(n+1)}_{\mathbf{x}}$.
For a measure $\nu$ on $\Gamma^{n+1}$, since
\[
(p_{\Gamma^{n+1}})_\bullet\int_{\Gamma^{n+1}} \tilde{\sigma}^{\otimes(n+1)}_{\mathbf{x}} \, d\nu(\mathbf{x})
= \nu, \qquad
(p_{\Lambda^{n+1}})_\bullet \int_{\Gamma^{n+1}} \tilde{\sigma}^{\otimes(n+1)}_{\mathbf{x}} \, d\nu(\mathbf{x})
= \sigma_\sharp\nu,
\]
we have $\int_{\Gamma^{n+1}} \tilde{\sigma}^{\otimes(n+1)}_{\mathbf{x}} \, d\nu(\mathbf{x}) \in \Pi(\nu,\sigma_\sharp\nu)$.
Thus, $\sigma_\sharp\nu$ can be regarded as the transported measure of $\nu$ along the transport plan $\sigma^{\otimes (n+1)}$.

\begin{lem} \label{lem:supp-sigma}
For every coupling $\sigma \in \Pi(\mu_\Gamma,\mu_\Lambda)$,
\[
\bigcup_{x \in \Gamma} \{x\} \times \supp\sigma_x = \supp\sigma.
\]
\end{lem}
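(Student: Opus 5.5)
Since $\Gamma$ is discrete and at most countable, both sides can be compared pointwise, and the whole argument reduces to the identity $\sigma(\{x\}\times B) = \mu_\Gamma(x)\,\sigma_x(B)$ for every $x\in\Gamma$ and Borel $B\subset\Lambda$. This identity holds because the disintegration is unique on a discrete space: $\sigma = \sum_{x\in\Gamma}\mu_\Gamma(x)\,\delta_x\otimes\sigma_x$. Here $\mu_\Gamma(x)>0$ for every $x$, since $\mu_\Gamma$ has full support and each singleton $\{x\}$ is open in the discrete space $\Gamma$. I would record these two facts first.

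\emph{Inclusion $\subset$.} Let $y\in\supp\sigma_x$ and let $U$ be an open neighborhood of $(x,y)$ in $\Gamma\times\Lambda$. Since $\{x\}$ is open, $U$ contains a product neighborhood $\{x\}\times V$, where $V$ is an open neighborhood of $y$ in $\Lambda$. Then
\[
\sigma(U)\ge\sigma(\{x\}\times V)=\mu_\Gamma(x)\,\sigma_x(V)>0,
\]
so $(x,y)\in\supp\sigma$.

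\emph{Inclusion $\supset$.} Conversely, let $(x,y)\in\supp\sigma$ and let $V$ be any open neighborhood of $y$. The set $\{x\}\times V$ is an open neighborhood of $(x,y)$, so
\[
0<\sigma(\{x\}\times V)=\mu_\Gamma(x)\,\sigma_x(V).
\]
Hence $\sigma_x(V)>0$ for every such $V$, which means $y\in\supp\sigma_x$.

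The only delicate point is that the identity $\sigma(\{x\}\times B)=\mu_\Gamma(x)\sigma_x(B)$ must hold for \emph{every} $x$, not merely for $\mu_\Gamma$-a.e.\ $x$. This is exactly where discreteness enters: on $\Gamma$, the only $\mu_\Gamma$-null set is the empty set. Indeed, $\sigma_x$ is determined by $\sigma_x(B)=\sigma(\{x\}\times B)/\mu_\Gamma(x)$ with a positive denominator, so the a.e.\ statement in the definition of disintegration is in fact an everywhere statement. I would check this carefully, and the rest of the argument is routine.
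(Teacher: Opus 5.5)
Your proof is correct and follows the paper's argument: both rest on the identity $\sigma(\{x\}\times B)=\mu_\Gamma(x)\,\sigma_x(B)$, with $\mu_\Gamma(x)>0$ for every $x\in\Gamma$. The paper applies this identity only to singletons $B=\{y\}$, using that $\Lambda$ is also discrete so that supports are just sets of atoms ($y\in\supp\sigma_x\iff\sigma_x(y)>0\iff\sigma(x,y)>0\iff(x,y)\in\supp\sigma$); your neighborhood version of the same argument does not need $\Lambda$ to be discrete.
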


\begin{proof}
Since $\sigma(x,y)=\mu_\Gamma(x)\sigma_x(y)$,
\[
y\in \supp\sigma_x
\ \Longleftrightarrow\ \sigma_x(y)>0
\ \Longleftrightarrow\ \sigma(x,y)>0
\ \Longleftrightarrow\ (x,y)\in \supp\sigma.
\]
This completes the proof.
\end{proof}

By the next lemma, the transport operator defines a linear operator from $C^{(\infty)}_n(\Gamma)$ to $C^{(\infty)}_n(\Lambda)$ provided that $\sigma$ is a measured coarse correspondence.

\begin{lem} \label{lem:sigma-sharp-op-prime}
For every measured coarse correspondence $\sigma$ between $\Gamma$ and $\Lambda$, we have
\[
\sigma_\sharp C^{(\infty)}_n(\Gamma) \subset C^{(\infty)}_n(\Lambda).
\]
\end{lem}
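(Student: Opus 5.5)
By linearity, $\sigma_\sharp\nu=\sigma_\sharp\nu^+-\sigma_\sharp\nu^-$, and $\supp\nu^\pm\subset\supp\nu$ with $|d\nu^\pm/d\mu_{\Gamma^{n+1}}|\le|d\nu/d\mu_{\Gamma^{n+1}}|$. So it suffices to treat a nonnegative $\nu\in C^{(\infty)}_n(\Gamma)$ and show that $\sigma_\sharp\nu$ is boundedly finite and satisfies ($\Delta$) and ($\ell^\infty$). Once each $\sigma_\sharp\nu^\pm$ lies in $\cM_+(\Lambda^{n+1})$, their difference is in $\cM(\Lambda^{n+1})$. Everything is discrete, so I will work pointwise:
\[
\sigma_\sharp\nu(\mathbf{y})=\sum_{\mathbf{x}\in\Gamma^{n+1}}\nu(\mathbf{x})\prod_{i=0}^n\sigma_{x_i}(y_i),
\qquad \sigma_{x}(y)=\frac{\sigma(x,y)}{\mu_\Gamma(x)}.
\]

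\textbf{Support control ($\Delta$).} By Lemma \ref{lem:supp-sigma}, $\sigma_\sharp\nu(\mathbf{y})>0$ forces the existence of $\mathbf{x}\in\supp\nu$ with $(x_i,y_i)\in\supp\sigma$ for every $i$. Since $d_\Gamma(x_i,x_j)\le\Delta_\nu$ and $\supp\sigma$ is bornologous, $d_\Lambda(y_i,y_j)\le R(\supp\sigma,\Delta_\nu)$. Hence $\Delta_{\sigma_\sharp\nu}<\infty$.

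\textbf{Density bound ($\ell^\infty$).} I fix $\mathbf{y}$. The contributing $\mathbf{x}$ satisfy $(x_i,y_i)\in\supp\sigma$, and bornologousness of $\supp\sigma^{-1}$ gives $d_\Gamma(x_i,x_j)\le R':=R(\supp\sigma^{-1},\Delta_{\sigma_\sharp\nu})$. In particular each $x_i$ lies in $\sigma^{-1}$-image of $y_i$, a set of diameter at most $R(\supp\sigma^{-1},0)$. By BG the number of contributing $\mathbf{x}$ is bounded by a constant $N$ independent of $\mathbf{y}$. For each such $\mathbf{x}$,
\[
\nu(\mathbf{x})\le\|\Phi_\Gamma\nu\|_\infty\prod_i\mu_\Gamma(x_i)^{\frac1{n+1}},
\]
so the term is bounded by
\[
\|\Phi_\Gamma\nu\|_\infty\prod_i\mu_\Gamma(x_i)^{\frac1{n+1}-1}\sigma(x_i,y_i).
\]
The key comparison is to bound $\sigma(x_i,y_i)\le\mu_\Lambda(y_i)$ and also $\sigma(x_i,y_i)\le\mu_\Gamma(x_i)$. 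Then I write
\[
\sigma(x_i,y_i)=\sigma(x_i,y_i)^{\frac1{n+1}}\,\sigma(x_i,y_i)^{\frac n{n+1}}\le\mu_\Lambda(y_i)^{\frac1{n+1}}\mu_\Gamma(x_i)^{\frac n{n+1}}.
\]
This exactly cancels the factor $\mu_\Gamma(x_i)^{-\frac n{n+1}}$, leaving $\prod_i\mu_\Lambda(y_i)^{\frac1{n+1}}$, which is the density of $\mu_{\Lambda^{n+1}}$ at $\mathbf{y}$. Summing over at most $N$ terms gives
\[
\bigl|d\sigma_\sharp\nu/d\mu_{\Lambda^{n+1}}\bigr|\le N\|\Phi_\Gamma\nu\|_\infty.
\]
Notably this step does not even need condition BR.

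\textbf{Bounded finiteness.} A bounded set $A\subset\Lambda^{n+1}$ is finite by BG, so $\sigma_\sharp\nu(A)<\infty$ follows from the pointwise bound. The main obstacle I expect is the counting step: one must verify that bi-bornologousness, together with BG on $\Gamma$, bounds the number of preimage tuples uniformly in $\mathbf{y}$. The interpolation trick with the exponent $\frac1{n+1}$ is what makes the weights match.
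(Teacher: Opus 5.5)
Your proof is correct. The ($\Delta$) step is the same as the paper's, but the bounded-finiteness and ($\ell^\infty$) steps take a different route.

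\textbf{Bounded finiteness.} The paper pulls a bounded $A\subset\Lambda^{n+1}$ back through the bi-bornologous set $(\supp\sigma)^{n+1}$ to a bounded $S\subset\Gamma^{n+1}$, and then bounds $\sigma_\sharp\nu^\pm(A)\le\nu^\pm(S)$. You instead note that bounded subsets of $\Lambda^{n+1}$ are finite by BG on $\Lambda$, and then use your pointwise bound.

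\textbf{The ($\ell^\infty$) condition.} The paper rewrites the density through the reverse disintegration:
\[
\Phi_\Lambda(\sigma_\sharp\nu)(\mathbf{y})=\sum_{\mathbf{x}}\Phi_\Gamma\nu(\mathbf{x})\prod_i\sigma_{y_i}(x_i)\bigl(\mu_\Lambda(y_i)/\mu_\Gamma(x_i)\bigr)^{\frac{n}{n+1}} .
\]
It then proves $\mu_\Lambda(y)\le C_0\,\mu_\Gamma(x)$ on $\supp\sigma$, using bornologousness of $(\supp\sigma)^{-1}$ together with BG and BR on $\Gamma$. No counting is needed after that, because $\prod_i\sigma_{y_i}$ is a probability measure, and the resulting constant is $C_0^n$.

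You instead bound each term directly by the target weight via the interpolation
\[
\sigma(x,y)\le\mu_\Gamma(x)^{\frac{n}{n+1}}\mu_\Lambda(y)^{\frac1{n+1}} ,
\]
which cancels the leftover factor exactly. You then count the contributing tuples using bornologousness of $(\supp\sigma)^{-1}$ at a fixed scale plus BG on $\Gamma$, which gives the constant $N_R^{n+1}$.

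\textbf{What each route buys.} Your version is more economical: it shows that $\sigma_\sharp$ preserves ($\ell^\infty$) without using condition BR at all. The paper's version packages the estimate as an explicit bounded operator $T_\sigma$ on $\ell^\infty$, with a counting-free and cleaner constant.

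\textbf{Minor points.} Bornologousness is only stated for $r>0$, so use $R(\supp\sigma^{-1},r)$ for any fixed $r>0$ in place of $R(\supp\sigma^{-1},0)$. Also, the bound $d_\Gamma(x_i,x_j)\le R'$ you mention is never used and can be dropped.
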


\begin{proof}
Take an arbitrary $\nu \in C^{(\infty)}_n(\Gamma)$.

We first show that $\sigma_\sharp\nu \in \cM(\Lambda^{n+1})$.
Let $A \subset \Lambda^{n+1}$ be any bounded Borel set.
Since $\supp\sigma$ is bi-bornologous,
so is $\supp\sigma^{\otimes(n+1)} = (\supp\sigma)^{n+1}$.
Hence, $S := \overline{(\supp\sigma^{\otimes(n+1)})^{-1}[A]}$ is bounded.

Fix $\mathbf{x} \in \Gamma^{n+1}$ and suppose that $\supp\sigma^{\otimes(n+1)}_\mathbf{x} \cap A \neq \emptyset$.
Then we find $\mathbf{y} \in \supp\sigma^{\otimes(n+1)}_\mathbf{x} \cap A$.
By Lemma \ref{lem:supp-sigma}, $(\mathbf{x},\mathbf{y}) \in \supp\sigma^{\otimes(n+1)}$.
It follows from the definition of $S$ that
$(\mathbf{x},\mathbf{y})\in
\supp\sigma^{\otimes(n+1)}$ and $\mathbf{y}\in A$ imply $\mathbf{x}\in S$.
Therefore, for every $\mathbf{x} \in \Gamma^{n+1} \cap S^c$, we have $\supp\sigma^{\otimes(n+1)}_\mathbf{x} \cap A = \emptyset$ and hence
\[
\int_{S^c} \sigma^{\otimes(n+1)}_{\mathbf{x}}(A) \, d\nu^\pm(\mathbf{x}) = 0,
\]
which implies
\[
\sigma_\sharp\nu^\pm(A) = \int_{S} \sigma^{\otimes(n+1)}_{\mathbf{x}}(A) \, d\nu^\pm(\mathbf{x})
\le \nu^\pm(S) < \infty.
\]
This proves $\sigma_\sharp\nu \in \cM(\Lambda^{n+1})$.

We verify condition ($\Delta$) for $\sigma_\sharp\nu$.
Take any $\mathbf{y} \in \supp\sigma_\sharp\nu$.
We have
\[
\int_{\Gamma^{n+1}} \sigma^{\otimes (n+1)}_\mathbf{x}(\mathbf{y}) \, d|\nu|(\mathbf{x})
= (\sigma_\sharp\nu^+ + \sigma_\sharp\nu^-)(\mathbf{y})
\ge |\sigma_\sharp\nu|(\mathbf{y}) > 0.
\]
There is $\mathbf{x} \in \supp\nu$ such that $\sigma^{\otimes (n+1)}_\mathbf{x}(\mathbf{y}) > 0$.
By Lemma \ref{lem:supp-sigma}, $\mathbf{y} \in \supp\sigma^{\otimes (n+1)}_\mathbf{x}$ implies
$(\mathbf{x},\mathbf{y}) \in \supp \sigma^{\otimes (n+1)}$.
Setting $(x_0,\dots,x_n) := \mathbf{x}$ and $(y_0,\dots,y_n) := \mathbf{y}$, we have $(x_i,y_i) \in \supp\sigma$ for every $i$.
Since $\mathbf{x} \in \supp\nu$ implies $d_\Gamma(x_i,x_j) \le \Delta_\nu$,
and since $\supp\sigma$ is bornologous,
there exists $R > 0$ depending only on $\Delta_\nu$ and $\sigma$
such that $d_\Lambda(y_i,y_j) \le R$ for every $i$ and $j$.
This proves $\Delta_{\sigma_\sharp\nu} \le R$.

We verify condition ($\ell^\infty$) for $\sigma_\sharp\nu$.
For $n\ge0$, $f \in \ell^\infty(\Gamma^{n+1})$ and
$\mathbf{y}=(y_0,\dots,y_n)\in\Lambda^{n+1}$, we define
\[
(T_\sigma f)(\mathbf{y}) :=
\sum_{\mathbf{x}=(x_0,\dots,x_n) \in \Gamma^{n+1}}
f(\mathbf{x})
\prod_{i=0}^n \sigma_{y_i}(x_i)
\left(
\frac{\mu_\Lambda(y_0)\cdots\mu_\Lambda(y_n)}
{\mu_\Gamma(x_0)\cdots\mu_\Gamma(x_n)}
\right)^{\frac{n}{n+1}}.
\]
For every $\nu \in C^{(\infty)}_n(\Gamma)$, a direct computation gives
\[
\Phi_\Lambda(\sigma_\sharp\nu)=T_\sigma(\Phi_\Gamma\nu).
\]
It suffices to prove that $T_\sigma$ maps
$\ell^\infty(\Gamma^{n+1})$ into
$\ell^\infty(\Lambda^{n+1})$.
More precisely, for every $f\in \ell^\infty(\Gamma^{n+1})$,
\begin{equation} \label{eq:T-sigma}
\|T_\sigma f\|_\infty
\le C(n,\sigma,\Gamma,\Lambda)\|f\|_\infty.
\end{equation}
Put $S:=\supp\sigma$.
Since $S^{-1}$ is bornologous, there exists $R>0$ such that
if $(x,y),(x',y)\in S$, then $d_\Gamma(x,x')\le R$.
By condition BG, there exists $N_R<\infty$ such that
$\# B_R(x)\le N_R$ for every $x\in\Gamma$,
and by condition BR, there exists $\lambda_R\ge1$ such that
\[
d_\Gamma(x,x')\le R
\quad\Longrightarrow\quad
\mu_\Gamma(x')\le \lambda_R\mu_\Gamma(x).
\]
Fix $(x,y)\in S$.  Since $\sigma$ is a coupling,
\[
\mu_\Lambda(y)
=
\sum_{x':(x',y)\in S}\sigma(x',y)
\le
\sum_{x':(x',y)\in S}\mu_\Gamma(x').
\]
Every such $x'$ belongs to $B_R(x)$, and hence
$\mu_\Lambda(y) \le N_R\lambda_R\mu_\Gamma(x)$.
Thus there exists a constant $C_0>0$ such that
$\mu_\Lambda(y)\le C_0\mu_\Gamma(x)$
whenever $(x,y)\in S$.

Now suppose that the summand in the formula for
$(T_\sigma f)(\mathbf{y})$ is nonzero. Then
$(x_i,y_i)\in S$ for every $i$. Hence, for $i=0,\dots,n$,
$\frac{\mu_\Lambda(y_i)}{\mu_\Gamma(x_i)}\le C_0$.
Therefore
\[
\left(
\frac{\mu_\Lambda(y_0)\cdots\mu_\Lambda(y_n)}
{\mu_\Gamma(x_0)\cdots\mu_\Gamma(x_n)}
\right)^{\frac{n}{n+1}}
\le
C_0^n.
\]
Since each $\sigma_{y_i}$ is a probability measure, we obtain
\[
|(T_\sigma f)(\mathbf{y})|
\le
C_0^n
\sum_{\mathbf{x}}
|f(\mathbf{x})|
\prod_{i=0}^n\sigma_{y_i}(x_i)
\le
C_0^n\|f\|_\infty.
\]
This proves \eqref{eq:T-sigma}.
\end{proof}

\begin{lem} \label{lem:phati-sigmasharp}
Let $\sigma$ be a measured coarse correspondence between $\Gamma$ and $\Lambda$.
Then, for every $i=0,\dots,n$, we have
\[
(p_{\hat i})_\bullet \sigma_\sharp = \sigma_\sharp (p_{\hat i})_\bullet
\]
on $C^{(\infty)}_n(\Gamma)$.
\end{lem}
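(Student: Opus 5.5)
The identity is a pointwise computation on atoms, and the plan is to do it first for nonnegative measures and then pass to signed chains. Take $\nu \in C^{(\infty)}_n(\Gamma)$ and write $\nu = \nu^+ - \nu^-$. By definition, $\sigma_\sharp\nu = \sigma_\sharp\nu^+ - \sigma_\sharp\nu^-$. Lemma \ref{lem:sigma-sharp-op-prime}, together with its proof, shows that $\sigma_\sharp\nu^\pm$ are boundedly finite. Hence push-forward by $p_{\hat i}$ is additive on these differences, provided the relevant sums are finite; this finiteness is supplied by the $(\Delta)$ argument in the proofs of Lemmas \ref{lem:Cinfty} and \ref{lem:sigma-sharp-op-prime}. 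It therefore suffices to prove
\[
(p_{\hat i})_\bullet \sigma_\sharp \eta = \sigma_\sharp (p_{\hat i})_\bullet \eta
\]
for each nonnegative $\eta \in \{\nu^+,\nu^-\}$. One caveat: $(p_{\hat i})_\bullet\nu$ need not have Jordan parts $(p_{\hat i})_\bullet\nu^\pm$. To handle this, I will check that $\sigma_\sharp$ is linear on differences $\alpha - \beta$ of nonnegative measures with the needed local finiteness, not merely on Jordan decompositions. This follows because $\sigma_\sharp$ is defined by integrating a fixed kernel, and $\sigma_\sharp$ applied to $\alpha - \beta$ and to $\gamma - \delta$ agree whenever $\alpha + \delta = \beta + \gamma$.

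For nonnegative $\eta$, both sides are computed on atoms, since all spaces are countable and discrete. Fix $\mathbf{y}' = (y_0,\dots,\hat y_i,\dots,y_n) \in \Lambda^n$. By Tonelli, the left side at $\mathbf{y}'$ is
\[
\sum_{y_i \in \Lambda} \sum_{\mathbf{x} \in \Gamma^{n+1}} \eta(\mathbf{x}) \prod_{j} \sigma_{x_j}(y_j)
= \sum_{\mathbf{x}} \eta(\mathbf{x}) \prod_{j \ne i} \sigma_{x_j}(y_j).
\]
This step uses that $\sigma_{x_i}$ is a probability measure, so $\sum_{y_i} \sigma_{x_i}(y_i) = 1$. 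Grouping $\mathbf{x}$ by $p_{\hat i}(\mathbf{x}) = \mathbf{x}'$ turns this into
\[
\sum_{\mathbf{x}'} \bigl((p_{\hat i})_\bullet \eta\bigr)(\mathbf{x}')\, \sigma^{\otimes n}_{\mathbf{x}'}(\mathbf{y}'),
\]
which is $\sigma_\sharp (p_{\hat i})_\bullet\eta$ at $\mathbf{y}'$. Equivalently, in measure language, $(p_{\hat i})_\bullet \sigma^{\otimes(n+1)}_{\mathbf{x}} = \sigma^{\otimes n}_{p_{\hat i}\mathbf{x}}$, and push-forward commutes with integration over $\eta$. Tonelli justifies every rearrangement, since all terms are nonnegative.

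The main obstacle is the bookkeeping for signed measures, i.e.\ making sure no expression of the form $\infty - \infty$ appears. To control this, I will note that for fixed $\mathbf{y}'$ only finitely many $\mathbf{x}$ contribute. Indeed, $\supp\nu$ has $\Delta_\nu < \infty$, and $(x_j,y_j) \in \supp\sigma$ for $j \ne i$ pins $x_j$ to a bounded set because $(\supp\sigma)^{-1}$ is bornologous. Then $x_i$ lies within $\Delta_\nu$ of these, so BG makes the set of contributing $\mathbf{x}$ finite. With finitely many terms, the sums for $\nu^+$ and $\nu^-$ are finite, and subtracting them gives the identity for $\nu$.
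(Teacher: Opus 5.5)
Your proposal is correct and is essentially the paper's argument. Both integrate out the $i$-th coordinate using that $\sigma_{x_i}$ is a probability measure, i.e.\ $(p_{\hat i})_\bullet\sigma^{\otimes(n+1)}_{\mathbf{x}}=\sigma^{\otimes n}_{p_{\hat i}\mathbf{x}}$, and then interchange with integration against $\nu$; the paper pairs with test functions of bounded support on $\Lambda^n$, while you work atom by atom and also make explicit the signed-measure bookkeeping the paper leaves implicit (finitely many contributing $\mathbf{x}$ via $\Delta_\nu<\infty$, bornologous $(\supp\sigma)^{-1}$ and BG, and independence of $\sigma_\sharp$ and $(p_{\hat i})_\bullet$ from the chosen decomposition).
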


\begin{proof}
For every $\nu \in C^{(\infty)}_n(\Gamma)$ and function $f$ on $\Lambda^n$ with bounded support,
\begin{align*}
&\int_{\Lambda^n} f(y_0,\dots,\hat{y}_i,\dots,y_n) \,d(p_{\hat{i}})_\bullet\sigma_\sharp\nu(y_0,\dots,\hat{y}_i,\dots,y_n)\\
&= \int_{\Lambda^{n+1}} f(y_0,\dots,\hat{y}_i,\dots,y_n) \,d\sigma_\sharp\nu(y_0,\dots,y_n)\\
&= \int_{\Gamma^{n+1}} \int_\Lambda \cdots \int_\Lambda f(y_0,\dots,\hat{y}_i,\dots,y_n) \, d\sigma_{x_0}(y_0)\cdots \widehat{d\sigma_{x_i}(y_i)} \cdots  d\sigma_{x_n}(y_n)\\
&\qquad\qquad\qquad\qquad\qquad\qquad\qquad\qquad\quad d\nu(x_0,\dots,x_n)\\
&= \int_{\Lambda^n} f \, d\sigma_\sharp(p_{\hat{i}})_\bullet\nu.
\end{align*}
This completes the proof.
\end{proof}

The following is a direct consequence of Lemma \ref{lem:phati-sigmasharp}.

\begin{prop} \label{prop:bdy-sigma-sharp}
For every measured coarse correspondence $\sigma$ between $\Gamma$ and $\Lambda$,
\[
\partial \sigma_\sharp = \sigma_\sharp \partial.
\]
In particular, $\sigma_\sharp$ is a chain map.
\end{prop}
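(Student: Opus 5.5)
The plan is to deduce the identity directly from Lemma \ref{lem:phati-sigmasharp} together with linearity, working degree by degree.

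For $n \le 0$ there is nothing to check. In that range $\partial = 0$ on $C^{(\infty)}_n(\Gamma)$ and on $C^{(\infty)}_n(\Lambda)$. For $n=0$, moreover, $C^{(\infty)}_{-1} = 0$, so both sides vanish.

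Now fix $n \ge 1$ and $\nu \in C^{(\infty)}_n(\Gamma)$.
\begin{itemize}
\item By Lemma \ref{lem:sigma-sharp-op-prime}, $\sigma_\sharp\nu \in C^{(\infty)}_n(\Lambda)$, so $\partial\sigma_\sharp\nu$ is defined.
\item By Lemma \ref{lem:Cinfty}, each $(p_{\hat i})_\bullet\nu$ lies in $C^{(\infty)}_{n-1}(\Gamma)$, so $\sigma_\sharp\partial\nu$ is also defined and lies in $C^{(\infty)}_{n-1}(\Lambda)$.
\end{itemize}
Expanding the definition $\partial = \tilde\partial_\bullet = \sum_{i=0}^n (-1)^i (p_{\hat i})_\bullet$, we get
\[
\partial\sigma_\sharp\nu = \sum_{i=0}^n (-1)^i (p_{\hat i})_\bullet\sigma_\sharp\nu
= \sum_{i=0}^n (-1)^i \sigma_\sharp(p_{\hat i})_\bullet\nu
= \sigma_\sharp \sum_{i=0}^n (-1)^i (p_{\hat i})_\bullet\nu = \sigma_\sharp\partial\nu .
\]
The middle equality is Lemma \ref{lem:phati-sigmasharp}, applied term by term.

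The only point needing care is the third equality, which uses linearity of $\sigma_\sharp$ on $C^{(\infty)}_{n-1}(\Gamma)$. The operator is defined via the Jordan decomposition, $\sigma_\sharp\nu := \sigma_\sharp\nu^+ - \sigma_\sharp\nu^-$, so linearity is not literally built into the definition. To check it, suppose $\nu = \alpha - \beta$ with $\alpha,\beta \in \cM_+$ and $\Delta$-bounded support. Then $\nu^+ + \beta = \nu^- + \alpha$. Since $\sigma_\sharp$ is additive and positively homogeneous on nonnegative measures (it is an integral of the kernel $\mathbf{x}\mapsto\sigma^{\otimes(n+1)}_{\mathbf{x}}$), this gives $\sigma_\sharp\nu = \sigma_\sharp\alpha - \sigma_\sharp\beta$.

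All quantities involved are finite on bounded sets. For the $\Delta$-bounded measures $\alpha,\beta$ that actually occur (positive parts of $\nu$, and their push-forwards), this follows from the argument in Lemma \ref{lem:sigma-sharp-op-prime}. Hence the subtraction is legitimate pointwise on $\Lambda^{n}$, which is a discrete space.

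With linearity in hand, the displayed chain of equalities proves $\partial\sigma_\sharp = \sigma_\sharp\partial$. Together with Lemma \ref{lem:sigma-sharp-op-prime}, this shows that $\sigma_\sharp$ is a chain map $C^{(\infty)}_\ast(\Gamma) \to C^{(\infty)}_\ast(\Lambda)$.
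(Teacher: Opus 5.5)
Your proof is correct and follows the paper's route: the paper presents the proposition as a direct consequence of Lemma \ref{lem:phati-sigmasharp}, applied term by term to $\partial=\sum_{i}(-1)^i(p_{\hat i})_\bullet$. Your additional check that $\sigma_\sharp$ is linear, even though it is defined through the Jordan decomposition, is a detail the paper takes for granted. Your argument for it (additivity on nonnegative measures, plus finiteness on bounded sets from the proof of Lemma \ref{lem:sigma-sharp-op-prime}) is sound.
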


\begin{lem} \label{lem:disint-gluing}
Let $\Gamma$, $\Lambda$ and $Z$ be mm-spaces,
let $\sigma$ be a measured coarse correspondence between $\Gamma$ and $\Lambda$ and let $\tau$ be a measured coarse correspondence between $\Lambda$ and $Z$.  Then, the family of measures
\[
\int_\Lambda \tau_y \, d\sigma_x(y), \quad x \in \Gamma
\]
forms a disintegration of $\tau\circ\sigma$ with respect to $p_\Gamma : \Gamma \times Z \to \Gamma$.
\end{lem}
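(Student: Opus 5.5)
We verify the three defining properties of a disintegration for the family $\rho_x := \int_\Lambda \tau_y \, d\sigma_x(y)$, $x \in \Gamma$. The strategy is to unwind the definition of the gluing $\tau\circ\sigma$ and then swap the order of integration by Tonelli. Since $\Gamma$ is discrete and $\mu_\Gamma$ is fully supported, every point carries positive mass. Hence a disintegration with respect to $p_\Gamma$ is unique, and it suffices to show that
\[
\tau\circ\sigma(\{x\}\times B) = \mu_\Gamma(x)\,\rho_x(B)
\]
for every $x\in\Gamma$ and every Borel set $B\subset Z$. More generally, one can check the integral identity for nonnegative Borel functions $f$ on $\Gamma\times Z$.

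\textbf{Step 1: each $\rho_x$ is a well-defined probability measure.} The map $y\mapsto \tau_y(B)$ is Borel by property (1) of the disintegration $\{\tau_y\}$, so $\rho_x(B)=\int_\Lambda \tau_y(B)\,d\sigma_x(y)$ makes sense. It is countably additive by monotone convergence. Moreover $\rho_x(Z)=1$, because each $\tau_y$ and $\sigma_x$ is a probability measure. Measurability of $x\mapsto\rho_x(B)$ is automatic since $\Gamma$ is discrete (and would follow from Tonelli in general).

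\textbf{Step 2: the disintegration identity.} By definition,
\[
\tau\circ\sigma = (p_{\Gamma Z})_\bullet(\tau\bullet\sigma), \qquad d(\tau\bullet\sigma)(x,y,z) = d\sigma_y(x)\,d\tau_y(z)\,d\mu_\Lambda(y).
\]
Take a nonnegative Borel $f$ on $\Gamma\times Z$. Then
\[
\int f\,d(\tau\circ\sigma)=\int_\Lambda\int_\Gamma\int_Z f(x,z)\,d\tau_y(z)\,d\sigma_y(x)\,d\mu_\Lambda(y).
\]
Set $g(x,y):=\int_Z f(x,z)\,d\tau_y(z)$; this is a nonnegative Borel function on $\Gamma\times\Lambda$ by the standard measurability of kernels. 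The right-hand side is then $\int_{\Gamma\times\Lambda} g\,d\sigma$, using the disintegration $\{\sigma_y\}$ of $\sigma$ with respect to $p_\Lambda$. Re-expressing $\sigma$ through the other disintegration, $d\sigma(x,y)=d\sigma_x(y)\,d\mu_\Gamma(x)$, we get
\[
\int_\Gamma\int_\Lambda\int_Z f(x,z)\,d\tau_y(z)\,d\sigma_x(y)\,d\mu_\Gamma(x)=\int_\Gamma\int_Z f(x,z)\,d\rho_x(z)\,d\mu_\Gamma(x).
\]
The last equality follows from the definition of $\rho_x$: check it first for indicator functions in $z$, then extend by monotone convergence. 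This is exactly $d(\tau\circ\sigma)(x,z)=d\rho_x(z)\,d\mu_\Gamma(x)$.

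The main obstacle is the measurability of $(x,y)\mapsto g(x,y)$, which is needed to justify passing between the two disintegrations of $\sigma$. I would handle it by a monotone class argument. For product indicators $f=1_{A\times B}$ we have $g(x,y)=1_A(x)\tau_y(B)$, which is Borel. The class of $f$ for which $g$ is Borel is closed under increasing limits and proper differences, so it contains all bounded Borel $f$, and then all nonnegative Borel $f$. The hypothesis that $\sigma,\tau$ are measured coarse correspondences is not needed beyond the coupling property and $\sigma$-finiteness, which is what Tonelli requires.
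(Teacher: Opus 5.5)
Your proposal is correct and is essentially the paper's proof: you use the same three-step chain, namely unfolding the gluing, recombining $d\sigma_y(x)\,d\mu_\Lambda(y)$ into $d\sigma(x,y)$, and re-disintegrating as $d\sigma_x(y)\,d\mu_\Gamma(x)$, and you additionally justify the measurability of $(x,y)\mapsto\int_Z f(x,z)\,d\tau_y(z)$, which the paper leaves implicit. The appeal to discreteness of $\Gamma$ is unnecessary, since the lemma is stated for general mm-spaces, but your Step 2 never uses it, so nothing is lost.
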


\begin{proof}
For every nonnegative function $f$ on $\Gamma \times Z$,
\begin{align*}
\int_{\Gamma \times Z} f(x,z) \, d(\tau\circ\sigma)(x,z)
&= \int_\Lambda \int_\Gamma \int_Z f(x,z) \, d\sigma_y(x) d\tau_y(z) d\mu_\Lambda(y)\\
&= \int_{\Gamma \times \Lambda} \int_Z f(x,z) \, d\tau_y(z) d\sigma(x,y)\\
&= \int_\Gamma \int_\Lambda \int_Z f(x,z) \, d\tau_y(z) d\sigma_x(y) d\mu_\Gamma(x).
\end{align*}
This completes the proof.
\end{proof}

\begin{prop} \label{prop:comp-sharp}
Let $\Gamma$, $\Lambda$ and $\Omega$ be BGR discrete mm-spaces.
Let $\sigma$ be a measured coarse correspondence between $\Gamma$ and $\Lambda$,
and let $\tau$ be a measured coarse correspondence between $\Lambda$ and $\Omega$.
Then
\[
(\tau\circ\sigma)_\sharp = \tau_\sharp\sigma_\sharp
\]
as homomorphisms from $C^{(\infty)}_n(\Gamma)$ to $C^{(\infty)}_n(\Omega)$.
\end{prop}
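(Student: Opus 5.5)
The plan is to reduce the statement to an identity between disintegration kernels and then integrate that identity against $\nu^\pm$. First, by Lemma \ref{lem:comp-mcecorr}, $\tau\circ\sigma$ is a measured coarse correspondence between $\Gamma$ and $\Omega$. Hence, by Lemma \ref{lem:sigma-sharp-op-prime}, all three operators $(\tau\circ\sigma)_\sharp$, $\sigma_\sharp$, $\tau_\sharp$ are well defined on the relevant chain groups. In particular, $\tau_\sharp\sigma_\sharp$ makes sense on $C^{(\infty)}_n(\Gamma)$.

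Since $\sigma_\sharp$ and $\tau_\sharp$ are defined on $\nu^+$ and $\nu^-$ separately, it suffices to prove
\[
(\tau\circ\sigma)_\sharp\nu=\tau_\sharp\sigma_\sharp\nu
\]
for nonnegative $\nu$ on $\Gamma^{n+1}$. Here $\tau_\sharp$ is applied to the nonnegative measure $\sigma_\sharp\nu$ via the defining integral formula. One point needs a remark: in general $(\sigma_\sharp\nu)^\pm\neq\sigma_\sharp\nu^\pm$. But all measures involved are boundedly finite, as shown in Lemma \ref{lem:sigma-sharp-op-prime}, and $\tau_\sharp$ is additive on nonnegative measures. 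Therefore $\tau_\sharp(\sigma_\sharp\nu^+)-\tau_\sharp(\sigma_\sharp\nu^-)=\tau_\sharp(\sigma_\sharp\nu)$ holds on every bounded set, which is all that is needed.

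For nonnegative $\nu$, I use Lemma \ref{lem:disint-gluing}: the disintegration of $\tau\circ\sigma$ with respect to $p_\Gamma$ is
\[
(\tau\circ\sigma)_x=\int_\Lambda\tau_y\,d\sigma_x(y),
\]
and disintegrations are unique on discrete spaces. Taking tensor products gives
\[
(\tau\circ\sigma)^{\otimes(n+1)}_{\mathbf{x}}=\bigotimes_i\int_\Lambda\tau_{y_i}\,d\sigma_{x_i}(y_i)=\int_{\Lambda^{n+1}}\tau^{\otimes(n+1)}_{\mathbf{y}}\,d\sigma^{\otimes(n+1)}_{\mathbf{x}}(\mathbf{y}).
\]
This is checked on product sets $A_0\times\cdots\times A_n$ by Fubini. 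The two sides then agree as measures on $\Omega^{n+1}$, since $\Omega^{n+1}$ is countable and singletons are products.

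Integrating against $d\nu(\mathbf{x})$ and using Tonelli for nonnegative integrands, I obtain, for every $\mathbf{z}\in\Omega^{n+1}$,
\[
(\tau\circ\sigma)_\sharp\nu(\mathbf{z})=\int_{\Gamma^{n+1}}\int_{\Lambda^{n+1}}\tau^{\otimes(n+1)}_{\mathbf{y}}(\mathbf{z})\,d\sigma^{\otimes(n+1)}_{\mathbf{x}}(\mathbf{y})\,d\nu(\mathbf{x})=\int_{\Lambda^{n+1}}\tau^{\otimes(n+1)}_{\mathbf{y}}(\mathbf{z})\,d(\sigma_\sharp\nu)(\mathbf{y})=\tau_\sharp\sigma_\sharp\nu(\mathbf{z}).
\]
The middle equality is just the definition of $\sigma_\sharp\nu$ as $\int\sigma^{\otimes(n+1)}_{\mathbf{x}}\,d\nu$, applied to the test function $\mathbf{y}\mapsto\tau^{\otimes(n+1)}_{\mathbf{y}}(\mathbf{z})$. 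All quantities are nonnegative, so no integrability issue arises at this stage.

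The main obstacle is the bookkeeping for signed chains in the reduction above. Concretely, one must verify that evaluating at each point $\mathbf{z}$ gives finite values, so that subtracting the $\nu^+$ and $\nu^-$ parts is legitimate. This follows from boundedly finiteness as in Lemma \ref{lem:sigma-sharp-op-prime}: only $\mathbf{x}$ in a bounded set, hence finitely many by condition BG, contribute to the value at $\mathbf{z}$. Once that is checked, the rest is Fubini--Tonelli.
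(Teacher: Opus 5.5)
Your proposal is correct and follows the paper's proof: both use Lemma \ref{lem:disint-gluing} to write $(\tau\circ\sigma)_x=\int_\Lambda\tau_y\,d\sigma_x(y)$, tensor this to get $(\tau\circ\sigma)^{\otimes(n+1)}_{\mathbf{x}}=\int_{\Lambda^{n+1}}\tau^{\otimes(n+1)}_{\mathbf{y}}\,d\sigma^{\otimes(n+1)}_{\mathbf{x}}(\mathbf{y})$, and then integrate against $\nu$. Your additional step for signed chains is sound and is a point the paper passes over silently: $(\sigma_\sharp\nu)^\pm\neq\sigma_\sharp\nu^\pm$ in general, and you handle this with additivity of $\tau_\sharp$ on nonnegative measures together with bounded finiteness.
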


\begin{proof}
Take disintegrations $\{\sigma_x\}_{x\in \Gamma}$ and
$\{\tau_y\}_{y\in \Lambda}$ of $\sigma$ and $\tau$ with respect to
$p_\Gamma:\Gamma\times \Lambda\to \Gamma$ and $p_\Lambda:\Lambda\times \Omega\to \Lambda$, respectively.
Define a disintegration $\{(\tau\circ\sigma)_x\}_{x\in \Gamma}$ of
$\tau\circ\sigma$ by
\[
(\tau\circ\sigma)_x:=\int_\Lambda \tau_y\,d\sigma_x(y).
\]

We set, for $\mathbf{x} = (x_0,\dots,x_n) \in \Gamma^{n+1}$ and
$\mathbf{y} = (y_0,\dots,y_n) \in \Lambda^{n+1}$,
\begin{align*}
&\sigma^{\otimes(n+1)}_{\mathbf{x}}
:=
\sigma_{x_0}\otimes\cdots\otimes\sigma_{x_n},
\qquad
\tau^{\otimes(n+1)}_{\mathbf{y}}
:=
\tau_{y_0}\otimes\cdots\otimes\tau_{y_n},\\
&(\tau\circ\sigma)^{\otimes(n+1)}_{\mathbf{x}}
:= (\tau\circ\sigma)_{x_0} \otimes\cdots\otimes (\tau\circ\sigma)_{x_n}.
\end{align*}
Let $\nu\in C^{(\infty)}_n(\Gamma)$.
For every $\mathbf{x}\in \Gamma^{n+1}$, we have
\[
(\tau\circ\sigma)^{\otimes(n+1)}_{\mathbf{x}}
=
\int_{\Lambda^{n+1}}
\tau^{\otimes(n+1)}_{\mathbf{y}}\,
d\sigma^{\otimes(n+1)}_{\mathbf{x}}(\mathbf{y}).
\]
Hence,
\begin{align*}
(\tau\circ\sigma)_\sharp\nu
&=
\int_{\Gamma^{n+1}}
(\tau\circ\sigma)^{\otimes(n+1)}_{\mathbf{x}}\,
d\nu(\mathbf{x}) \\
&=
\int_{\Gamma^{n+1}}
\int_{\Lambda^{n+1}}
\tau^{\otimes(n+1)}_{\mathbf{y}}\,
d\sigma^{\otimes(n+1)}_{\mathbf{x}}(\mathbf{y})\,
d\nu(\mathbf{x}) \\
&= \int_{\Lambda^{n+1}} \tau^{\otimes(n+1)}_{\mathbf{y}} \, d(\sigma_\sharp\nu)(\mathbf{y})\\
&= \tau_\sharp\sigma_\sharp\nu.
\end{align*}
This completes the proof.
\end{proof}

For a measured coarse correspondence $\sigma$ between $\Gamma$ and $\Lambda$,
we define a linear operator $\sigma_* : H_n^{(\infty)}(\Gamma) \to H_n^{(\infty)}(\Lambda)$ by
\[
\sigma_*[\nu] = [\sigma_\sharp\nu], \quad \nu \in \Ker\partial.
\]

Proposition \ref{prop:comp-sharp} implies the following.

\begin{cor} \label{cor:comp-star}
Under the same assumptions as in Proposition \ref{prop:comp-sharp},
we have
\[
(\tau \circ \sigma)_* = \tau_* \sigma_*
\]
as homomorphisms from $H^{(\infty)}_n(\Gamma)$ to $H^{(\infty)}_n(\Omega)$.
\end{cor}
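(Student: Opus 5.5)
The plan is to reduce Corollary \ref{cor:comp-star} to the chain-level identity of Proposition \ref{prop:comp-sharp}, after checking that every induced map in sight is well defined on homology.

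First, the three maps $\sigma_*$, $\tau_*$ and $(\tau\circ\sigma)_*$ must all make sense.
\begin{itemize}
\item By Lemma \ref{lem:comp-mcecorr}, $\tau\circ\sigma$ is a measured coarse correspondence between $\Gamma$ and $\Omega$.
\item Hence Lemma \ref{lem:sigma-sharp-op-prime} shows that $(\tau\circ\sigma)_\sharp$ maps $C^{(\infty)}_n(\Gamma)$ into $C^{(\infty)}_n(\Omega)$, just as $\sigma_\sharp$ and $\tau_\sharp$ do on their respective complexes.
\item By Proposition \ref{prop:bdy-sigma-sharp}, each of the three transport operators commutes with $\partial$.
\end{itemize}
A chain map sends cycles to cycles and boundaries to boundaries: if $\nu=\partial\eta$, then $\sigma_\sharp\nu=\partial\sigma_\sharp\eta$. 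So each transport operator descends to a well-defined linear map on $H^{(\infty)}_n$. This justifies the definition $\sigma_*[\nu]=[\sigma_\sharp\nu]$, independent of the representative $\nu$, and likewise for $\tau_*$ and $(\tau\circ\sigma)_*$.

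Next, take any class $[\nu]\in H^{(\infty)}_n(\Gamma)$ with $\nu\in\Ker\partial$. Then $\sigma_\sharp\nu$ is a cycle in $C^{(\infty)}_n(\Lambda)$, so $\tau_*\sigma_*[\nu]=\tau_*[\sigma_\sharp\nu]=[\tau_\sharp\sigma_\sharp\nu]$. Proposition \ref{prop:comp-sharp} gives the chain-level identity $\tau_\sharp\sigma_\sharp\nu=(\tau\circ\sigma)_\sharp\nu$. Combining the two,
\[
\tau_*\sigma_*[\nu]=[(\tau\circ\sigma)_\sharp\nu]=(\tau\circ\sigma)_*[\nu].
\]

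There is essentially no obstacle here. The only point needing care is the well-definedness of the induced maps on homology, which relies on $\tau\circ\sigma$ again being a measured coarse correspondence. That is exactly Lemma \ref{lem:comp-mcecorr}, and it is already implicit in the hypotheses of Proposition \ref{prop:comp-sharp}.
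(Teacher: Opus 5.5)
Your proof is correct and follows the paper's route. The paper derives the corollary directly from the chain-level identity $(\tau\circ\sigma)_\sharp=\tau_\sharp\sigma_\sharp$ of Proposition \ref{prop:comp-sharp}. You have additionally spelled out the well-definedness checks the paper leaves implicit: that $\tau\circ\sigma$ is a measured coarse correspondence (Lemma \ref{lem:comp-mcecorr}), and that the transport operators are linear chain maps (Lemma \ref{lem:sigma-sharp-op-prime} and Proposition \ref{prop:bdy-sigma-sharp}).
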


\section{Invariance of Weighted $\ell^\infty$ Homology}
\label{sec:inv-mBW}

In this section, we prove the invariance of weighted $\ell^\infty$ homology under weakly measured coarse equivalence.

Let $\Gamma$ and $\Lambda$ be BGR discrete mm-spaces.

\begin{lem} \label{lem:equiv-star}
Let $\sigma$ and $\tau$ be two measured coarse correspondences between mm-spaces $\Gamma$ and $\Lambda$.
If $\sigma$ and $\tau$ are equivalent, then $\sigma_\sharp$ and $\tau_\sharp$ are chain homotopic and $\sigma_* = \tau_*$.
\end{lem}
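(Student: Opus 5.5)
We construct a prism-type chain homotopy $h : C^{(\infty)}_n(\Gamma) \to C^{(\infty)}_{n+1}(\Lambda)$, imitating the classical proof that two maps which are ``close'' induce chain homotopic maps on uniformly finite homology. For $\nu \in C^{(\infty)}_n(\Gamma)$ and $i = 0,\dots,n$, define the measure on $\Lambda^{n+2}$
\[
h_i\nu := \int_{\Gamma^{n+1}} \sigma_{x_0}\otimes\cdots\otimes\sigma_{x_i}\otimes\tau_{x_i}\otimes\cdots\otimes\tau_{x_n}\, d\nu(x_0,\dots,x_n),
\]
that is, the $i$-th vertex is doubled, one copy being transported by $\sigma$ and the other by $\tau$. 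Set $h := \sum_{i=0}^n (-1)^i h_i$.

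The homotopy identity $\partial h + h\partial = \tau_\sharp - \sigma_\sharp$ (up to a global sign convention) should then follow from the same face computations as in singular homology. We have $(p_{\hat j})_\bullet h_i\nu$ equal to $h_{i-1}(p_{\hat j})_\bullet\nu$ or $h_i(p_{\hat j})_\bullet\nu$ when $j \ne i, i+1$. This commutation is proved exactly as in Lemma~\ref{lem:phati-sigmasharp}, since integrating a probability factor $\sigma_{x_j}$ or $\tau_{x_j}$ out just drops it. Next, $(p_{\widehat{i+1}})_\bullet h_i = (p_{\hat i})_\bullet h_{i+1}$, since both give the mixed tensor with $\sigma$ up to position $i$ and $\tau$ from position $i+1$. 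Finally, the extreme faces give $(p_{\hat 0})_\bullet h_0 = \tau_\sharp$ and $(p_{\widehat{n+1}})_\bullet h_n = \sigma_\sharp$. Telescoping yields the identity, and the equality $\sigma_* = \tau_*$ follows at once.

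The real work is checking that $h_i\nu \in C^{(\infty)}_{n+1}(\Lambda)$; this is where the equivalence hypothesis is used.

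\textbf{Bounded finiteness and $(\Delta)$.} If $\mathbf{y} \in \supp h_i\nu$, there is $\mathbf{x} \in \supp\nu$ such that $(x_j,y_j) \in \supp\sigma \cup \supp\tau$ for each $j$, and the two copies of $x_i$ land in $\supp\sigma[x_i]$ and $\supp\tau[x_i]$. Since $\supp\sigma\cup\supp\tau$ is bi-bornologous and $d_\Gamma(x_j,x_k) \le \Delta_\nu$, all pairwise distances of the $y$'s are bounded by $R(\Delta_\nu)$. Bounded finiteness follows as in Lemma~\ref{lem:sigma-sharp-op-prime}, using the bornologous inverse of the union.

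\textbf{The condition $(\ell^\infty)$.} This is the step I expect to be the main obstacle, because of the exponent change from $\frac1{n+1}$ to $\frac1{n+2}$. Write $\sigma_x(y) = \sigma(x,y)/\mu_\Gamma(x)$. The density of $h_i\nu$ at $\mathbf{y} = (y_0,\dots,y_{n+1})$ is
\[
\sum_{\mathbf{x}} \nu(\mathbf{x}) \prod_j \rho_j(x_j, y_{\cdot}) \,\big/\, (\mu_\Lambda(y_0)\cdots\mu_\Lambda(y_{n+1}))^{1/(n+2)},
\]
where each $\rho_j$ is a probability kernel factor. Bound it as follows.
\begin{itemize}
\item[(a)] By BG of $\Gamma$ and the bornologous inverse, only boundedly many $\mathbf{x}$ contribute.
\item[(b)] For contributing terms, $|\nu(\mathbf{x})| \le \|\Phi_\Gamma\nu\|_\infty (\prod\mu_\Gamma(x_j))^{1/(n+1)}$.
\item[(c)] As in the proof of Lemma~\ref{lem:sigma-sharp-op-prime}, $(x,y) \in \supp\sigma\cup\supp\tau$ implies $\mu_\Lambda(y) \asymp \mu_\Gamma(x)$, and conversely $\mu_\Gamma(x) \le C\mu_\Lambda(y)$ by the analogous argument with roles swapped (BG and BR on $\Lambda$).
\item[(d)] By BR all the $\mu_\Gamma(x_j)$ are comparable to one another, and all the $\mu_\Lambda(y_k)$ are comparable to them, with constants depending on $\Delta_\nu$.
\end{itemize}
Hence numerator and denominator are both comparable to $\mu_\Gamma(x_0)$ to the first power, and the ratio is bounded by a constant times $\|\Phi_\Gamma\nu\|_\infty$.

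The only delicate point is establishing the two-sided comparability in (c) and (d) uniformly, which uses BG and BR on both spaces together with the bi-bornology of $\supp\sigma\cup\supp\tau$.
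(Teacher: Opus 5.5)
Your argument is correct, and your operator is identical to the paper's, but you organize the proof differently. The paper uses the cylinder $\Gamma\times\{0,1\}$ with measure $\mu_\Gamma\otimes\frac12(\delta_0+\delta_1)$, the maps $h_{n,i}(x_0,\dots,x_n)=((x_0,0),\dots,(x_i,0),(x_i,1),\dots,(x_n,1))$, and the coupling $\gamma=\sigma\otimes\frac{\delta_0}{2}+\tau\otimes\frac{\delta_1}{2}$, whose disintegration is $\gamma_{(x,0)}=\sigma_x$, $\gamma_{(x,1)}=\tau_x$. With these, $\gamma_\sharp(h_{n,i})_\bullet\nu$ is exactly your $h_i\nu$.

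The paper factors the homotopy as $H=\gamma_\sharp\circ(h_n)_\bullet$. The equivalence hypothesis then enters only through one observation: $\supp\gamma=(\supp\sigma\times\{0\})\cup(\supp\tau\times\{1\})$ is bi-bornologous, so $\gamma$ is a measured coarse correspondence. Membership in $C^{(\infty)}_{n+1}(\Lambda)$ and compatibility with $\partial$ then follow from Lemma~\ref{lem:sigma-sharp-op-prime} and Proposition~\ref{prop:bdy-sigma-sharp}. The prism relation becomes the purely formal identity $\tilde\partial h_n+h_{n-1}\tilde\partial=\iota_1^{\times(n+1)}-\iota_0^{\times(n+1)}$ among maps, combined with $\gamma_\sharp\bar\iota_0=\sigma_\sharp$ and $\gamma_\sharp\bar\iota_1=\tau_\sharp$. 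The degree shift (exponent $\frac1{n+1}\to\frac1{n+2}$) therefore only happens on $\Gamma\times\{0,1\}$, where it is an easy BGR check.

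Your direct route avoids the auxiliary space and the disintegration bookkeeping for $\gamma$. The cost is that you must redo by hand the support, bounded-finiteness and $(\ell^\infty)$ estimates across the degree shift, and verify the face relations at the level of measures. Your $(\ell^\infty)$ estimate is valid. It uses $\sigma_x(y)\le 1$ together with the reverse comparison $\mu_\Gamma(x)\le C\mu_\Lambda(y)$ on $\supp\sigma\cup\supp\tau$, which follows from BG and BR on $\Lambda$ and bornologousness of the supports. This mirrors the paper's estimate in Lemma~\ref{lem:sigma-sharp-op-prime}, which instead disintegrates over $\Lambda$ and uses $\mu_\Lambda(y)\le C_0\mu_\Gamma(x)$.

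There is one bookkeeping slip in your face relations. The cancelling pair is $(p_{\widehat{i+1}})_\bullet h_i=(p_{\widehat{i+1}})_\bullet h_{i+1}$: the first removes the $\tau$-copy of $x_i$, the second the $\sigma$-copy of $x_{i+1}$. It is not $(p_{\hat i})_\bullet h_{i+1}$, which equals $h_i(p_{\hat i})_\bullet$ and belongs to the other family. Likewise, for $j>i+1$ the correct relation is $(p_{\hat j})_\bullet h_i=h_i(p_{\widehat{j-1}})_\bullet$. With these indices the signs $(-1)^{2i+1}$ and $(-1)^{2i+2}$ cancel, and the telescoping gives $\partial h+h\partial=\tau_\sharp-\sigma_\sharp$. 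Your verbal description ("$\sigma$ up to position $i$, $\tau$ from position $i+1$") already matches the corrected identity.
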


\begin{proof}
On $\{0,1\}$, we define the metric $d_{\{0,1\}}(a,b) := |a-b|$, $a,b \in \{0,1\}$, and the measure $\mu_{\{0,1\}} := (\delta_0 + \delta_1)/2$.
We equip the product space $\Gamma \times \{0,1\}$ with the $\ell^1$ metric $d_{\Gamma \times \{0,1\}} := d_\Gamma + d_{\{0,1\}}$ and the product measure $\mu_\Gamma \otimes \mu_{\{0,1\}}$.
It is immediate that $\Gamma \times \{0,1\}$ is a BGR discrete mm-space.
For $s=0,1$, we define a map $\iota_s : \Gamma \to \Gamma \times \{0,1\}$ by $\iota_s(x) := (x,s)$, $x \in \Gamma$.  Let
\[
\iota_s^{\times (n+1)}(x_0,\dots,x_n)
:= (\iota_s(x_0),\dots,\iota_s(x_n)) \qquad\text{and}\qquad
\bar{\iota}_s := (\iota_s^{\times (n+1)})_\bullet.
\]

We first prove that
\begin{equation} \label{eq:bar-iota}
\bar{\iota}_s C^{(\infty)}_n(\Gamma) \subset C^{(\infty)}_n(\Gamma \times \{0,1\}).
\end{equation}
For every $\nu \in C^{(\infty)}_n(\Gamma)$, we are going to prove
$\bar{\iota}_s\nu \in C^{(\infty)}_n(\Gamma \times \{0,1\})$.
Under the identification between 
$(\Gamma \times \{0,1\})^{n+1}$ and $\Gamma^{n+1} \times \{0,1\}^{n+1}$,
we have
\begin{equation} \label{eq:iota-sharp}
\bar{\iota}_s \nu = \nu \otimes \delta_s^{\otimes (n+1)}.
\end{equation}
Indeed, for every function $f$ on $\Gamma^{n+1} \times \{0,1\}^{n+1}$ with bounded support,
\begin{align*}
&\int_{\Gamma^{n+1} \times \{0,1\}^{n+1}} f(\mathbf{x},\mathbf{s})
\, d(\bar{\iota}_s\nu)(\mathbf{x},\mathbf{s})\\
&= \int_{\Gamma^{n+1}} f(\mathbf{x},(s,\dots,s))
 \, d\nu(\mathbf{x})\\
&= \int_{\Gamma^{n+1} \times \{0,1\}^{n+1}} f(\mathbf{x},\mathbf{s})
\, d(\nu \otimes \delta_s^{\otimes (n+1)})(\mathbf{x},\mathbf{s}).
\end{align*}
This proves \eqref{eq:iota-sharp}.

We verify ($\Delta$) for $\bar{\iota}_s\nu$.
For every $((x_0,s_0),\dots,(x_n,s_n)) \in \supp\bar{\iota}_s\nu$,
\eqref{eq:iota-sharp} implies $(x_0,\dots,x_n) \in \supp\nu$ and $s_0 = \cdots = s_n = s$.
Therefore, $d_{\Gamma \times \{0,1\}}((x_i,s_i),(x_j,s_j)) = d_\Gamma(x_i,x_j)$ for every $i$ and $j$, which proves
$\Delta_{\bar{\iota}_s\nu} = \Delta_\nu < \infty$.

Condition ($\ell^\infty$) for $\bar{\iota}_s\nu$ follows from \eqref{eq:iota-sharp}.

Thus, $\bar{\iota}_s\nu$ belongs to $C^{(\infty)}_n(\Gamma \times \{0,1\})$.
This proves \eqref{eq:bar-iota}.

Since $p_{\hat{i}} \circ \iota_s^{\times (n+1)} = \iota_s^{\times n} \circ p_{\hat{i}}$, we have $\partial\bar{\iota}_s = \bar{\iota}_s\partial$ and in particular, $\bar{\iota}_s : C^{(\infty)}_n(\Gamma) \to C^{(\infty)}_n(\Gamma \times \{0,1\})$ is a chain map.

We construct a chain homotopy between $\bar{\iota}_0$ and $\bar{\iota}_1$ in the following.
Define maps $h_{n,i} : \Gamma^{n+1} \to (\Gamma \times \{0,1\})^{n+2}$, $i=0,\dots,n$, by
\[
h_{n,i}(x_0,\dots,x_n) := ((x_0,0),\dots,(x_i,0),(x_i,1),\dots,(x_n,1)).
\]

We prove that
\begin{equation} \label{eq:hi}
(h_{n,i})_\bullet C^{(\infty)}_n(\Gamma) \subset C^{(\infty)}_{n+1}(\Gamma \times \{0,1\})
\end{equation}
for every $i=0,\dots,n$.
Take an arbitrary $\nu \in C^{(\infty)}_n(\Gamma)$.

If $A \subset (\Gamma \times \{0,1\})^{n+2}$ is a bounded Borel set,
then $h_{n,i}^{-1}(A)$ is a bounded Borel set in $\Gamma^{n+1}$ and
$(h_{n,i})_\bullet |\nu|(A) = |\nu|(h_{n,i}^{-1}(A)) < \infty$.
Therefore, $(h_{n,i})_\bullet\nu \in \cM((\Gamma \times \{0,1\})^{n+2})$.

We verify condition ($\Delta$) for $(h_{n,i})_\bullet\nu$.
Since $\supp (h_{n,i})_\bullet\nu \subset \overline{h_{n,i}(\supp\nu)}$,
for every $((x_0,s_0),\dots,(x_{n+1},s_{n+1})) \in \supp (h_{n,i})_\bullet\nu$,
there is $(x_0',\dots,x_n') \in \supp\nu$ such that
\[
d_{(\Gamma \times \{0,1\})^{n+2}}((x_0,s_0),\dots,(x_{n+1},s_{n+1}),h_{n,i}(x_0',\dots,x_n')) < 1.
\]
Then, we have
$s_0 = \cdots = s_i = 0$, $s_{i+1} = \cdots = s_{n+1} = 1$ and
\[
\sum_{j=0}^i d_\Gamma(x_j,x_j') + \sum_{k=i+1}^{n+1} d_\Gamma(x_k,x_{k-1}') < 1.
\]
Hence, $d_\Gamma(x_j,x_k) \le \Delta_\nu + 1$ for every $j,k=0,\dots,n+1$,
which implies $\Delta_{(h_{n,i})_\bullet\nu} \le \Delta_\nu + 1 < \infty$.

We verify condition ($\ell^\infty$) for $(h_{n,i})_\bullet\nu$.
Since $\Delta_\nu<\infty$, all coordinates of points in $\supp\nu$
are uniformly close to each other.  By condition BGR, the measures of
these points are uniformly comparable.  Hence the Radon--Nikodym
density of $(h_{n,i})_\bullet\nu$ with respect to
$\mu_{(\Gamma\times\{0,1\})^{n+2}}$ is bounded.

Thus we obtain $(h_{n,i})_\bullet\nu \in C^{(\infty)}_{n+1}(\Gamma \times \{0,1\})$.  The proof of \eqref{eq:hi} is completed.

We define $h_n \in \cF(\Gamma^{n+1},(\Gamma \times \{0,1\})^{n+2})$ by
\[
h_n := \sum_{i=0}^n (-1)^i h_{n,i}.
\]
Then, by \eqref{eq:hi}, $(h_n)_\bullet C^{(\infty)}_n(\Gamma) \subset C^{(\infty)}_{n+1}(\Gamma \times \{0,1\})$.
We prove that $(h_n)_\bullet : C^{(\infty)}_n(\Gamma) \to C^{(\infty)}_{n+1}(\Gamma \times \{0,1\})$ is a chain homotopy between $\bar{\iota}_0$ and $\bar{\iota}_1$.
In fact, the standard prism calculation gives
\[
\tilde{\partial} h_n + h_{n-1} \tilde{\partial} = \iota_1^{\times (n+1)} - \iota_0^{\times (n+1)}.
\]
This implies
\begin{equation} \label{eq:equiv-star1}
\partial (h_n)_\bullet + (h_{n-1})_\bullet \partial = \bar{\iota}_1 - \bar{\iota}_0,
\end{equation}
that is, $(h_n)_\bullet$ is a chain homotopy between $\bar{\iota}_0$ and $\bar{\iota}_1$.

We define a measure $\gamma$ on $(\Gamma \times \Lambda) \times \{0,1\}$ by
\[
\gamma := \sigma \otimes \frac{\delta_0}{2} + \tau \otimes \frac{\delta_1}{2}.
\]
By identifying $(\Gamma \times \Lambda) \times \{0,1\}$ with $(\Gamma \times \{0,1\}) \times \Lambda$,
$\gamma$ is an element of $\Pi(\mu_\Gamma \otimes \mu_{\{0,1\}},\mu_\Lambda)$.
Define
\[
\gamma_{(x,0)}:=\sigma_x,\qquad
\gamma_{(x,1)}:=\tau_x.
\]
Then $\{\gamma_{(x,s)}\}_{(x,s) \in \Gamma \times \{0,1\}}$ is a disintegration of $\gamma$.
In fact,
\begin{align*}
d\gamma((x,s),y)
&= \frac12 (d\sigma(x,y) d\delta_0(s) + d\tau(x,y) d\delta_1(s))\\
&= (I_0(s) \, d\sigma_x(y) + I_1(s) \, d\tau_x(y)) d(\mu_\Gamma \otimes \mu_{\{0,1\}})(x,s)\\
&= d\gamma_{(x,s)}(y) d(\mu_\Gamma \otimes \mu_{\{0,1\}})(x,s).
\end{align*}
Since $\supp\gamma = (\supp\sigma\times\{0\})\cup(\supp\tau\times\{1\})$ and since $\supp\sigma$ and $\supp\tau$ are equivalent as coarse correspondences,
$\gamma$ is a measured coarse correspondence between $\Gamma \times \{0,1\}$ and $\Lambda$.

We next prove
\begin{equation} \label{eq:equiv-star2}
\gamma_\sharp\bar{\iota}_0 = \sigma_\sharp,
\qquad
\gamma_\sharp\bar{\iota}_1 = \tau_\sharp.
\end{equation}
For every $s=0,1$, $\nu \in C^{(\infty)}_n(\Gamma)$ and function $f$ on $\Gamma^{n+1} \times \{0,1\}^{n+1}$ with bounded support,
under the identification between $(\Gamma \times \{0,1\})^{n+1}$ and $\Gamma^{n+1} \times \{0,1\}^{n+1}$,
\begin{align*}
&\int_{\Gamma^{n+1} \times \{0,1\}^{n+1}} f(\mathbf{x},\mathbf{s})\, d\bar{\iota}_s\nu(\mathbf{x},\mathbf{s})\\
&= \int_{\Gamma^{n+1}} f(\mathbf{x},(s,\dots,s)) \, d\nu(\mathbf{x})\\
&= \int_{\Gamma^{n+1} \times \{0,1\}^{n+1}} f(\mathbf{x},\mathbf{s})
\cdot 2^{n+1} I_{(s,\dots,s)}(\mathbf{s}) \, d(\nu \otimes \mu_{\{0,1\}}^{\otimes (n+1)})(\mathbf{x},\mathbf{s}),
\end{align*}
which implies
\[
d\bar{\iota}_s\nu(\mathbf{x},\mathbf{s}) = 2^{n+1} I_{(s,\dots,s)}(\mathbf{s}) \, d(\nu \otimes \mu_{\{0,1\}}^{\otimes (n+1)})(\mathbf{x},\mathbf{s}).
\]
Let $\sigma^{\otimes(n+1)}_{\mathbf{x}} := \sigma_{x_0} \otimes\cdots\otimes \sigma_{x_n}$,
$\tau^{\otimes(n+1)}_{\mathbf{x}} := \tau_{x_0} \otimes\cdots\otimes \tau_{x_n}$ and
$\gamma^{\otimes(n+1)}_{(\mathbf{x},\mathbf{s})}
:= \gamma_{(x_0,s_0)} \otimes\cdots\otimes \gamma_{(x_n,s_n)}$
for $\mathbf{x} = (x_0,\dots,x_n) \in \Gamma^{n+1}$ and $\mathbf{s} = (s_0,\dots,s_n) \in \{0,1\}^{n+1}$.
We have
\begin{align*}
\gamma_\sharp\bar{\iota}_s\nu
&= \int_{\Gamma^{n+1} \times \{0,1\}^{n+1}} 2^{n+1} I_{(s,\dots,s)}(\mathbf{s}) \gamma^{\otimes(n+1)}_{(\mathbf{x},\mathbf{s})} \, d(\nu \otimes \mu_{\{0,1\}}^{\otimes (n+1)})(\mathbf{x},\mathbf{s})\\
&= \int_{\Gamma^{n+1}} \gamma^{\otimes(n+1)}_{(\mathbf{x},(s,\dots,s))} \, d\nu(\mathbf{x}).
\end{align*}
Here, we see
\[
\gamma^{\otimes(n+1)}_{(\mathbf{x},(s,\dots,s))} =
\begin{cases}
\sigma^{\otimes(n+1)}_\mathbf{x} & \text{if $s=0$},\\
\tau^{\otimes(n+1)}_\mathbf{x} & \text{if $s=1$}.
\end{cases}
\]
This proves \eqref{eq:equiv-star2}.

Let $H := \gamma_\sharp h_\bullet$ and apply $\gamma_\sharp$ to both sides of \eqref{eq:equiv-star1}.  Then, by Proposition \ref{prop:bdy-sigma-sharp} and \eqref{eq:equiv-star2}, we obtain
\[
\partial H + H \partial = \tau_\sharp - \sigma_\sharp,
\]
that is, $H$ is a chain homotopy between $\sigma_\sharp$ and $\tau_\sharp$.
Therefore $\sigma_\sharp$ and $\tau_\sharp$ induce the same homomorphism
on homology, namely $\sigma_*=\tau_*$.
\end{proof}

Using Lemma \ref{lem:equiv-star}, we prove the following.

\begin{thm} \label{thm:c-eq-iso}
Let $\Gamma$ and $\Lambda$ be measured coarsely equivalent BGR discrete mm-spaces.
Then, for every measured coarse correspondence $\sigma \in \Pi(\mu_\Gamma,\mu_\Lambda)$ and for every $n \ge 0$, the induced homomorphism
\[
\sigma_* : H^{(\infty)}_n(\Gamma) \to H^{(\infty)}_n(\Lambda)
\]
is an isomorphism.
\end{thm}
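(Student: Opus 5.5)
The plan is to show that the transpose coupling $\sigma^{-1}$ induces an inverse to $\sigma_*$. Let $\sigma^{-1} \in \Pi(\mu_\Lambda,\mu_\Gamma)$ be the push-forward of $\sigma$ under the transpose map $\Gamma\times\Lambda \ni (x,y)\mapsto (y,x)\in\Lambda\times\Gamma$. As in the proof of Proposition \ref{prop:m-coarse-equiv}, $\sigma^{-1}$ is a measured coarse correspondence between $\Lambda$ and $\Gamma$, so it induces a chain map $(\sigma^{-1})_\sharp$ (Lemma \ref{lem:sigma-sharp-op-prime}, Proposition \ref{prop:bdy-sigma-sharp}) and a homomorphism $(\sigma^{-1})_* : H^{(\infty)}_n(\Lambda)\to H^{(\infty)}_n(\Gamma)$. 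By Corollary \ref{cor:comp-star},
\[
(\sigma^{-1})_*\sigma_* = (\sigma^{-1}\circ\sigma)_*,\qquad \sigma_*(\sigma^{-1})_* = (\sigma\circ\sigma^{-1})_*.
\]
It therefore suffices to show that $(\sigma^{-1}\circ\sigma)_*$ is the identity on $H^{(\infty)}_n(\Gamma)$, and symmetrically for $\sigma\circ\sigma^{-1}$ on $\Lambda$.

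Let $\delta_\Gamma := (\id_\Gamma,\id_\Gamma)_\bullet\mu_\Gamma$ be the diagonal coupling. By the proof of Proposition \ref{prop:m-coarse-equiv} it is a measured coarse correspondence. Its disintegration is $(\delta_\Gamma)_x=\delta_x$, so $(\delta_\Gamma)_\sharp\nu = \int \delta_{\mathbf{x}}\,d\nu(\mathbf{x}) = \nu$, i.e.\ $(\delta_\Gamma)_\sharp=\id$. By Lemma \ref{lem:equiv-star}, the theorem then reduces to checking that $\sigma^{-1}\circ\sigma$ and $\delta_\Gamma$ are equivalent measured coarse correspondences. In other words, we need $\supp(\sigma^{-1}\circ\sigma)\cup\Delta_\Gamma$ to be bi-bornologous, where $\Delta_\Gamma$ denotes the diagonal.

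This equivalence is the only step with content. By Lemma \ref{lem:supp-tau-sigma}, $\supp(\sigma^{-1}\circ\sigma)\subset \overline{S^{-1}\circ S}$ with $S=\supp\sigma$. Since $\Gamma$ is discrete, we can work with $S^{-1}\circ S$ directly; being bi-bornologous is in any case preserved under closure.

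First we bound $S^{-1}\circ S$ near the diagonal. If $(x,x')\in S^{-1}\circ S$, there is $y$ with $(x,y),(x',y)\in S$. Applying bornologousness of $S^{-1}$ with $r=0$ gives $d_\Gamma(x,x')\le R_0$ for a uniform constant $R_0$. Hence $S^{-1}\circ S\cup\Delta_\Gamma$ lies within the $R_0$-neighborhood of the diagonal.

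Next, any subset of $\Gamma\times\Gamma$ lying in the $R_0$-neighborhood of the diagonal is bi-bornologous. Indeed, if $(x_1,x_1'),(x_2,x_2')$ lie in it and $d(x_1,x_2)\le r$, then $d(x_1',x_2')\le r+2R_0$, and the same holds for the inverse.

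It follows that $\sigma^{-1}\circ\sigma$ is equivalent to $\delta_\Gamma$. By Lemma \ref{lem:equiv-star},
\[
(\sigma^{-1}\circ\sigma)_*=(\delta_\Gamma)_*=\id.
\]
Symmetrically, $(\sigma\circ\sigma^{-1})_*=\id$, using that $S$ itself is bornologous. Hence $\sigma_*$ is an isomorphism with inverse $(\sigma^{-1})_*$.
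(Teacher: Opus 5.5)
Your proof is correct and follows essentially the paper's route: you use the transpose coupling $\sigma^{-1}$, reduce via Corollary~\ref{cor:comp-star} to showing that $\sigma^{-1}\circ\sigma$ (and symmetrically $\sigma\circ\sigma^{-1}$) is equivalent to the diagonal coupling, and conclude with Lemmas~\ref{lem:supp-tau-sigma} and~\ref{lem:equiv-star}. The only difference is cosmetic: you verify the equivalence by placing $\supp\sigma^{-1}\circ\supp\sigma$ in a bounded neighborhood of the diagonal, whereas the paper combines Lemma~\ref{lem:comp-bornologous} with the inclusion of the diagonal in $\supp\sigma^{-1}\circ\supp\sigma$ and transitivity of equivalence.
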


\begin{proof}
Let $I_\Gamma := (\id_\Gamma,\id_\Gamma)_\bullet\mu_\Gamma$.
Then $I_\Gamma$ is a measured coarse correspondence between $\Gamma$ and itself and satisfies
$\supp I_\Gamma = \{\,(x,x) \mid x \in \Gamma\,\}$.

We prove that $\sigma^{-1} \circ \sigma$ is equivalent to $I_\Gamma$ as measured coarse correspondence.
By Lemma \ref{lem:supp-tau-sigma},
$\supp(\sigma^{-1} \circ \sigma) \subset \supp\sigma^{-1} \circ \supp\sigma$, where any set is closed because of discreteness of the space.
By Lemma \ref{lem:comp-bornologous}, $\supp\sigma^{-1} \circ \supp\sigma$ is bi-bornologous and hence a coarse correspondence.
Therefore, $\supp(\sigma^{-1}\circ\sigma)$ and $\supp\sigma^{-1} \circ \supp\sigma$ are equivalent as coarse correspondences.
Since $\supp I_\Gamma \subset \supp\sigma^{-1} \circ \supp\sigma$, they are equivalent as coarse correspondences and hence
$\supp(\sigma^{-1}\circ\sigma)$ and $\supp I_\Gamma$ are equivalent.
Therefore, $\sigma^{-1} \circ \sigma$ and $I_\Gamma$ are equivalent as measured coarse correspondences.

By Corollary \ref{cor:comp-star} and Lemma \ref{lem:equiv-star},
\[
(\sigma^{-1})_* \sigma_* = (\sigma^{-1} \circ \sigma)_*
= (I_\Gamma)_* = \id_{H^{(\infty)}_n(\Gamma)}.
\]
In the same way,
\[
\sigma_*(\sigma^{-1})_* = \id_{H^{(\infty)}_n(\Lambda)}.
\]
Therefore, $\sigma_* : H^{(\infty)}_n(\Gamma) \to H^{(\infty)}_n(\Lambda)$
is an isomorphism.
\end{proof}

\begin{defn}[Large-scale doubling condition] \label{defn:ls-doubling}
We say that an mm-space $X$ satisfies the \emph{large-scale doubling condition} if there exists $r_0 > 0$ such that, for each $r \ge r_0$, there exists a constant $C_r>0$ satisfying
\[
\mu_X(B_{2r}(x)) \le C_r\,\mu_X(B_r(x))
\]
for every $x\in X$.

An mm-space satisfying the large-scale doubling condition is called a \emph{large-scale doubling mm-space}.
\end{defn}

The large-scale doubling condition is equivalent to the following seemingly stronger condition:
there exists $r_0 > 0$ such that for every $R \ge r \ge r_0$, there exists a constant $C_{r,R} > 0$ satisfying
\begin{equation} \label{eq:CrR}
\mu_X(B_R(x)) \le C_{r,R} \, \mu_X(B_r(x))
\end{equation}
for every $x \in X$.
This follows by iterating the large-scale doubling condition.

By the Bishop-Gromov volume comparison theorem,
a complete Riemannian manifold with Ricci curvature bounded from below satisfies the large-scale doubling condition.
Moreover, every finitely generated group equipped with the word metric and the counting measure is large-scale doubling.

The following lemma shows that every large-scale doubling mm-space admits a measured coarsely equivalent BGR discretization.

\begin{lem} \label{lem:doubling-Gamma}
Let $X$ be a large-scale doubling mm-space.  Then, there exist a discrete net $\Gamma \subset X$ and a Borel measurable nearest point projection $\pi : X \to \Gamma$ such that if we equip $\Gamma$ with the metric $d_\Gamma := d_X|_{\Gamma^2}$ and the measure $\mu_\Gamma := \pi_\bullet\mu_X$, then the mm-space $\Gamma$ satisfies condition BGR and is measured coarsely equivalent to $X$.
\end{lem}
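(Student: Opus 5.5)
We take $\Gamma$ to be a maximal $r$-separated subset of $X$ for a fixed $r \ge r_0$, where $r_0$ is the large-scale doubling threshold. Such a set exists by Zorn's lemma and is countable by separability, say $\Gamma = \{x_1,x_2,\dots\}$. By maximality, $\Gamma$ is an $r$-net.

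\textbf{The projection.} We define $\pi$ via a Borel Voronoi partition. Set
\[
V_k := \{\,x \in X \mid d_X(x,x_k) \le d_X(x,x_j) \text{ for all } j\,\} \setminus \bigcup_{j<k} V_j,
\]
and let $\pi(x) := x_k$ for $x \in V_k$. The $V_k$ are defined using countably many Borel conditions, so each is Borel. Every $x$ has a nearest point in $\Gamma$: only finitely many net points lie in $B_{2r}(x)$ by the separation and bounded measure argument below, and at least one does. Hence the $V_k$ cover $X$, and $\pi$ is a Borel nearest point projection with $d_X(x,\pi(x)) \le r$. Moreover $V_k \supset B_{r/2}(x_k)$ by $r$-separation, so
\[
\mu_\Gamma(x_k) = \mu_X(V_k) \ge \mu_X(B_{r/2}(x_k)) > 0
\]
by full support, and $\mu_\Gamma(x_k) \le \mu_X(B_r(x_k)) < \infty$.

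\textbf{Condition BR and condition BG.} For BR, let $d(x,y) \le R$ with $x,y \in \Gamma$. Then
\[
\mu_\Gamma(y) \le \mu_X(B_r(y)) \le \mu_X(B_{R+r}(x)) \le C\,\mu_X(B_{r/2}(x)) \le C\,\mu_\Gamma(x).
\]
This uses \eqref{eq:CrR}. To apply it at radius $r/2$, we choose $r \ge 2r_0$ from the start. Symmetry in $x,y$ gives BR. For BG, the balls $B_{r/2}(y)$ with $y \in B_R(x)\cap\Gamma$ are disjoint and contained in $B_{R+r}(x)$. Each satisfies
\[
\mu_X(B_{r/2}(y)) \ge C^{-1}\mu_X(B_{2R+r}(y)) \ge C^{-1}\mu_X(B_{R+r}(x)).
\]
Summing over these balls gives $\#(B_R(x)\cap\Gamma) \le C$. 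The same argument yields the local finiteness used in the construction of $\pi$.

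\textbf{Measured coarse equivalence.} Take $\sigma := (\id_X,\pi)_\bullet\mu_X$. Its marginals are $\mu_X$ and $\pi_\bullet\mu_X = \mu_\Gamma$. Its support lies in the closure of $\graph\pi$, which is contained in $\{(x,y) \mid d(x,y) \le r\}$. This set is bi-bornologous: if $d(x,y),\,d(x',y') \le r$, then $d(x,x')$ and $d(y,y')$ differ by at most $2r$. So $\sigma$ is a measured coarse correspondence. Note that the closure of the graph is harmless here, since the relevant closeness condition is closed.

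\textbf{Main obstacle.} The delicate step is the uniform lower bound $\mu_\Gamma(x) \gtrsim \mu_X(B_{R}(x))$. It requires the cells to contain balls of radius at least $r_0$, which is why we scale the separation to $r \ge 2r_0$. Borel measurability of $\pi$ is the only other technical point, and it is handled by the ordered Voronoi construction above.
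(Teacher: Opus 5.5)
Your proof is correct and follows the paper's argument essentially step for step. Both take a maximal separated net at scale $2r_0$ (your $r\ge 2r_0$, with balls of radius $r/2\ge r_0$ playing the role of the paper's $B_{r_0}$), derive BG and BR from disjoint balls and the iterated doubling bound \eqref{eq:CrR}, and use the coupling $(\id_X,\pi)_\bullet\mu_X$. The only real difference is that you build the Borel nearest point projection explicitly by an ordered Voronoi partition, using local finiteness to guarantee that nearest points exist, where the paper cites \cite{Sy:mmg}. One small point: take the separation to be strict ($d_X(x,y)>r$ for distinct net points), as the paper does. Otherwise the closed balls $B_{r/2}(x_k)$ need not be disjoint and need not lie inside the cells $V_k$.
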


\begin{proof}
Let $r_0$ be the constant as in \eqref{eq:CrR}.
Let $\Gamma \subset X$ be a maximal $2r_0$-discrete set, i.e., $\Gamma$ is maximal with respect to inclusion among subsets such that any two distinct points have distance greater than $2r_0$.
Then, $B_{2r_0}(\Gamma) = X$, i.e., $\Gamma$ is a net.
There exists a Borel measurable nearest point projection $\pi : X \to \Gamma$ (cf.~the proof of \cite{Sy:mmg}*{Lemma 3.4}).  Since $(\id_X,\pi)_\bullet\mu_X$ is a measured coarse correspondence between $X$ and $\Gamma$, we have $X \simeq_\mce \Gamma$.
It remains to prove condition BGR for $\Gamma$.

We verify condition BG.  For every $r > 0$ and $x \in X$,
the balls $B_{r_0}(y)$, $y \in B_r(x) \cap \Gamma$, are mutually disjoint and contained in $B_{r+r_0}(x)$.
Hence,
\[
\sum_{y \in B_r(x) \cap \Gamma} \mu_X(B_{r_0}(y)) \le \mu_X(B_{r+r_0}(x)).
\]
Let $C$ be the constant $C_{r_0,2r+r_0}$ as in \eqref{eq:CrR}.
For every $y \in B_r(x) \cap \Gamma$,
\[
\mu_X(B_{r+r_0}(x)) \le \mu_X(B_{2r+r_0}(y)) \le C\mu_X(B_{r_0}(y)).
\]
Therefore, $\#(B_r(x) \cap \Gamma) \le C$, which proves condition BG.

We verify BR.  For every $x \in \Gamma$, since $B_{r_0}(x) \subset \pi^{-1}(x) \subset B_{2r_0}(x)$, we have
\[
\mu_X(B_{r_0}(x)) \le \mu_\Gamma(x) \le \mu_X(B_{2r_0}(x)).
\]
For every $r > 0$ and $x,y \in \Gamma$ with $d_\Gamma(x,y) \le r$,
\[
\mu_\Gamma(x) \le \mu_X(B_{2r_0}(x)) \le \mu_X(B_{r+2r_0}(y)).
\]
Setting $C' := C_{r_0,r+2r_0}$, we have
\[
\mu_X(B_{r+2r_0}(y)) \le C' \mu_X(B_{r_0}(y)) \le C' \mu_\Gamma(y).
\]
Therefore, $\mu_\Gamma(x) \le C' \mu_\Gamma(y)$.
This completes the proof.
\end{proof}

\begin{defn}
Let $X$ be an mm-space and $\Gamma$ a discrete mm-space.
We call $\Gamma$ a \emph{discretization of $X$} if $\Gamma$ is measured coarse equivalent to $X$.
\end{defn}

\begin{defn}[Weighted $\ell^\infty$ homology]
Let $X$ be a large-scale doubling mm-space.
By Lemma \ref{lem:doubling-Gamma}, there exists a BGR discretization $\Gamma$ of $X$.
By Theorem \ref{thm:c-eq-iso}, the weighted $\ell^\infty$ homology group $H^{(\infty)}_n(\Gamma)$ is independent of the choice of discretization $\Gamma$.
We define the \emph{weighted $\ell^\infty$ homology group of $X$} by
\[
H^{(\infty)}_n(X) := H^{(\infty)}_n(\Gamma).
\]
\end{defn}

Theorem \ref{thm:main1} is restated as follows.

\begin{thm}
Let $X$ and $Y$ be large-scale doubling mm-spaces.
If $X \simeq_\wmce Y$, then
\[
H^{(\infty)}_n(X) \cong H^{(\infty)}_n(Y)
\]
for every $n \ge 0$.
\end{thm}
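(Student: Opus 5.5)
The plan is to reduce the statement to Theorem \ref{thm:c-eq-iso} together with Remark \ref{rem:approx-Hme}. Assume $X \simeq_\wmce Y$. By definition there are $\nu_X \asymp \mu_X$ and $\nu_Y \asymp \mu_Y$ with $(X,\nu_X) \simeq_\mce (Y,\nu_Y)$.

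First, $(X,\nu_X)$ and $(Y,\nu_Y)$ are again large-scale doubling mm-spaces. Indeed, $\asymp$ preserves the doubling inequality (with constant multiplied by $C^2$), full support, and bounded finiteness. Apply Lemma \ref{lem:doubling-Gamma} to $X$ with $\mu_X$ to get a net $\Gamma$, a Borel nearest point projection $\pi$, and $\mu_\Gamma = \pi_\bullet\mu_X$. We will use the same net and projection for $\nu_X$ as well, so set $\nu_\Gamma := \pi_\bullet \nu_X$. Then $\nu_\Gamma \asymp \mu_\Gamma$ with the same constant. Hence $(\Gamma,\nu_\Gamma)$ satisfies BG (a purely metric condition) and BR (ratios change by at most $C^2$). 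Moreover $(\id_X,\pi)_\bullet\nu_X$ is a measured coarse correspondence, because its support lies in the graph of $\pi$, which is close to the identity. So $(\Gamma,\nu_\Gamma)$ is a BGR discretization of $(X,\nu_X)$. Remark \ref{rem:approx-Hme} then gives
\[
H^{(\infty)}_n(X) = H^{(\infty)}_n(\Gamma,\mu_\Gamma) = H^{(\infty)}_n(\Gamma,\nu_\Gamma).
\]
Do the same on the $Y$ side to obtain $(\Lambda,\nu_\Lambda)$ with $H^{(\infty)}_n(Y) = H^{(\infty)}_n(\Lambda,\nu_\Lambda)$.

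Next, chain the measured coarse equivalences
\[
(\Gamma,\nu_\Gamma) \simeq_\mce (X,\nu_X) \simeq_\mce (Y,\nu_Y) \simeq_\mce (\Lambda,\nu_\Lambda).
\]
By transitivity (Proposition \ref{prop:m-coarse-equiv}, via Lemma \ref{lem:comp-mcecorr}), $(\Gamma,\nu_\Gamma) \simeq_\mce (\Lambda,\nu_\Lambda)$. Both are BGR discrete mm-spaces, so Theorem \ref{thm:c-eq-iso} yields $H^{(\infty)}_n(\Gamma,\nu_\Gamma) \cong H^{(\infty)}_n(\Lambda,\nu_\Lambda)$, which gives the claim.

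The only delicate point is that the weighted homology of $X$ was defined using a discretization of $(X,\mu_X)$, not of $(X,\nu_X)$. This is resolved above by pushing both measures forward through the same projection $\pi$, so that Remark \ref{rem:approx-Hme} applies on a single discrete space. Two routine checks remain: $\pi_\bullet$ preserves $\asymp$, and the composition of couplings, which passes through the non-discrete spaces $X$ and $Y$, is still a coupling whose support is bi-bornologous. Lemmas \ref{lem:supp-tau-sigma} and \ref{lem:comp-mcecorr} already handle the latter for general mm-spaces.
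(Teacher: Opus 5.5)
Your proof is correct and follows the paper's argument essentially verbatim. You push both $\mu_X$ and $\nu_X$ through the same nearest-point projection, use Remark~\ref{rem:approx-Hme} to identify the homologies of $(\Gamma,\mu_\Gamma)$ and $(\Gamma,\nu_\Gamma)$, chain the measured coarse equivalences through $(X,\nu_X)$ and $(Y,\nu_Y)$, and apply Theorem~\ref{thm:c-eq-iso}. Your explicit checks that $(\Gamma,\nu_\Gamma)$ is BGR and that $(\id_X,\pi)_\bullet\nu_X$ is a measured coarse correspondence supply details the paper leaves implicit.
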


\begin{proof}
There exist measures $\nu_X \in \cM_+(X)$ and $\nu_Y \in \cM_+(Y)$ such that $\mu_X \asymp \nu_X$, $\mu_Y \asymp \nu_Y$ and $(X,\nu_X) \simeq_\mce (Y,\nu_Y)$.
Choose the same discretization $\Gamma$ of $X$ for both
$(X,\mu_X)$ and $(X,\nu_X)$.
Let
$\mu_\Gamma:=\pi_\bullet\mu_X$ and $\nu_\Gamma:=\pi_\bullet\nu_X$,
where $\pi : X \to \Gamma$ is the associated nearest point map.
Since $\mu_X\asymp\nu_X$, we have $\mu_\Gamma\asymp\nu_\Gamma$.
Hence $H^{(\infty)}_n(\Gamma,\mu_\Gamma) = H^{(\infty)}_n(\Gamma,\nu_\Gamma)$.
Let $\Lambda$ be the same discretization of $Y$ for $(Y,\mu_Y)$ and $(Y,\nu_Y)$ and define $\mu_\Lambda$ and $\nu_\Lambda$ in the same manner.  Then, $H^{(\infty)}_n(\Lambda,\mu_\Lambda) = H^{(\infty)}_n(\Lambda,\nu_\Lambda)$.
Since 
\[
(\Gamma,\nu_\Gamma) \simeq_\mce (X,\nu_X) \simeq_\mce (Y,\nu_Y) \simeq_\mce (\Lambda,\nu_\Lambda),
\]
we have $H^{(\infty)}_n(\Gamma,\nu_\Gamma) \cong H^{(\infty)}_n(\Lambda,\nu_\Lambda)$.
Thus,
\[
H^{(\infty)}_n(X) \cong H^{(\infty)}_n(\Gamma,\mu_\Gamma) = H^{(\infty)}_n(\Gamma,\nu_\Gamma) \cong H^{(\infty)}_n(\Lambda,\nu_\Lambda) = H^{(\infty)}_n(\Lambda,\mu_\Lambda) \cong H^{(\infty)}_n(Y).
\]
This completes the proof.
\end{proof}

\section{Weighted Amenability}
\label{sec:amenable}

In this section, we study weighted amenability and prove Theorem \ref{thm:main2}.

\subsection{From Vanishing to Weighted Non-amenability}
In this subsection, we prove that the vanishing of the zeroth weighted $\ell^\infty$ homology implies weighted non-amenability.

\begin{defn}[Weighted amenability] \label{defn:amenable}
Let $X$ be an mm-space.
For a set $\Omega \subset X$ and $r > 0$, we define
\[
\partial_r \Omega := B_r(\Omega) \cap B_r(\Omega^c),
\]
where $B_r(\Omega) := \{\,x \in X \mid d_X(x,\Omega) \le r\,\}$, $d_X(x,\Omega) := \inf_{y \in \Omega} d(x,y)$.
$X$ is said to be \emph{weighted amenable} if for every $r > 0$ there exists a sequence $\{\Omega_n\}_{n=1}^\infty$ of bounded Borel sets in $X$ such that $\mu_X(\Omega_n) > 0$ and
\[
\lim_{n\to\infty} \frac{\mu_X(\partial_r \Omega_n)}{\mu_X(\Omega_n)} = 0.
\]
We call such a sequence $\{\Omega_n\}$ an \emph{$r$-F\o lner sequence}.
\end{defn}

The weighted non-amenability of $X$ is equivalent to the following isoperimetric inequality:
There exist $r, C > 0$ such that
\[
\mu_X(\Omega) \le C \mu_X(\partial_r\Omega)
\]
for every bounded Borel set $\Omega \subset X$.

\begin{rem} \label{rem:BG-amenable}
Let $\Gamma$ be a discrete mm-space satisfying condition BG and with the counting measure $\mu_\Gamma = \sum_{x \in \Gamma} \delta_x$ as the underlying measure.
Then, $\Gamma$ is weighted amenable if and only if it is amenable as a metric space.
\end{rem}

\begin{prop} \label{prop:c-eq-am}
Let $X$ and $Y$ be mm-spaces such that $X \simeq_\wmce Y$.
If $X$ is weighted amenable then so is $Y$.
\end{prop}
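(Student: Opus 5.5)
Two reductions come first. Weighted amenability is invariant under replacing $\mu_X$ by a comparable measure $\nu_X \asymp \mu_X$: the ratio $\mu_X(\partial_r\Omega)/\mu_X(\Omega)$ changes by at most a factor $C^2$. Weakly measured coarse equivalence is symmetric (Proposition \ref{prop:wmce-equiv}). So it suffices to prove the statement when $X \simeq_\mce Y$ via a measured coarse correspondence $\sigma \in \Pi(\mu_X,\mu_Y)$ with $S := \supp\sigma$ bi-bornologous and boundedly dense.

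Fix $r>0$. I will produce an $r$-F\o lner sequence in $Y$ from a suitable $r'$-F\o lner sequence $\{\Omega_n\}$ in $X$. The natural candidate is a Borel set playing the role of $S[\Omega_n]$. Since $S[\Omega_n]$ need not be Borel, I will instead measure it through $\sigma$. Set $U_n := \{\, y \in Y \mid \sigma_y(\Omega_n) \ge 1/2 \,\}$, where $\{\sigma_y\}$ is the disintegration with respect to $p_Y$; this set is Borel.

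Boundedness of $U_n$ comes from the bornologous property. If $\sigma_y(\Omega_n)>0$, then $y \in S[\overline{\Omega_n}]$ for $\mu_Y$-a.e.\ such $y$, and that set is bounded. I will restrict $U_n$ to $S[\overline{\Omega_n}]$, or to its closed neighborhood, to make this hold everywhere.

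\textbf{Mass comparison.} For the lower bound, $\mu_X(\Omega_n) = \int_Y \sigma_y(\Omega_n)\,d\mu_Y(y) \le \mu_Y(U_n) + \tfrac12\mu_Y(\{0<\sigma_y(\Omega_n)<1/2\})$. The set $\{0<\sigma_y(\Omega_n)<1\}$ consists (a.e.) of points $y$ whose fiber meets both $\Omega_n$ and $\Omega_n^c$. Since $S^{-1}$ is bornologous, each such fiber has diameter $\le R_0$, so such a $y$ is related to a point of $\partial_{R_0}\Omega_n$. Hence $\mu_Y(\{0<\sigma_y(\Omega_n)<1\}) \le \sigma(\partial_{R_0}\Omega_n \times Y) = \mu_X(\partial_{R_0}\Omega_n)$. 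For the upper bound, $\mu_Y(U_n) \le 2\mu_X(\Omega_n)$ by Chebyshev. Together these give $\mu_Y(U_n) \ge \mu_X(\Omega_n) - \mu_X(\partial_{R_0}\Omega_n)$, which is positive for large $n$.

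\textbf{Boundary estimate.} Let $y \in \partial_r U_n$, so there are $y_1 \in U_n$ and $y_2 \in U_n^c$ within distance $r$ of $y$. I will pick $x$ with $(x,y) \in S$; the fiber of $y$ is nonempty for a.e.\ $y$, and bounded density handles the general case. The set $S^{-1}$ is bornologous, so $x$ is within some $R_1(r)$ of points $x_1 \in \Omega_n$ related to $y_1$ and $x_2 \in \Omega_n^c$ related to $y_2$. These exist because $\sigma_{y_1}(\Omega_n) \ge 1/2$ and $\sigma_{y_2}(\Omega_n)<1/2$. Hence $x \in \partial_{R_1}\Omega_n$. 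Therefore $\mu_Y(\partial_r U_n) \le \sigma(\partial_{R_1}\Omega_n \times Y) = \mu_X(\partial_{R_1}\Omega_n)$, up to the null sets from the disintegration. Choosing $r' := \max(R_0,R_1)$ and $\{\Omega_n\}$ an $r'$-F\o lner sequence gives
\[
\frac{\mu_Y(\partial_r U_n)}{\mu_Y(U_n)} \le \frac{\mu_X(\partial_{r'}\Omega_n)}{\mu_X(\Omega_n)-\mu_X(\partial_{r'}\Omega_n)} \to 0 .
\]

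The main obstacle is the measure-theoretic bookkeeping. Statements like ``$\sigma_y$ is supported in $S^{-1}[\{y\}]$'' hold only for $\mu_Y$-a.e.\ $y$, and sets of the form $S[A]$ are not Borel. To handle this, I will define every set via $\sigma$-integrals or via closures, which are Borel because $S$ is closed and $X,Y$ are separable. I will then discard a $\mu_Y$-null set once, at the outset, using the disintegration property $\sigma_y((S^{-1}[\{y\}])^c)=0$ for a.e.\ $y$ (Lemma \ref{lem:supp-sigma} in the discrete case, and its a.e.\ analogue in general).
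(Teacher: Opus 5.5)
There is a genuine gap in the boundary estimate, and it sits in the bookkeeping you postponed; your reductions and the mass comparison are fine. The set $U_n=\{y\mid\sigma_y(\Omega_n)\ge 1/2\}$ depends on the version of the disintegration, which is determined only up to a $\mu_Y$-null set. But $\partial_rU_n$ is not stable under null modifications of $U_n$.

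In your boundary step, $y_1\in U_n$ and $y_2\in U_n^c$ are arbitrary points near $y$, not $\mu_Y$-generic ones. So $\sigma_{y_2}(\Omega_n)<1/2$ does not give any $x_2\in\Omega_n^c$ with $(x_2,y_2)\in S$. ``Discarding a null set'' cannot fix this. Every point of $Y$ still lies in $U_n$ or in $U_n^c$ and affects $\partial_rU_n$. Moreover, points of $Y\setminus p_Y(S)$ are related to nothing, so no version has the fiber property at every point.

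Concretely, let $X=Y=\R$ with Lebesgue measure, $\sigma=(\id,\id)_\bullet dx$ and $\Omega_n=[-n,n]$. Use the admissible version $\sigma_y=\delta_y$ for $y\notin\mathbb{Q}$ and $\sigma_{q_k}=\delta_k$, where $\{q_k\}_{k\ge1}$ enumerates $\mathbb{Q}$. Then $U_n^c$ contains all $q_k\in[-n,n]$ with $k>n$, a dense subset of $[-n,n]$. Hence $\partial_rU_n\supset[-n,n]$ and $\mu_Y(\partial_rU_n)\ge\mu_Y(U_n)=2n$ for every $n$ and every $r>0$. Intersecting with $\overline{S[\Omega_n]}=[-n,n]$ changes nothing.

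Your route can be repaired by choosing a version with a pointwise witness property:
\begin{itemize}
\item Let $N$ be the Borel $\mu_Y$-null set of $y$ with $\sigma_y(\{x\mid(x,y)\notin S\})>0$.
\item Fix a countable dense subset $\{(x_k,y_k)\}$ of $S$.
\item For $y\in N$, redefine $\sigma_y:=\delta_{x_{k(y)}}$, where $k(y)$ is the least $k$ with $d_Y(y,y_k)<1$; it exists because $p_Y(S)$ is dense.
\end{itemize}
Then for every $y\in Y$, the measure $\sigma_y$ is concentrated on points $x$ with $(x,y')\in S$ for some $y'$ satisfying $d_Y(y,y')\le1$. This supplies $x_1\in\Omega_n$ and $x_2\in\Omega_n^c$ for your $y_1,y_2$, and it gives boundedness of $U_n$ directly. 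Your estimate then goes through with $R_1:=R(r+2)$, where $R$ is the bornology function of $S^{-1}$.

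The paper avoids disintegration altogether. It takes the closed bounded set $V_n:=\overline{S[\Omega_n]}$ and gets $\mu_Y(V_n)\ge\mu_X(\Omega_n)$ from $(\Omega_n\times Y)\cap S\subset\Omega_n\times V_n$. It then proves the set inclusion $S^{-1}[\partial_rV_n]\subset\partial_{r'}\Omega_n$ with $r'=R(2r)$. There the witnesses come from topology: points of $V_n$ are approximated from $S[\Omega_n]$, and points of the open set $V_n^c$ are perturbed into $V_n^c\cap p_Y(S)$, whose related points lie in $\Omega_n^c$. This yields $\mu_Y(\partial_rV_n)/\mu_Y(V_n)\le\mu_X(\partial_{r'}\Omega_n)/\mu_X(\Omega_n)$ with no subtracted error term.
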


\begin{proof}
First observe that if $\nu_X \asymp \mu_X$, then
$X$ is weighted amenable if and only if $(X,\nu_X)$ is weighted amenable.
Thus, it suffices to prove the proposition in the case where $X \simeq_\mce Y$.
Let $\sigma$ be a measured coarse correspondence between $X$ and $Y$.
For any given $r > 0$, we set $r' := R(2r)$, where $R(\cdot)$ is in the definition of bornologous for $S = (\supp\sigma)^{-1}$.
Assume that $X$ is weighted amenable.
Then there is an $r'$--F\o lner sequence $\{U_i\}$.
Let $V_i := \overline{(\supp\sigma)[U_i]} \subset Y$.
Since $(U_i \times V_i) \cap \supp\sigma \supset (U_i \times Y) \cap \supp\sigma$,
\begin{equation} \label{eq:ViUi}
\mu_Y(V_i) = \sigma(X \times V_i)
\ge \sigma(U_i \times V_i) = \sigma(U_i \times Y) = \mu_X(U_i).
\end{equation}

We next prove
\begin{equation} \label{eq:supp-bdy}
(\supp\sigma)^{-1}[\partial_r V_i] \subset \partial_{r'}U_i.
\end{equation}
Take an arbitrary $x \in (\supp\sigma)^{-1}[\partial_r V_i]$.
There is $y \in \partial_r V_i$ such that $(x,y) \in \supp\sigma$.
There are $y_+ \in V_i$ and $y_- \in V_i^c$ such that $d_Y(y,y_+) \le 3r/2$ and $d_Y(y,y_-) \le 3r/2$.
By the definition of $V_i$, there is $y_+' \in (\supp\sigma)[U_i]$ such that $d_Y(y_+,y_+') \le r/2$.  Hence $d_Y(y,y_+') \le 2r$.
Since $V_i^c$ is open and $(\supp\sigma)[X]$ is dense in $Y$,
there is $y_-' \in V_i^c \cap (\supp\sigma)[X]$ such that $d_Y(y_-,y_-') \le r/2$.
Hence $d_Y(y,y_-') \le 2r$.
There is $x_+ \in U_i$ such that $(x_+,y_+') \in \supp\sigma$.
Since $d_Y(y,y_+') \le 2r$,
we have $d_X(x,x_+) \le R(2r) = r'$.  Hence, $x\in B_{r'}(U_i)$.
There is $x_- \in X$ such that $(x_-,y_-') \in \supp\sigma$.
Since $y_-' \notin (\supp\sigma)[U_i]$, we have $x_- \in U_i^c$.
Since $d_X(x,x_-) \le R(2r) = r'$, we see $x\in B_{r'}(U_i^c)$.
Thus, $x \in \partial_{r'}U_i$.  This proves \eqref{eq:supp-bdy}.

Since \eqref{eq:supp-bdy} implies
$(X \times \partial_rV_i) \cap \supp\sigma
= ((\supp\sigma)^{-1}[\partial_rV_i] \times \partial_rV_i) \cap \supp\sigma
\subset \partial_{r'}U_i \times Y$, we have
\[
\mu_Y(\partial_rV_i) = \sigma(X \times \partial_rV_i)
\le \sigma(\partial_{r'}U_i \times Y) = \mu_X(\partial_{r'}U_i).
\]
This together with \eqref{eq:ViUi} yields
\[
\frac{\mu_Y(\partial_rV_i)}{\mu_Y(V_i)}
\le \frac{\mu_X(\partial_{r'}U_i)}{\mu_X(U_i)},
\]
which converges to $0$ because $\{U_i\}$ is an $r'$--F\o lner sequence.
Thus, $\{V_i\}$ is an $r$-F\o lner sequence and
$Y$ is weighted amenable.
\end{proof}

\begin{lem} \label{lem:nonneg-chain}
Let $\Gamma$ be a BGR discrete mm-space.
Then, for every $\psi \in C^{(\infty)}_1(\Gamma)$, there exists $\psi'  \in C^{(\infty)}_1(\Gamma)$ such that $\psi' \ge 0$ and $\partial\psi = \partial\psi'$.
\end{lem}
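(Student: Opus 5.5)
The plan is to replace $\psi$ by a reflected version: every point mass with negative weight is moved to the reversed pair with positive weight. Write $\psi = \sum_{(x,y)\in\Gamma^2} \psi(x,y)\,\delta_{(x,y)}$. For a $1$-chain, $\partial\delta_{(x,y)} = \delta_y - \delta_x$. Hence the transposed point mass $\delta_{(y,x)}$ satisfies $\partial\delta_{(y,x)} = -\partial\delta_{(x,y)}$. Let $t:\Gamma^2\to\Gamma^2$ be the transpose map $t(x,y)=(y,x)$, and set
\[
\psi' := \psi^+ + t_\bullet \psi^-,
\]
where $\psi=\psi^+-\psi^-$ is the Jordan decomposition. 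Then $\psi'\ge 0$ is immediate.

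The first step is to check $\partial\psi'=\partial\psi$. We have $p_{\hat 0}\circ t = p_{\hat 1}$ and $p_{\hat 1}\circ t = p_{\hat 0}$. Hence
\[
\partial(t_\bullet\eta) = (p_{\hat 0})_\bullet t_\bullet\eta - (p_{\hat 1})_\bullet t_\bullet\eta = (p_{\hat 1})_\bullet\eta - (p_{\hat 0})_\bullet\eta = -\partial\eta
\]
for every $\eta$ for which these push-forwards make sense. Applying this with $\eta=\psi^-$ gives
\[
\partial\psi' = \partial\psi^+ - \partial\psi^- = \partial\psi.
\]
For this computation we need $\psi^\pm$, and therefore $t_\bullet\psi^-$, to lie in $C^{(\infty)}_1(\Gamma)$, so that Lemma \ref{lem:Cinfty} applies and the push-forwards are boundedly finite. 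This is what makes the splitting $\partial\psi = \partial\psi^+-\partial\psi^-$ legitimate.

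The second step, which I expect to be the only point needing care, is membership in $C^{(\infty)}_1(\Gamma)$. First, $\psi^\pm$ lie in $C^{(\infty)}_1(\Gamma)$. Since $\Gamma^2$ is discrete, $\psi^\pm(x,y)=\max(\pm\psi(x,y),0)$. Thus $\supp\psi^\pm\subset\supp\psi$, which gives $(\Delta)$. Also $|d\psi^\pm/d\mu_{\Gamma^2}|\le |d\psi/d\mu_{\Gamma^2}|$, which gives $(\ell^\infty)$. Bounded finiteness holds because $|\psi^\pm|\le|\psi|$.

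Next, $t_\bullet\psi^-$ lies in $C^{(\infty)}_1(\Gamma)$. The transpose $t$ is an isometry of $\Gamma^2$ with the $\ell^1$ metric, so bounded finiteness is preserved. The diameter quantity $\max_{i,j}d_\Gamma(x_i,x_j)$ is symmetric in the coordinates, so $(\Delta)$ holds with $\Delta_{t_\bullet\psi^-}=\Delta_{\psi^-}$. Finally, the weight $(\mu_\Gamma(x)\mu_\Gamma(y))^{1/2}$ defining $\mu_{\Gamma^2}$ in \eqref{eq:mu-Gamma-prod} is symmetric, so $t_\bullet\mu_{\Gamma^2}=\mu_{\Gamma^2}$. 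Therefore the density of $t_\bullet\psi^-$ is the density of $\psi^-$ composed with $t$, which has the same sup-norm. Notably, condition BR is not even needed here, thanks to the symmetric exponent $1/(n+1)$.

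Combining the two steps, $\psi'\in C^{(\infty)}_1(\Gamma)$, $\psi'\ge0$, and $\partial\psi'=\partial\psi$.
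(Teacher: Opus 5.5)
Your proof is correct and uses essentially the same idea as the paper: reverse the negatively weighted pairs, since $\partial\delta_{(y,x)}=-\partial\delta_{(x,y)}$. The paper also cancels opposite flows, setting $\psi'(x,y)=(\psi(x,y)-\psi(y,x))^+$ for $x\neq y$ and $\psi'(x,x)=0$, which gives a pointwise smaller chain than your $\psi^+ + t_\bullet\psi^-$ (the difference is symmetric and so has zero boundary); your explicit check of $(\ell^\infty)$ via $t_\bullet\mu_{\Gamma^2}=\mu_{\Gamma^2}$ spells out a membership step that the paper only asserts.
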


\begin{proof}
Every $\psi \in C^{(\infty)}_1(\Gamma)$ is expressed as
\[
\psi = \sum_{(x_0,x_1) \in \Gamma^2} \psi(x_0,x_1) \delta_{(x_0,x_1)}.
\]
For each pair $x,y\in\Gamma$, we define $\psi'(x,y)$ as follows.
\begin{enumerate}
\item
we set $\psi'(x,x) := 0$,
\item
if $x \neq y$ and $\psi(x,y) - \psi(y,x) \ge 0$, then
we set $\psi'(x,y) := \psi(x,y) - \psi(y,x)$ and $\psi'(y,x) := 0$.
\item
if $x \neq y$ and $\psi(x,y) - \psi(y,x) < 0$, then
we set $\psi'(x,y) := 0$ and $\psi'(y,x) := \psi(y,x) - \psi(x,y)$.
\end{enumerate}
Then, we have $\psi' \ge 0$, $\psi' \in C^{(\infty)}_1(\Gamma)$ and
\begin{align*}
\partial\psi &= (p_1)_\bullet\psi - (p_0)_\bullet\psi
= \sum_{(x_0,x_1) \in \Gamma^2} \psi(x_0,x_1) \delta_{x_1}
- \sum_{(x_0,x_1) \in \Gamma^2} \psi(x_0,x_1) \delta_{x_0}\\
&= \sum_{(x,y) \in \Gamma^2} (\psi(x,y) - \psi(y,x)) \delta_y
= \sum_{(x,y) \in \Gamma^2} (\psi'(x,y) - \psi'(y,x)) \delta_y\\
&= \partial\psi'.
\end{align*}
\end{proof}

\begin{prop} \label{prop:H0-non-ame}
Let $\Gamma$ be a BGR discrete mm-space.
If $H^{(\infty)}_0(\Gamma)=0$, then $\Gamma$ is weighted non-amenable.
\end{prop}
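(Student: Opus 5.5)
Assume $H^{(\infty)}_0(\Gamma)=0$. The plan is to write the fundamental class as a boundary, make that boundary nonnegative, and read off an isoperimetric inequality from it.

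Consider $\mu_\Gamma \in C^{(\infty)}_0(\Gamma)$. Since $\Phi_\Gamma\mu_\Gamma\equiv 1$, it is bounded, so $\mu_\Gamma$ is a cycle, and vanishing of $H_0^{(\infty)}$ gives $\psi\in C^{(\infty)}_1(\Gamma)$ with $\partial\psi=\mu_\Gamma$. By Lemma \ref{lem:nonneg-chain} we may assume $\psi\ge 0$. Put $r:=\Delta_\psi<\infty$ and $M:=\|d\psi/d\mu_{\Gamma^2}\|_\infty$. Then $\psi(x_0,x_1)\le M(\mu_\Gamma(x_0)\mu_\Gamma(x_1))^{1/2}$, and $\psi(x_0,x_1)>0$ only when $d_\Gamma(x_0,x_1)\le r$. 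By condition BR this gives
\[
\psi(x_0,x_1)\le M\lambda_r\,\mu_\Gamma(x_1)
\]
for some constant $\lambda_r$. The identity $\partial\psi=\mu_\Gamma$ says that, for each $y$,
\[
\sum_x\psi(x,y)-\sum_x\psi(y,x)=\mu_\Gamma(y),
\]
i.e. $\psi$ is a bounded-range flow whose net inflow at every vertex is $\mu_\Gamma(y)$ (a weighted Ponzi scheme).

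Now take a bounded (hence finite, by BG) set $\Omega\subset\Gamma$ and sum this identity over $y\in\Omega$. Terms with both endpoints in $\Omega$ cancel, leaving
\[
\mu_\Gamma(\Omega)=\sum_{x\notin\Omega,\,y\in\Omega}\psi(x,y)-\sum_{y\in\Omega,\,x\notin\Omega}\psi(y,x)\le\sum_{x\notin\Omega,\,y\in\Omega}\psi(x,y),
\]
using $\psi\ge0$. Every nonzero term has $d(x,y)\le r$ with $y\in\Omega$ and $x\in\Omega^c$, so $y\in\partial_r\Omega$. Bound each such term by $M\lambda_r\mu_\Gamma(y)$; for fixed $y$ there are at most $N_r:=\sup_z\#B_r(z)$ choices of $x$. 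Hence
\[
\mu_\Gamma(\Omega)\le MN_r\lambda_r\,\mu_\Gamma(\partial_r\Omega).
\]
This is the isoperimetric inequality, so every bounded $\Omega$ with $\mu_\Gamma(\Omega)>0$ has $\mu_\Gamma(\partial_r\Omega)/\mu_\Gamma(\Omega)\ge (MN_r\lambda_r)^{-1}$. Thus no $r$-F\o lner sequence exists and $\Gamma$ is weighted non-amenable.

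The only step needing care is the conversion of the $\ell^\infty$ bound on $\psi$, which is relative to the geometric-mean weights, into a bound by $\mu_\Gamma$ at a boundary point. This uses BR at scale $\Delta_\psi$ and is routine. The finiteness of the sums follows from boundedness of $\Omega$ together with BG, which justifies the cancellation.
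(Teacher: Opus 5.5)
Your proposal is correct and follows essentially the same argument as the paper: write $\mu_\Gamma=\partial\psi$ with $\psi\ge0$ via Lemma~\ref{lem:nonneg-chain}, sum over $\Omega$ to get $\mu_\Gamma(\Omega)\le\psi(\Omega^c\times\Omega)\le\psi(\Gamma\times\partial_r\Omega)$, and bound this using BR and BG. The only cosmetic differences are that the paper takes $r>\Delta_\psi$ rather than $r=\Delta_\psi$, and it obtains the slightly sharper constant $M\lambda_r^{1/2}N_r$. Your choice $r=\Delta_\psi$ is fine, since $\partial\psi=\mu_\Gamma\neq0$ forces $\psi$ to charge an off-diagonal point, so $\Delta_\psi>0$.
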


\begin{proof}
Since $\mu_\Gamma\in C^{(\infty)}_0(\Gamma)$ and
$H^{(\infty)}_0(\Gamma)=0$, there exists
$\psi\in C^{(\infty)}_1(\Gamma)$ such that $\partial\psi=\mu_\Gamma$.
By Lemma~\ref{lem:nonneg-chain}, we may assume that $\psi\ge0$.

Take $r>0$ such that $\Delta_\psi<r$.
Put
$M:=\left\|\frac{d\psi}{d\mu_{\Gamma^2}}\right\|_\infty$.
Then, for every $(x,y)\in\Gamma^2$,
\[
\psi(x,y)\le M(\mu_\Gamma(x)\mu_\Gamma(y))^{1/2}.
\]
By condition BR, there exists $\lambda_r\ge1$ such that
$d_\Gamma(x,y) \le r$ implies
\[
\mu_\Gamma(x)\le \lambda_r\mu_\Gamma(y).
\]
By condition BG, set
\[
N_r:=\sup_{y\in\Gamma}\# B_r(y)<\infty.
\]

Let $U\subset\Gamma$ be a bounded Borel set with $\mu_\Gamma(U)>0$.
Since $\partial\psi=\mu_\Gamma$, we have
\begin{align*}
\mu_\Gamma(U) &=\partial\psi(U)
=(p_1)_\bullet\psi(U)-(p_0)_\bullet\psi(U)
=\psi(\Gamma\times U)-\psi(U\times\Gamma)\\
&=\psi(U^c\times U)-\psi(U\times U^c)
\le \psi(U^c\times U).
\end{align*}
Since $\Delta_\psi<r$, if $(x,y)\in\supp\psi\cap(U^c\times U)$, then
$y\in \partial_r U\cap U$. Hence
\[
\psi(U^c\times U)
\le
\psi(\Gamma\times \partial_r U).
\]
Therefore,
\begin{align*}
\mu_\Gamma(U)
&\le
\sum_{y\in\partial_r U} \sum_{x\in B_r(y)} \psi(x,y)
\le
M\sum_{y\in\partial_r U} \sum_{x\in B_r(y)}
(\mu_\Gamma(x)\mu_\Gamma(y))^{1/2}\\
&\le
M\lambda_r^{1/2} \sum_{y\in\partial_r U} \sum_{x\in B_r(y)}
\mu_\Gamma(y)
\le
M\lambda_r^{1/2}N_r\,\mu_\Gamma(\partial_r U).
\end{align*}
This is the isoperimetric inequality for weighted non-amenability.
Hence, $\Gamma$ is weighted non-amenable.
\end{proof}

\subsection{From Weighted Non-amenability to Vanishing}
To prove the converse implication, we argue by contraposition. We show that if $H^{(\infty)}_0(\Gamma)\neq0$, then $\Gamma$ is weighted amenable, thereby completing the proof of Theorem~\ref{thm:main2}.

\begin{lem} \label{lem:tail}
Let $\Gamma$ be a BGR discrete mm-space and let $\nu \in C^{(\infty)}_0(\Gamma)$ satisfy $[\nu] = 0$ in $H^{(\infty)}_0(\Gamma)$ and $\nu \ge 0$.  Then, for every $x \in \Gamma$ with $\nu(x) > 0$, there exists $t_x \in C^{(\infty)}_1(\Gamma)$ such that $t_x \ge 0$, $\partial t_x = \nu(x) \delta_x$ and $\sum_{x \in \Gamma : \nu(x) > 0} t_x \in C^{(\infty)}_1(\Gamma)$.
\end{lem}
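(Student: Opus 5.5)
The plan is to realize $\nu$ as the divergence of a nonnegative flow, and then split that flow into pieces indexed by the sinks $x$ by running a backward, proportional-splitting walk from each $x$. The key inequality will be $\sum_x t_x \le \psi$ edgewise, obtained from a minimal-solution argument. Once that holds, condition $(\Delta)$ and condition $(\ell^\infty)$ for the sum are inherited from $\psi$.

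\emph{Step 1: a nonnegative flow.} Since $[\nu]=0$, there is $\psi\in C^{(\infty)}_1(\Gamma)$ with $\partial\psi=\nu$. By Lemma \ref{lem:nonneg-chain} we may take $\psi\ge0$. Writing $\mathrm{in}(y):=\sum_a\psi(a,y)$ and $\mathrm{out}(y):=\sum_b\psi(y,b)$, these sums are finite by condition BG and $\Delta_\psi<\infty$. The identity $\partial\psi=(p_1)_\bullet\psi-(p_0)_\bullet\psi=\nu$ reads $\mathrm{in}(y)=\mathrm{out}(y)+\nu(y)$ for every $y$. In particular, $\nu(x)>0$ forces $\mathrm{in}(x)>0$.

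\emph{Step 2: backward splitting.} For $x$ with $\nu(x)>0$, let mass $\nu(x)$ sit at $x$ and push it backward: a vertex $b$ holding mass $m$ sends $m\,\psi(a,b)/\mathrm{in}(b)$ along each incoming edge $(a,b)$ to $a$. Each $a$ then re-sends everything it receives, and this repeats indefinitely. Let $m^{(k)}_x(a)$ be the mass arriving at $a$ at step $k$, with $m^{(0)}_x=\nu(x)\delta_x$, and put $M_x:=\sum_k m^{(k)}_x$. Define
\[
t_x(a,b):=\frac{\psi(a,b)}{\mathrm{in}(b)}\,M_x(b).
\]

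\emph{Step 3: the key bound $\sum_x t_x\le\psi$.} This is where the weight lies, since finiteness of $M_x$ is not a priori clear: cycles and infinite backward paths occur. Set $M:=\sum_x M_x$. Its partial sums $M^{(K)}$ satisfy
\[
M^{(K+1)}(a)=\nu(a)+\sum_b\frac{\psi(a,b)}{\mathrm{in}(b)}M^{(K)}(b),\qquad M^{(0)}=\nu .
\]
The function $\mathrm{in}$ is a fixed point of this monotone map, because $\nu(a)+\mathrm{out}(a)=\mathrm{in}(a)$. By induction $M^{(K)}\le\mathrm{in}$ for all $K$, so $M\le\mathrm{in}<\infty$. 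Hence each $M_x$ is finite and
\[
\sum_x t_x(a,b)=\psi(a,b)\,\frac{M(b)}{\mathrm{in}(b)}\le\psi(a,b).
\]

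\emph{Step 4: the chain conditions.} Since $t_x\ge0$ and $\sum_x t_x\le\psi$, the supports of each $t_x$ and of the sum lie in $\supp\psi$, giving condition $(\Delta)$. The Radon--Nikodym densities are bounded by that of $\psi$, giving condition $(\ell^\infty)$. Thus both $t_x$ and $\sum_x t_x$ lie in $C^{(\infty)}_1(\Gamma)$.

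\emph{Step 5: the boundary identity.} It remains to check $\partial t_x=\nu(x)\delta_x$, which is a local computation at each vertex. The inflow of $t_x$ at $b$ is $\sum_a t_x(a,b)=M_x(b)$. The outflow of $t_x$ at $a$ is $\sum_b t_x(a,b)=\sum_k m^{(k+1)}_x(a)=M_x(a)-\nu(x)\delta_x(a)$. All sums involved are finite, by bounded geometry and $M_x<\infty$. Hence $(p_1)_\bullet t_x-(p_0)_\bullet t_x=\nu(x)\delta_x$, as required.
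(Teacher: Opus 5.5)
Your proof is correct, but it organizes the backward transport differently from the paper. Both arguments start from a nonnegative $\psi$ with $\partial\psi=\nu$ and push the mass at $x$ backward using the disintegration of a flow with respect to $p_1$.

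The paper does this greedily. At step $k+1$ it disintegrates the \emph{residual} $\rho^k=\psi-\sum_{i\le k}\psi_i$ rather than $\psi$ itself. It proves $\rho^k\ge0$ through the identity $(p_1)_\bullet\rho^k-\varphi_k=\nu-\varphi_0+(p_0)_\bullet\rho^k$, which gives $t_x\le\psi$ for a single $x$. It then bounds $\sum_x t_x$ by enumerating the sinks and repeatedly replacing $(\nu,\psi)$ by $(\nu-\nu(x)\delta_x,\psi-t_x)$.

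You instead use the fixed kernel $P(a,b):=\psi(a,b)/\mathrm{in}(b)$ at every step and for all sinks at once. As a result, $x\mapsto t_x$ is linear and independent of any ordering, and it has the closed formula $t_x(a,b)=\psi(a,b)M_x(b)/\mathrm{in}(b)$. The domination $\sum_x t_x\le\psi$, and with it the finiteness of each $M_x$, then reduces to the single minimal-solution estimate $\sum_k P^k\nu\le\mathrm{in}$. This holds because the relation $\mathrm{in}=\mathrm{out}+\nu$ makes $\mathrm{in}$ a fixed point of the monotone map $u\mapsto\nu+Pu$.

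This handles cycles and infinite backward paths more transparently than the paper's peeling, where one needs $\sum_i\varphi_i\le\varphi$ to force $\varphi_k\to0$. The cost is leaving the coupling and disintegration language the paper uses throughout.

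One detail you should state explicitly is the convention $\psi(a,b)/\mathrm{in}(b):=0$ when $\mathrm{in}(b)=0$. It is harmless: in that case $\psi(\cdot,b)=0$ and $M_x(b)\le M(b)\le\mathrm{in}(b)=0$, so the inflow identity $\sum_a t_x(a,b)=M_x(b)$ in your Step 5 still holds at such $b$.
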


\begin{proof}
For this $\nu$, there is $\psi \in C^{(\infty)}_1(\Gamma)$ such that $\nu = \partial\psi$.  By Lemma \ref{lem:nonneg-chain}, we may assume that $\psi \ge 0$, i.e., $\psi$ is a (nonnegative) measure on $\Gamma^2$.
Let $\varphi := (p_0)_\bullet\psi$.  Then, since $\nu = \partial\psi = (p_1)_\bullet\psi - \varphi$, we have $\psi \in \Pi(\varphi,\varphi + \nu)$.
Let $\{\tilde{\psi}_{x_1}\}_{x_1 \in \Gamma}$ be the disintegration of $\psi$ with respect to the projection $p_1$.

We first construct an increasing sequence of measures $\{\psi_i\}$, and then define $t_x$ as its limit.
Fix an arbitrary $x \in \Gamma$ with $\nu(x) > 0$.
We set $\varphi_0 := \nu(x)\delta_x$,
$\psi_1 := \int_\Gamma \tilde{\psi}_{x_1} \, d\varphi_0(x_1)$,
and $\varphi_1 := (p_0)_\bullet\psi_1$.
Since $(p_1)_\bullet\psi_1 = \int_\Gamma \delta_{x_1} \, d\varphi_0(x_1) = \varphi_0$, we have $\partial\psi_1 = \varphi_0 - \varphi_1$.
Since $0 \le \varphi_0 \le \nu \le \varphi + \nu = (p_1)_\bullet\psi$, we have $0 \le \psi_1 \le \int_\Gamma \tilde{\psi}_{x_1} \, d((p_1)_\bullet\psi)(x_1) = \psi$.  Hence, $\rho^1 := \psi - \psi_1$ is a (nonnegative) measure.
Using the disintegration of $\rho^1$, we set
$\psi_2 := \int_\Gamma \tilde{\rho}^1_{x_1} \, d\varphi_1(x_1)$ and
$\varphi_2 := (p_0)_\bullet\psi_2$.
We repeat this procedure.
Assume that $\psi_1,\dots,\psi_k$ are defined and define
$\varphi_i := (p_0)_\bullet\psi_i$, $i=1,\dots,k$.
Assume also that $\rho^k := \psi - \sum_{i=1}^k \psi_i \ge 0$
and define
\[
\psi_{k+1} := \int_\Gamma \tilde{\rho}^k_{x_1} \, d\varphi_k(x_1),
\qquad \varphi_{k+1} := (p_0)_\bullet\psi_{k+1}.
\]
Then, $\partial\psi_{k+1} = \varphi_k - \varphi_{k+1}$.

We prove
\begin{equation} \label{eq:psi-sum}
\rho^{k+1} := \psi - \sum_{i=1}^{k+1} \psi_i \ge 0.
\end{equation}
Since
\[
\rho^{k+1} = \rho^k - \psi_{k+1}
= \int_\Gamma \tilde{\rho}^k_{x_1} \,d(p_1)_\bullet\rho^k(x_1)
- \int_\Gamma \tilde{\rho}^k_{x_1} \, d\varphi_k(x_1),
\]
it suffices to prove that
\[
(p_1)_\bullet\rho^k - \varphi_k \ge 0.
\]
Since $(p_1)_\bullet\psi = \varphi + \nu$ and
$(p_1)_\bullet\psi_i = \int_\Gamma \delta_{x_1} \,d\varphi_{i-1}(x_1) = \varphi_{i-1}$, we have
\begin{align*}
(p_1)_\bullet\rho^k - \varphi_k &= (p_1)_\bullet\psi - \sum_{i=1}^k (p_1)_\bullet\psi_i - \varphi_k\\
&= \varphi + \nu - \sum_{i=0}^k \varphi_i = \nu - \varphi_0 + (p_0)_\bullet\rho^k \ge 0.
\end{align*}
This proves \eqref{eq:psi-sum}.

We set $t_{x,k} := \sum_{i=1}^k \psi_i$.
Since $t_{x,k}$ is monotone nondecreasing in $k$ and $0 \le t_{x,k} \le \psi$, it converges to some measure, say $t_x$.
Since $0 \le t_x \le \psi$, $t_x$ belongs to $C^{(\infty)}_1(\Gamma)$.
Since $\sum_{i=1}^k \varphi_i = \sum_{i=1}^k (p_0)_\bullet\psi_i \le (p_0)_\bullet\psi = \varphi$, the measure $\varphi_k$ converges to $0$ as $k\to\infty$.
Therefore,
$\partial t_{x,k} = \sum_{i=1}^k (\varphi_{i-1} - \varphi_i) = \varphi_0 - \varphi_k \to \varphi_0 = \nu(x)\delta_x$ as $k\to\infty$ and hence
\[
\partial t_x = (p_1)_\bullet t_x - (p_0)_\bullet t_x
= \lim_{k\to\infty} ((p_1)_\bullet t_{x,k} - (p_0)_\bullet t_{x,k})
= \lim_{k\to\infty} \partial t_{x,k} = \nu(x)\delta_x.
\]

Replacing $\nu$ by $\nu-\nu(x)\delta_x$ and $\psi$ by $\psi-t_x$, we repeat this argument to get $t_x$ for every $x \in \Gamma$ with $\nu(x) > 0$.
Since $\sum_{x \in \Gamma : \nu(x) > 0} t_x \le \psi$,
the measure $\sum_{x \in \Gamma : \nu(x) > 0} t_x$ belongs to $C^{(\infty)}_1(\Gamma)$.
This completes the proof.
\end{proof}

Using Lemma \ref{lem:tail}, we prove the following.

\begin{lem} \label{lem:H0}
Let $\Gamma$ be a BGR discrete mm-space.  The following are equivalent.
\begin{enumerate}
\item
$H^{(\infty)}_0(\Gamma) = 0$.
\item
$[\mu_\Gamma] = 0$ in $H^{(\infty)}_0(\Gamma)$.
\item
There exists $\nu \in C^{(\infty)}_0(\Gamma)$ such that
$\inf_\Gamma \frac{d\nu}{d\mu_\Gamma} > 0$ and
$[\nu] = 0$ in $H^{(\infty)}_0(\Gamma)$.
\end{enumerate}
\end{lem}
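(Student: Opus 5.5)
The implications (1)$\Rightarrow$(2) and (2)$\Rightarrow$(3) are immediate. For the first, note that $\mu_\Gamma \in C^{(\infty)}_0(\Gamma)$, since its density with respect to $\mu_{\Gamma^1}=\mu_\Gamma$ is $1$. For the second, take $\nu=\mu_\Gamma$, whose density is identically $1$. The substance is (3)$\Rightarrow$(1). We argue via an analogue of the Eilenberg swindle, using Lemma~\ref{lem:tail} to split a global primitive of $\nu$ into pointwise pieces.

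First we reduce to the case $\nu\ge 0$. Given $\nu$ as in (3), set $c:=\inf_\Gamma d\nu/d\mu_\Gamma>0$. Then $\nu \ge c\mu_\Gamma>0$ pointwise, so $\nu\ge0$ and $\nu(x)>0$ for every $x\in\Gamma$. Lemma~\ref{lem:tail} then provides, for every $x\in\Gamma$, a chain $t_x\ge0$ in $C^{(\infty)}_1(\Gamma)$ with $\partial t_x=\nu(x)\delta_x$, such that $T:=\sum_{x}t_x\in C^{(\infty)}_1(\Gamma)$.

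Next, let $\eta\in C^{(\infty)}_0(\Gamma)$ be arbitrary and write $f:=d\eta/d\mu_\Gamma\in\ell^\infty(\Gamma)$. Define
\[
\psi:=\sum_{x\in\Gamma}\frac{\eta(x)}{\nu(x)}\,t_x .
\]
The coefficients satisfy $|\eta(x)/\nu(x)| = |f(x)|\mu_\Gamma(x)/\nu(x)\le \|f\|_\infty/c$. Since $t_x\ge0$, we get $|\psi|\le (\|f\|_\infty/c)\,T$ as measures on $\Gamma^2$. Condition $(\ell^\infty)$ for $\psi$ follows from that of $T$. Moreover $\supp\psi\subset\supp T$, which gives $\Delta_\psi\le\Delta_T<\infty$, and $\psi$ is boundedly finite because $|\psi|\le C\,T$. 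Hence $\psi\in C^{(\infty)}_1(\Gamma)$.

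It remains to compute $\partial\psi$. Formally, $\partial\psi=\sum_x(\eta(x)/\nu(x))\,\nu(x)\delta_x=\eta$, so $[\eta]=0$, and since $\eta$ was arbitrary this gives (1).

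The main obstacle is justifying the interchange of $\partial$ with the infinite sum. The projections $(p_0)_\bullet$ and $(p_1)_\bullet$ are evaluated pointwise on the countable set $\Gamma$; at a point $y$ they involve sums such as $\sum_x (\eta(x)/\nu(x))\,t_x(\Gamma\times\{y\})$. These sums converge absolutely, being dominated by $C\,T(\Gamma\times\{y\})$ and $C\,T(\{y\}\times\Gamma)$, which are finite by bounded finiteness of $T$ (bounded support via $\Delta_T$). So by dominated convergence (or Fubini for sums), $(p_j)_\bullet\psi=\sum_x (\eta(x)/\nu(x))(p_j)_\bullet t_x$ for $j=0,1$, and therefore $\partial\psi=\eta$ as claimed.
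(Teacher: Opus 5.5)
Your proposal is correct and follows the paper's own proof: both apply Lemma~\ref{lem:tail} to $\nu$, form $\sum_{x}(\eta(x)/\nu(x))\,t_x$, use the uniform bound on the coefficients to place it in $C^{(\infty)}_1(\Gamma)$, and observe that its boundary is $\eta$. Your domination by $T=\sum_x t_x$ to justify exchanging $\partial$ with the infinite sum spells out a step the paper leaves implicit.
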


\begin{proof}
The implications (1)$\implies$(2)$\implies$(3) are immediate.

We prove (3)$\implies$(1).
Assume (3) holds.  Take an arbitrary $c \in C^{(\infty)}_0(\Gamma)$.
We apply Lemma \ref{lem:tail} to the chain $\nu$ in (3) to obtain
$t_x$, $x \in \Gamma$.
Since
\[
\frac{c(x)}{\nu(x)} = \frac{\frac{dc}{d\mu_\Gamma}(x)}{\frac{d\nu}{d\mu_\Gamma}(x)}
\]
is uniformly bounded,
we have $\sum_{x\in\Gamma} \frac{c(x)}{\nu(x)} t_x \in C^{(\infty)}_1(\Gamma)$.
Since $c = \sum_{x\in\Gamma} c(x)\delta_x
= \partial \sum_{x\in\Gamma} \frac{c(x)}{\nu(x)} t_x$,
we have $[c] = 0$ in $H^{(\infty)}_0(\Gamma)$.
This proves (1).
\end{proof}

\begin{lem} \label{lem:H0-am-Gamma}
Let $\Gamma$ be a BGR discrete mm-space.
If $H^{(\infty)}_0(\Gamma) \neq 0$, then $\Gamma$ is weighted amenable.
\end{lem}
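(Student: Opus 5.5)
We argue by contraposition: assuming $\Gamma$ is weighted non-amenable, we show $H^{(\infty)}_0(\Gamma)=0$. By Lemma \ref{lem:H0}, it suffices to produce $\psi\in C^{(\infty)}_1(\Gamma)$ with $\partial\psi=\mu_\Gamma$, i.e.\ a weighted Ponzi scheme. Non-amenability gives $r,C>0$ with $\mu_\Gamma(U)\le C\mu_\Gamma(\partial_r U)$ for every finite $U\subset\Gamma$. We view the required $\psi$ as a flow on the graph with edges $\{(x,y): 0<d_\Gamma(x,y)\le r'\}$, where $r'$ is a multiple of $r$. By condition BG this graph has uniformly bounded degree. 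The flow must have net outflow at each vertex equal to $\mu_\Gamma(x)$ (up to the sign convention $\partial\psi=(p_1)_\bullet\psi-(p_0)_\bullet\psi$). Its capacity on the edge $(x,y)$ is $K(\mu_\Gamma(x)\mu_\Gamma(y))^{1/2}$, which by condition BR is comparable to $K\mu_\Gamma(x)$. The $(\ell^\infty)$ condition is exactly this capacity bound, and $(\Delta)$ holds because edges have length at most $r'$.

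\textbf{Step 1: a Hall/max-flow condition on finite sets.} We first convert the isoperimetric inequality into a boundary-flux inequality. Split $\partial_rU$ into an inner part $\partial_rU\cap U$ and an outer part $\partial_rU\cap U^c$. Each outer point lies within $r$ of $U$. By BG and BR, the measure of the outer part is at most a constant times $\mu_\Gamma$ of the inner boundary $\{x\in U: d(x,U^c)\le r\}$ enlarged by points within $2r$. Either way, $\mu_\Gamma(U)$ is bounded by a constant times the total capacity of edges of length at most $r'=2r$ leaving $U$. Scaling $K$ large, we get
\[
\mu_\Gamma(U)\le \sum_{x\in U,\,y\notin U,\,d(x,y)\le r'} K(\mu_\Gamma(x)\mu_\Gamma(y))^{1/2}
\]
for every finite $U$.

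\textbf{Step 2: finite flows.} Fix a finite ball $F_m=B_m(x_0)\cap\Gamma$ and add a sink vertex $\infty$ representing $\Gamma\setminus F_m$, so that edges leaving $F_m$ go to the sink with their capacities. By the max-flow min-cut theorem, a flow from sources $x$ of strength $\mu_\Gamma(x)$, $x\in F_m$, into the sink exists if and only if every cut $U\subset F_m$ has capacity at least $\mu_\Gamma(U)$. That is exactly the inequality from Step 1, since a cut's capacity counts the edges leaving $U$, including those into the sink. This yields $\psi_m\ge0$ supported on edges of length at most $r'$, bounded by the capacities, with $\partial\psi_m=\mu_\Gamma$ on $F_m$.

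\textbf{Step 3: limit.} The values $\psi_m(x,y)$ lie in the compact intervals $[0,K(\mu_\Gamma(x)\mu_\Gamma(y))^{1/2}]$, and $\Gamma^2$ is countable. A diagonal subsequence therefore converges pointwise to some $\psi$. At each $y$, $\partial\psi(y)$ is a finite sum over the bounded-degree neighborhood, so it passes to the limit, and $\partial\psi_m(y)=\mu_\Gamma(y)$ once $F_m\ni y$ and its $r'$-neighborhood lies in $F_m$. Hence $\partial\psi=\mu_\Gamma$ (choosing the orientation appropriately), with $\Delta_\psi\le r'$ and density at most $K$, so $\psi\in C^{(\infty)}_1(\Gamma)$. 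Lemma \ref{lem:H0} then gives $H^{(\infty)}_0(\Gamma)=0$, contradicting the hypothesis, so $\Gamma$ is weighted amenable.

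The main obstacle is Step 1. Passing from $\mu_\Gamma(\partial_rU)$, which includes outer points, to the capacity of edges leaving $U$ requires care with the BR constants at scale $2r$. We also need the cut capacity to dominate the inner boundary measure: every inner boundary point $x\in U$ has a neighbor $y\notin U$ with $d(x,y)\le r$, and BR gives $(\mu_\Gamma(x)\mu_\Gamma(y))^{1/2}\ge\lambda_r^{-1/2}\mu_\Gamma(x)$. With that bound the constants work out.
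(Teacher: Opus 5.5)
Your argument is correct, but it runs in the opposite direction from the paper's proof.

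\textbf{The paper's route.} The paper proves the statement directly, following Block--Weinberger.
\begin{enumerate}
\item From $H^{(\infty)}_0(\Gamma)\neq0$ and Lemma \ref{lem:H0}, it separates the open convex set $\cA$ of uniformly positive functions from the subspace $\Phi\partial C^{(\infty)}_1(\Gamma)$ by Hahn--Banach. This gives a positive normalized functional $m$ on $\ell^\infty(\Gamma)$ that vanishes on all weighted boundaries.
\item It approximates $m$ weak-$*$ by finitely supported probability densities.
\item Using the adjoint identity $\langle\varphi,\bar\partial\psi\rangle=\langle\bar\delta\varphi,\psi\rangle$ and the identification $\cC'\cong\Phi C^{(\infty)}_1(\Gamma)$, it gets weak convergence $\bar\delta m_\alpha\to0$. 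Convexity then yields a single $\varphi$ with small weighted $r$-gradient.
\item A layer-cake decomposition of $\varphi$ into superlevel sets extracts a F\o lner set.
\end{enumerate}

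\textbf{Your route.} You prove the contrapositive combinatorially. The isoperimetric inequality becomes a cut condition for symmetric capacities $K(\mu_\Gamma(x)\mu_\Gamma(y))^{1/2}$. Max-flow min-cut on finite balls gives finite flows, and a diagonal compactness argument produces $\psi\ge0$ with $\partial\psi=\mu_\Gamma$, $\Delta_\psi\le r'$ and density at most $K$. Lemma \ref{lem:H0} (2)$\Rightarrow$(1) then finishes. Both routes ultimately rest on Lemma \ref{lem:H0}, and hence on the weighted Eilenberg swindle of Lemma \ref{lem:tail}.

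\textbf{What each buys.} Your approach avoids Hahn--Banach, weak-$*$ approximation, and the identification of the dual of the Fr\'echet space $\cC$. It is also quantitative: it gives an explicit Ponzi scheme with propagation $r$ and density bounded in terms of $C$ and the BR constant $\lambda_r$. In particular it yields the Ponzi-scheme characterization of Theorem \ref{thm:main-Gamma} directly. The paper's approach instead produces the weighted invariant mean $m$ and almost-invariant densities $\varphi$ as by-products. These are the natural objects for dual or boundary-type criteria.

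\textbf{A simplification of Step 1.} Your Step 1 is more roundabout than needed, and no scale $2r$ or BG constant is required. An outer point $y\in\partial_rU\setminus U$ is itself the endpoint of a cut edge $(x,y)$ with $x\in U$ and $d_\Gamma(x,y)\le r$. Distinct outer points give distinct such edges, and the same holds for inner points. Hence
\[
\mu_\Gamma(\partial_rU)\le 2\lambda_r^{1/2}\sum_{x\in U,\,y\notin U,\,d_\Gamma(x,y)\le r}(\mu_\Gamma(x)\mu_\Gamma(y))^{1/2},
\]
so $r'=r$ and $K=2C\lambda_r^{1/2}$ suffice.
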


\begin{proof}
The proof is similar to that in \cite{BW}.
Let $\cA := \{\,\varphi \in \ell^\infty(\Gamma) \mid \inf \varphi > 0\,\}$.
Assuming $H^{(\infty)}_0(\Gamma) \neq 0$, we have
$\cA \cap \Phi\partial C^{(\infty)}_1(\Gamma) = \emptyset$
by Lemma \ref{lem:H0}.
$\cA$ is an open convex set in $\ell^\infty(\Gamma)$ and $\Phi\partial C^{(\infty)}_1(\Gamma)$ is a linear subspace of $\ell^\infty(\Gamma)$.

By the Hahn--Banach separation theorem, there exists a continuous linear functional
$m : \ell^\infty(\Gamma) \to \mathbb{R}$ such that
$m>0$ on $\mathcal A$ and $m=0$ on
$\Phi\partial C^{(\infty)}_1(\Gamma)$.
Since the constant function $1$ belongs to $\mathcal A$, we have
$m(1)>0$.
Replacing $m$ by $m/m(1)$, we may assume that
$m(1)=1$.

We next prove that $m$ is positive.
Let $f\in \ell^\infty(\Gamma)$ satisfy $f\ge0$.
For every $\varepsilon>0$, we have
$f+\varepsilon\in\mathcal A$.
Hence, $m(f)+\varepsilon = m(f+\varepsilon) >0$.
Since $\varepsilon>0$ is arbitrary, we obtain
$m(f)\ge0$.
Therefore, $m$ is positive.

Since $m(1)=1$, it follows that
the operator norm of $m$ satisfies $\|m\|=1$.
Indeed, for every \(f\in\ell^\infty(\Gamma)\), positivity of \(m\) and
\(-\|f\|_\infty \le f\le\|f\|_\infty \) imply
\(|m(f)|\le\|f\|_\infty m(1)=\|f\|_\infty\); the reverse inequality follows by evaluating \(m\) at \(1\).

$m$ belongs to the continuous dual space $\ell^\infty(\Gamma)'$ of $\ell^\infty(\Gamma)$.

Let $\ell^1(\Gamma)$ denote the Banach space of $\mu_\Gamma$-integrable functions on $\Gamma$ with $\ell^1$ norm $\|\cdot\|_1$.
We prove the following.

\begin{clm}
There exists a net $\{m_\alpha\} \subset \ell^1(\Gamma)$ such that $\|m_\alpha\|_1 = 1$, $m_\alpha \ge 0$, $\supp m_\alpha$ is a finite set and $m_\alpha$ converges to $m$ in the weak $*$ topology.
\end{clm}

\begin{proof}
Let $K := \{\,\varphi \in \ell^1(\Gamma) \mid \|\varphi\|_1 = 1,\ \varphi \ge 0,\ \supp\varphi\ \text{is finite}\,\}$.
Embedding $\ell^1(\Gamma)$ into $\ell^\infty(\Gamma)'$,
we see that $K$ is a convex set in $\ell^\infty(\Gamma)'$.
It suffices to prove that $m$ is an element of the closure of $K$ with respect to the weak $*$ topology $\sigma(\ell^\infty(\Gamma)',\ell^\infty(\Gamma))$.
We prove this by contradiction.
Suppose that $m$ is not an element of the closure of $K$.
By the Hahn--Banach separation theorem for the weak $*$ topology, applied to the weak $*$
closure of $K$ and the point $m$, there exists $f \in \ell^\infty(\Gamma)$
such that
\[
\sup_{\varphi\in K} \int_\Gamma f\varphi\,d\mu_\Gamma < m(f).
\]
Since $\int_\Gamma \varphi\,d\mu_\Gamma = 1$ for every $\varphi\in K$
and $m(1)=1$, replacing $f$ by $f+C$ preserves the above strict inequality.
Adding a sufficiently large constant to $f$, we may assume that $f \ge 0$.
We have $\sup_{\varphi\in K} \int_\Gamma f \varphi \, d\mu_\Gamma = \sup f$.   On the other hand, since the operator norm of $m$ is equal to $1$, we have $m(f) \le \|f\|_\infty = \sup f$.  This is a contradiction.
\end{proof}

For $r > 0$, let $\Gamma^2(r) := \{ \,(x,y) \in \Gamma^2 \mid d_\Gamma(x,y) \le r\,\}$.

\begin{clm} \label{clm:varphi}
For every $\varepsilon, r > 0$, there exists $\varphi \in \ell^1(\Gamma)$ such that $\|\varphi\|_1 = 1$, $\varphi \ge 0$, $\supp\varphi$ is a finite set and
\[
\sum_{(x,y) \in \Gamma^2(r)} |\varphi(x) - \varphi(y)|\,(\mu_\Gamma(x)\,\mu_\Gamma(y))^{\frac12} < \varepsilon.
\]
\end{clm}

\begin{proof}
For $r > 0$ and a function $\psi : \Gamma^2 \to \R$, we define
\[
\|\psi\|_r := \int_{\Gamma^2(r)} |\psi | \, d\mu_{\Gamma^2} = \sum_{(x,y) \in \Gamma^2(r)} |\psi(x,y)| \,(\mu_\Gamma(x)\,\mu_\Gamma(y))^{\frac12}.
\]
This is a semi-norm on the set of functions on $\Gamma^2$.
Let $\cC := \{\,\psi : \Gamma^2 \to \R \mid \|\psi\|_r < \infty, \forall\,r > 0\,\}$.  Then, $\cC$ is a topological vector space by the family of semi-norms
$\{\|\cdot\|_r\}_{r > 0}$.
Since $\cC = \varprojlim \ell^1(\Gamma^2(r))$ (the projective limit with respect to $r\to\infty$), the continuous dual space $\cC'$ of $\cC$ satisfies
\[
\cC' \cong \varinjlim \ell^\infty(\Gamma^2(r)) = \Phi C^{(\infty)}_1(\Gamma)
\subset \ell^\infty(\Gamma^2).
\]

For $\varphi \in \ell^1(\Gamma)$, we define
\[
\bar{\delta}\varphi(x,y) := \varphi(y) - \varphi(x).
\]
Since there exists a constant $\lambda_r \ge 1$ such that $\mu_\Gamma(y) \le \lambda_r \mu_\Gamma(x)$, we have
\begin{align*}
\|\bar{\delta}\varphi\|_r &= \sum_{(x,y) \in \Gamma^2(r)} |\varphi(y)-\varphi(x)| \,(\mu_\Gamma(x)\,\mu_\Gamma(y))^{\frac12}
\le 2 \sum_{(x,y) \in \Gamma^2(r)} |\varphi(x)| \,(\mu_\Gamma(x)\,\mu_\Gamma(y))^{\frac12}\\
&\le 2 \lambda_r^{\frac12} \sum_{x \in \Gamma}  \# B_r(x)\, |\varphi(x)|\,\mu_\Gamma(x)
\le 2 \lambda_r^{\frac12} \|\varphi\|_1 \sup_{x \in \Gamma} \# B_r(x) < \infty,
\end{align*}
which implies $\bar{\delta}\varphi \in \cC$.
We define, for $\varphi_1 \in \ell^1(\Gamma)$ and $\varphi_2 \in \ell^\infty(\Gamma)$,
\[
\langle \varphi_1,\varphi_2 \rangle := \int_\Gamma \varphi_1 \varphi_2 \, d\mu_\Gamma
= \sum_{x\in\Gamma} \varphi_1(x)\,\varphi_2(x)\,\mu_\Gamma(x),
\]
and for $\psi_1 \in \cC$, $\psi_2 \in \ell^\infty(\Gamma^2)$,
\[
\langle \psi_1,\psi_2 \rangle := \int_{\Gamma^2} \psi_1 \psi_2 \, d\mu_{\Gamma^2} = \sum_{x,y\in\Gamma} \psi_1(x,y)\,\psi_2(x,y)\,(\mu_\Gamma(x)\,\mu_\Gamma(y))^{\frac12}.
\]
For every $\psi \in \Phi C^{(\infty)}_1(\Gamma)$,
setting $\bar{\partial}\psi := \Phi\partial\Phi^{-1}\psi$, we have, by a straightforward calculation,
\[
\bar{\partial}\psi(x) = \sum_{y \in \Gamma} (\psi(y,x)-\psi(x,y))
\left( \frac{\mu_\Gamma(y)}{\mu_\Gamma(x)} \right)^{\frac12}.
\]
Hence, for every $\varphi \in \ell^1(\Gamma)$,
\begin{align*}
\langle \varphi,\bar{\partial}\psi \rangle
&= \sum_{x\in\Gamma} \varphi(x) \sum_{y\in\Gamma} (\psi(y,x) - \psi(x,y)) (\mu_\Gamma(x)\mu_\Gamma(y))^{\frac12}\\
&= \sum_{x,y\in\Gamma} ( \varphi(y) - \varphi(x) ) \psi(x,y) (\mu_\Gamma(x)\,\mu_\Gamma(y))^{\frac12}\\
&= \langle \bar{\delta}\varphi,\psi \rangle.
\end{align*}

Since the net $\{m_\alpha\}$ converges to $m$ in the weak $*$ topology,
for every $\psi \in \Phi C^{(\infty)}_1(\Gamma)$,
\[
\langle \bar{\delta} m_\alpha,\psi \rangle
= \langle m_\alpha,\bar{\partial}\psi \rangle
\to m(\bar{\partial}\psi) = 0.
\]
Hence, $\bar{\delta}m_\alpha$ converges to $0$ in the weak topology of $\cC$.
In particular, $0$ belongs to the weak closure of $\bar{\delta}(K)$.
Since $\bar{\delta}(K)$ is convex and $\cC$ is a locally convex space,
the weak closure of $\bar{\delta}(K)$ coincides with its closure with
respect to the topology of $\cC$.
Therefore, for every $\varepsilon, r > 0$, there exists
$\varphi \in K$ such that
\[
\|\bar{\delta}\varphi\|_r < \varepsilon.
\]
This $\varphi$ satisfies the required condition.
\end{proof}

For every given $\varepsilon, r > 0$, let $\varphi$ be as in Claim \ref{clm:varphi}.
Let $t_0 = 0 < t_1 < \cdots < t_N$ be the values of $\varphi$ and let
$U_i := \{\varphi \ge t_i\}$.
Since
\[
\varphi = \sum_{i=1}^N (t_i - t_{i-1}) I_{U_i},
\]
setting $a_i := (t_i - t_{i-1}) \mu_\Gamma(U_i)$ we have
\[
\varphi = \sum_{i=1}^N \frac{a_i I_{U_i}}{\mu_\Gamma(U_i)}.
\]
Since $a_i > 0$ and $\|\varphi\|_1 = 1$, we have $\sum_{i=1}^N a_i = 1$.
Let $i(x)$ be such that $t_{i(x)} = \varphi(x)$ and let $i(x,y) := \min\{i(x),i(y)\}$ and $j(x,y) := \max\{i(x),i(y)\}$.
Then,
\begin{align*}
&\sum_{(x,y) \in \Gamma^2(r)} |\varphi(x) - \varphi(y)| \,(\mu_\Gamma(x)\,\mu_\Gamma(y))^{\frac12}
= \sum_{(x,y) \in \Gamma^2(r)} \sum_{i=i(x,y)+1}^{j(x,y)} \frac{a_i}{\mu_\Gamma(U_i)}\,(\mu_\Gamma(x)\,\mu_\Gamma(y))^{\frac12}\\
&= \sum_{i=1}^N \frac{a_i}{\mu_\Gamma(U_i)} \sum_{(x,y) \in \Gamma^2(r)} | I_{U_i}(x) - I_{U_i}(y)|\,(\mu_\Gamma(x)\,\mu_\Gamma(y))^{\frac12}.
\end{align*}
We are going to estimate this from below.
For every $x\in\partial_rU_i$, there is
$y\in B_r(x)$ lying on the opposite side of $U_i$, that is,
$|I_{U_i}(x)-I_{U_i}(y)|=1$.
Hence,
\[
\sum_{y\in B_r(x)} |I_{U_i}(x) - I_{U_i}(y)| \ge 1,
\]
which implies
\begin{align*}
&\sum_{(x,y) \in \Gamma^2(r)} | I_{U_i}(x) - I_{U_i}(y)|\,(\mu_\Gamma(x)\,\mu_\Gamma(y))^{\frac12}
\ge \lambda_r^{-\frac12} \sum_{(x,y) \in \Gamma^2(r)} | I_{U_i}(x) - I_{U_i}(y)|\,\mu_\Gamma(x) \\
&\ge \lambda_r^{-\frac12} \mu_\Gamma(\partial_rU_i).
\end{align*}
Therefore,
\begin{align*}
\varepsilon &> \sum_{(x,y) \in \Gamma^2(r)} |\varphi(x) - \varphi(y)| \,(\mu_\Gamma(x)\,\mu_\Gamma(y))^{\frac12}
\ge \lambda_r^{-\frac12} \sum_{i=1}^N a_i \frac{\mu_\Gamma(\partial_rU_i)}{\mu_\Gamma(U_i)}.
\end{align*}
Thus, there is $i$ such that
\[
\frac{\mu_\Gamma(\partial_rU_i)}{\mu_\Gamma(U_i)} < \lambda_r^{\frac12} \varepsilon.
\]
This proves that $\Gamma$ is weighted amenable.
\end{proof}

\begin{defn}[Ponzi scheme]
Let $\Gamma$ be a BGR discrete mm-space.
We call $\psi \in C^{(\infty)}_1(\Gamma)$ a \emph{Ponzi scheme}
if $\psi \ge 0$ and there exists a constant $c > 0$ such that $\partial\psi \ge c \mu_\Gamma$.
\end{defn}

Combining the previous results, we obtain the following.

\begin{thm} \label{thm:main-Gamma}
For a BGR discrete mm-space $\Gamma$, the following are equivalent.
\begin{enumerate}
\item
$H^{(\infty)}_0(\Gamma) = 0$.
\item
$\Gamma$ is weighted non-amenable.
\item
There exists a Ponzi scheme on $\Gamma$.
\end{enumerate}
\end{thm}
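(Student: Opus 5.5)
The equivalence (1)$\iff$(2) is already in hand: Proposition \ref{prop:H0-non-ame} gives (1)$\implies$(2), and Lemma \ref{lem:H0-am-Gamma}, read contrapositively, gives (2)$\implies$(1). It therefore remains to connect (3) with the other two conditions. I will prove (1)$\implies$(3) and (3)$\implies$(1).

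For (1)$\implies$(3), assume $H^{(\infty)}_0(\Gamma)=0$.
\begin{itemize}
\item Since $\mu_\Gamma\in C^{(\infty)}_0(\Gamma)$, there is $\psi\in C^{(\infty)}_1(\Gamma)$ with $\partial\psi=\mu_\Gamma$.
\item Lemma \ref{lem:nonneg-chain} lets us replace $\psi$ by some $\psi'\ge 0$ with $\partial\psi'=\partial\psi=\mu_\Gamma$.
\item Then $\psi'$ is a Ponzi scheme with $c=1$.
\end{itemize}

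For (3)$\implies$(1), let $\psi$ be a Ponzi scheme with $\partial\psi\ge c\mu_\Gamma$, and set $\nu:=\partial\psi\in C^{(\infty)}_0(\Gamma)$.
\begin{itemize}
\item By construction $[\nu]=0$ in $H^{(\infty)}_0(\Gamma)$.
\item Moreover $\frac{d\nu}{d\mu_\Gamma}\ge c>0$ pointwise, so $\inf_\Gamma \frac{d\nu}{d\mu_\Gamma}\ge c>0$.
\item Thus $\nu$ satisfies condition (3) of Lemma \ref{lem:H0}, and that lemma gives $H^{(\infty)}_0(\Gamma)=0$.
\end{itemize}

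All the substantive work is contained in the earlier results, namely the Hahn--Banach argument in Lemma \ref{lem:H0-am-Gamma} and the swindle-type construction in Lemmas \ref{lem:tail} and \ref{lem:H0}. No step here should pose an obstacle. The one check needed is that Lemma \ref{lem:H0} only asks for a \emph{positive lower bound} on the density of $\nu$, not equality with $\mu_\Gamma$, which is exactly what the Ponzi condition supplies.
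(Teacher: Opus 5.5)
Your proposal is correct and matches the paper's proof: (1)$\iff$(2) comes from Proposition~\ref{prop:H0-non-ame} and Lemma~\ref{lem:H0-am-Gamma}, and (3) is linked to (1) through condition (3) of Lemma~\ref{lem:H0} together with Lemma~\ref{lem:nonneg-chain}. The only difference is cosmetic: for (1)$\implies$(3) you use $\mu_\Gamma=\partial\psi$ directly, giving $c=1$, whereas the paper derives (3) from condition (3) of Lemma~\ref{lem:H0}.
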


\begin{proof}
(1)$\implies$(2) follows from Proposition \ref{prop:H0-non-ame}.

(2)$\implies$(1) follows from Lemma \ref{lem:H0-am-Gamma}.

To prove (3) $\Longleftrightarrow$ (1), it suffices to prove the equivalence between (3) and Lemma \ref{lem:H0}(3).
The implication (3)$\implies$Lemma \ref{lem:H0}(3) is obvious.
Conversely, if we assume Lemma \ref{lem:H0}(3), then there are $\psi \in C^{(\infty)}_1(\Gamma)$ and a constant $c > 0$ such that $\partial\psi \ge c\mu_\Gamma$.  By Lemma \ref{lem:nonneg-chain}, we may assume $\psi \ge 0$.  Therefore, $\psi$ is a Ponzi scheme.  This completes the proof.
\end{proof}

\begin{proof}[Proof of Theorem \ref{thm:main2}]
The theorem follows from Proposition \ref{prop:c-eq-am} and Theorem \ref{thm:main-Gamma}.
\end{proof}

\begin{cor} \label{cor:Ric-amenable}
Let $M$ be a complete Riemannian manifold with Ricci curvature bounded from below.  We regard $M$ as an mm-space for the Riemannian distance and the Riemannian volume measure.
Then the following are equivalent.
\begin{enumerate}
\item
$H^{(\infty)}_0(M) = 0$.
\item
$M$ is weighted non-amenable.
\end{enumerate}
\end{cor}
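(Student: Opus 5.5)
This corollary will follow directly from Theorem \ref{thm:main2}. The only thing to check is that $M$, with its Riemannian distance and Riemannian volume, is a large-scale doubling mm-space in the sense of Definition \ref{defn:ls-doubling}.

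First I will confirm that $M$ is an mm-space. Completeness of $M$ is assumed, and separability holds because $M$ is a second countable manifold. The Riemannian volume $\vol$ has full support, since every nonempty open set has positive volume. It is also boundedly finite: by Hopf--Rinow, bounded closed sets in a complete Riemannian manifold are compact, and compact sets have finite volume.

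Next I will verify the large-scale doubling condition using Bishop--Gromov. Suppose $\mathrm{Ric} \ge -(m-1)k$ with $k \ge 0$, where $m = \dim M$. Let $V_k(r)$ be the volume of an $r$-ball in the $m$-dimensional space form of curvature $-k$. Bishop--Gromov says that $r \mapsto \vol(B_r(x))/V_k(r)$ is nonincreasing for every $x \in M$. Hence
\[
\vol(B_{2r}(x)) \le \frac{V_k(2r)}{V_k(r)}\,\vol(B_r(x)),
\]
and the constant $C_r := V_k(2r)/V_k(r)$ depends only on $r$, $m$ and $k$, not on $x$. So the doubling inequality holds for every $r > 0$, and any $r_0 > 0$ will do. A small technical point is that Definition \ref{defn:ls-doubling} uses closed balls. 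This causes no trouble, because the boundary of a metric ball has measure zero, or alternatively one can pass to limits in $r$ using monotonicity. The paper already records this step in the remark after \eqref{eq:CrR}.

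With both facts in hand, Theorem \ref{thm:main2} applies to $M$ and gives the equivalence of (1) and (2). Since the corollary is a direct specialization of that theorem, the argument has no real obstacle. The only care needed is that the doubling constant is uniform in $x$, which is exactly what the lower Ricci bound provides.
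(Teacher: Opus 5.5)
Your proposal is correct and follows the paper's proof exactly: the paper notes, via Bishop--Gromov as remarked after \eqref{eq:CrR}, that $M$ is large-scale doubling, and then invokes Theorem~\ref{thm:main2}. Your extra checks make explicit what the paper leaves implicit, namely that $M$ is an mm-space, that the doubling constant $V_k(2r)/V_k(r)$ is uniform in $x$, and that the closed-ball issue is handled by the measure-zero sphere or a limit in $r$.
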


\begin{proof}
Since $M$ satisfies the large-scale doubling condition, the corollary follows from Theorem \ref{thm:main2}. 
\end{proof}

\section{Quasi-Isometries and Measure Growth}
\label{sec:qi-measgr}

We recall the following standard definition.

\begin{defn}[Quasi-geodesic]
Let $X$ be a metric space and let $\lambda \ge 1$ and $k \ge 0$.
A map $\gamma : [\,a,b\,] \to X$ is called a \emph{$(\lambda,k)$-quasi-geodesic} if
\[
\frac{1}{\lambda} |t_1 - t_2| - k \le d_X(\gamma(t_1),\gamma(t_2))
\le \lambda |t_1 - t_2| + k.
\]
for every $t_1,t_2 \in [\,a,b\,]$.
$X$ is a \emph{quasi-geodesic space} if
there exist $\lambda \ge 1$ and $k \ge 0$ such that
any two points in $X$ can be joined by a $(\lambda,k)$-quasi-geodesic.
\end{defn}

The following is well-known (cf.~\cite{R:lect}*{Lemma 1.10}).

\begin{lem} \label{lem:quasi-corr-half}
Let $X$ be a quasi-geodesic space and $Y$ a metric space.
For a boundedly dense set $S \subset X \times Y$, the following are equivalent.
\begin{enumerate}
\item
There exist constants $C, L \ge 0$ such that
\[
d_Y(y_1,y_2) \le L \, d_X(x_1,x_2) + C
\]
for every $(x_i,y_i) \in S$, $i=1,2$.
\item
$S$ is bornologous.
\end{enumerate}
\end{lem}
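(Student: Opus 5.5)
The implication (1)$\implies$(2) is immediate. Taking $r=1$, we get $d_Y(y_1,y_2)\le L d_X(x_1,x_2)+C$ whenever $d_X(x_1,x_2)\le r$. Hence $R(S,r):=Lr+C$ works for every $r>0$, and $S$ is bornologous. No quasi-geodesic hypothesis or bounded density is needed for this direction.

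For (2)$\implies$(1), I will use the quasi-geodesic structure of $X$ to chop a long segment into short steps, and bounded density of $S$ to lift each step to $Y$. Fix $\lambda\ge1$ and $k\ge0$ such that any two points of $X$ are joined by a $(\lambda,k)$-quasi-geodesic. Let $D>0$ be such that the $D$-neighborhood of $p_X(S)$ is all of $X$.

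Take $(x_1,y_1),(x_2,y_2)\in S$ and a $(\lambda,k)$-quasi-geodesic $\gamma:[a,b]\to X$ from $x_1$ to $x_2$. The lower bound gives $b-a\le\lambda\,(d_X(x_1,x_2)+k)$. Set $N:=\lceil b-a\rceil$ and subdivide $[a,b]$ into $N$ intervals of length at most $1$, with points $t_0=a<t_1<\dots<t_N=b$. Consecutive points then satisfy $d_X(\gamma(t_{j-1}),\gamma(t_j))\le\lambda+k$. For each interior $j$, use bounded density to choose $(x'_j,y'_j)\in S$ with $d_X(x'_j,\gamma(t_j))\le D$, and put $(x'_0,y'_0)=(x_1,y_1)$ and $(x'_N,y'_N)=(x_2,y_2)$. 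Then $d_X(x'_{j-1},x'_j)\le \lambda+k+2D=:r_1$. Bornologousness gives $d_Y(y'_{j-1},y'_j)\le R(S,r_1)=:R_1$, a constant independent of the points. Summing along the chain,
\[
d_Y(y_1,y_2)\le N R_1\le R_1\bigl(\lambda\, d_X(x_1,x_2)+\lambda k+1\bigr),
\]
which is (1) with $L=\lambda R_1$ and $C=R_1(\lambda k+1)$.

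The main point of care is the degenerate case $a=b$ or $N=0$, which is absorbed by the additive constant. Note also that the endpoints must be the original pairs in $S$ rather than projections of $\gamma$; this is handled by fixing $x'_0$ and $x'_N$ as above. No other obstacle is expected, as the argument is the standard one (cf.~\cite{R:lect}*{Lemma 1.10}).
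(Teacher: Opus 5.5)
Your proof is correct, and it is the standard chaining argument behind the result the paper cites without proof (Roe's Lemma 1.10): you subdivide a quasi-geodesic into unit-parameter steps, lift each step to $S$ using bounded density of $p_X(S)$, and sum the bornologous bound $R(S,\lambda+k+2D)$ over $\lceil b-a\rceil \le \lambda(d_X(x_1,x_2)+k)+1$ steps. Only cosmetic points remain: the phrase ``Taking $r=1$'' in (1)$\implies$(2) is extraneous, and you should take $D$ strictly larger than the bounded-density constant so that a point of $p_X(S)$ within distance $D$ is guaranteed to exist.
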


\begin{defn}[Quasi-isometric correspondence]
Let $X$ and $Y$ be metric spaces.
A set $S \subset X \times Y$ is a \emph{quasi-isometric correspondence} if 
$S$ is boundedly dense and there exist constants $C, L \ge 0$ such that
\[
d_Y(y_1,y_2) \le L \,d_X(x_1,x_2) + C \quad\text{and}\quad
d_X(x_1,x_2) \le L \,d_Y(y_1,y_2) + C
\]
for every $(x_i,y_i) \in S$, $i=1,2$.
\end{defn}

Lemma \ref{lem:quasi-corr-half} directly implies the following.

\begin{prop}
Let $X$ and $Y$ be quasi-geodesic metric spaces and $S \subset X \times Y$ be a set.
Then, $S$ is a coarse correspondence if and only if
$S$ is a quasi-isometric correspondence.
\end{prop}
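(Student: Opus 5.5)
The plan is to derive the proposition directly from Lemma \ref{lem:quasi-corr-half}, applied once to $S$ and once to $S^{-1}$. The first thing to note is that both notions require $S$ to be boundedly dense. A quasi-isometric correspondence is boundedly dense by definition. A coarse correspondence is boundedly dense and bi-bornologous by Definition \ref{defn:coarse-equiv}. Bounded density is also symmetric: $S$ is boundedly dense in $X\times Y$ exactly when $S^{-1}$ is boundedly dense in $Y\times X$, since the two projections just swap roles. So throughout we may assume $S$ is boundedly dense, and the task reduces to showing that ``bi-bornologous'' matches the two-sided affine bound.

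For the direction ``coarse $\Rightarrow$ quasi-isometric'', assume $S$ is bornologous. Since $X$ is quasi-geodesic and $S$ is boundedly dense, Lemma \ref{lem:quasi-corr-half} ((2)$\Rightarrow$(1)) gives constants $L_1, C_1$ with $d_Y(y_1,y_2)\le L_1 d_X(x_1,x_2)+C_1$ for all $(x_i,y_i)\in S$. Next apply the same lemma to $S^{-1}\subset Y\times X$. This is legitimate because $Y$ is quasi-geodesic and $S^{-1}$ is boundedly dense and bornologous. It gives $L_2, C_2$ with $d_X(x_1,x_2)\le L_2 d_Y(y_1,y_2)+C_2$. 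Taking $L:=\max(L_1,L_2)$ and $C:=\max(C_1,C_2)$ yields both inequalities in the definition of a quasi-isometric correspondence.

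For the converse, the two affine bounds are exactly condition (1) of Lemma \ref{lem:quasi-corr-half} for $S$ and for $S^{-1}$. Hence both are bornologous, so $S$ is bi-bornologous and boundedly dense, i.e.\ a coarse correspondence. This direction does not even need the lemma or the quasi-geodesic hypothesis: given $r$, one can simply take $R=Lr+C$.

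There is no real obstacle here. The only points needing care are checking that bounded density passes to $S^{-1}$, and that the quasi-geodesic hypothesis is available on whichever space plays the role of the domain in each application of the lemma. This is exactly why both $X$ and $Y$ are assumed quasi-geodesic.
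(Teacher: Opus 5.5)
Your proposal is correct and matches the paper's argument. The paper simply says the proposition follows directly from Lemma \ref{lem:quasi-corr-half}, and you carry this out by applying the lemma to $S$ (using that $X$ is quasi-geodesic) and to $S^{-1}$ (using that $Y$ is quasi-geodesic), after noting that bounded density is symmetric under $S \mapsto S^{-1}$.
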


\begin{defn}[(Weakly) measured quasi-isometric]
Let $X$ and $Y$ be mm-spaces.
We say that $X$ and $Y$ are \emph{measured quasi-isometric} if there exists a coupling $\sigma \in \Pi(\mu_X,\mu_Y)$ such that
$\supp\sigma$ is a quasi-isometric correspondence.
$X$ and $Y$ are \emph{weakly measured quasi-isometric} if there exist
boundedly finite measures $\nu_X$ and $\nu_Y$ on $X$ and $Y$, respectively, with $\nu_X \asymp \mu_X$ and $\nu_Y \asymp \mu_Y$ such that $(X,\nu_X)$ and $(Y,\nu_Y)$ are measured quasi-isometric.
\end{defn}

\begin{defn}[Growth order of functions]
Let $f, g : [\,0,\infty\,) \to [\,0,\infty\,)$ be functions.
We define $f \preceq g$ if there exist constants $a,b,r_0 > 0$ such that
$f(r) \le a g(br)$ for every $r \ge r_0$.
Define $f \asymp g$ if $f \preceq g$ and $g \preceq f$ hold.
$\asymp$ is an equivalence relation.
We call the equivalence class of $f$ the \emph{growth order of $f$}.
\end{defn}

\begin{defn}[Measure growth]
Let $X$ be an mm-space.
For $x \in X$, the growth order of the function $[\,0,\infty\,) \ni r \mapsto \mu_X(B_r(x))$ is called the \emph{measure growth of $X$} (or \emph{of $\mu_X$}).
\end{defn}

Measure growth does not depend on $x$.
If $\mu_X \asymp \nu_X$, then the measure growth of $\mu_X$ coincides with that of $\nu_X$.

\begin{prop} \label{prop:meas-gr-inv}
Measure growth is invariant under weak measured quasi-isometry. In particular, when restricted to the category of quasi-geodesic mm-spaces, it is invariant under weak measured coarse equivalence.
\end{prop}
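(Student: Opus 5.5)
Since $\mu_X\asymp\nu_X$ does not change measure growth, as noted just above, it suffices to treat the case where $\sigma\in\Pi(\mu_X,\mu_Y)$ is a coupling with $S:=\supp\sigma$ a quasi-isometric correspondence, with constants $L,C$ and bounded-density constant $D$. By symmetry it then suffices to prove one inequality, $\mu_Y(B_r(y))\preceq\mu_X(B_r(x))$ for suitable base points. The second assertion is then immediate: between quasi-geodesic spaces, the preceding proposition identifies coarse correspondences with quasi-isometric correspondences, so a weak measured coarse equivalence is a weak measured quasi-isometry.

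To set this up, fix $(x_0,y_0)\in S$; such a pair exists because $S$ is nonempty, being the support of a nonzero coupling. Fix $r$ large. The plan is to bound $\mu_Y(B_r(y_0))$ using the coupling:
\[
\mu_Y(B_r(y_0))=\sigma(X\times B_r(y_0))=\sigma\bigl((X\times B_r(y_0))\cap S\bigr),
\]
where the second equality holds because $\sigma(S^c)=0$. If $(x,y)\in S$ with $d_Y(y,y_0)\le r$, the quasi-isometry inequality gives $d_X(x,x_0)\le Lr+C$. Hence the set above is contained in $B_{Lr+C}(x_0)\times Y$, and therefore
\[
\mu_Y(B_r(y_0))\le\sigma\bigl(B_{Lr+C}(x_0)\times Y\bigr)=\mu_X(B_{Lr+C}(x_0))\le\mu_X(B_{(L+1)r}(x_0))
\]
for $r\ge C$. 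This is exactly $f_Y\preceq f_X$ with $a=1$, $b=L+1$, $r_0=C$. Applying the same argument to the transposed coupling gives the reverse inequality, so the growth orders agree. Independence of the base point, which the paper asserts, lets us compare growth at $x_0$ and at $y_0$.

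There is one point to check. Points $(x,y)$ in the support are handled directly, but we also need that the support carries full measure; this holds because $X\times Y$ is separable. The bounded-density hypothesis is not actually needed for this inequality. I expect no real obstacle here. The only subtle step is the reduction from the weak version: one replaces $\mu_X,\mu_Y$ by $\nu_X,\nu_Y$ and uses that $\asymp$-comparable measures have the same growth, since $\nu(B_r)\le C\mu(B_r)$.
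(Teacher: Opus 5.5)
Your proof is correct and follows the paper's argument: fix $(x_0,y_0)\in S=\supp\sigma$, then use the marginal conditions, the fact that $\sigma(S^c)=0$, and the quasi-isometric inequality on $S$ to obtain $\mu_Y(B_r(y_0))\le\mu_X(B_{Lr+C}(x_0))$ together with its symmetric counterpart. The only difference is cosmetic: you directly contain the Borel set $(X\times B_r(y_0))\cap S$ in $B_{Lr+C}(x_0)\times Y$, whereas the paper passes through the closure $\overline{S[B_r(x_0)]}$ (because $S[A]$ need not be Borel), so your version is slightly more streamlined.
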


\begin{proof}
Assume that $X$ and $Y$ are measured quasi-isometric.
It suffices to prove that the measure growth of $X$ coincides with that of $Y$.
There is $\sigma \in \Pi(\mu_X,\mu_Y)$ such that
$\supp\sigma$ is a quasi-isometric correspondence.

Let $S := \supp\sigma$.
For every Borel set $A \subset X$, since
$(A \times Y) \cap S \subset X \times S[A]$, we have
\begin{equation} \label{eq:mu-SA}
\mu_Y(\overline{S[A]}) = \sigma(X \times \overline{S[A]})
\ge \sigma((A \times Y) \cap S) = \mu_X(A).
\end{equation}

Fix $(x_0,y_0) \in S$.
By \eqref{eq:mu-SA},
\[
\mu_X(B_r(x_0)) \le \mu_Y(\overline{S[B_r(x_0)]}).
\]

We next prove $\overline{S[B_r(x_0)]} \subset B_{Lr+C}(y_0)$,
where $L$ and $C$ are the constants in the definition of quasi-isometric correspondence.
Take an arbitrary $y \in \overline{S[B_r(x_0)]}$.
There is a sequence $\{y_i\}_{i=1}^\infty \subset S[B_r(x_0)]$ converging to $y$.
There is $x_i \in B_r(x_0)$ for each $i$ such that $(x_i,y_i) \in S$.
Since $S$ is a quasi-isometric correspondence,
\[
d_Y(y_0,y_i) \le L\,d_X(x_0,x_i) + C \le Lr + C.
\]
Hence, $d_Y(y_0,y) \le Lr + C$, which implies that
$y$ belongs to $B_{Lr+C}(y_0)$.
This proves $\overline{S[B_r(x_0)]} \subset B_{Lr+C}(y_0)$.

Thus we obtain
\[
\mu_X(B_r(x_0)) \le \mu_Y(B_{Lr+C}(y_0)).
\]
In the same way,
\[
\mu_Y(B_r(y_0)) \le \mu_X(B_{Lr+C}(x_0)).
\]
These inequalities imply that the measure growth of $X$ coincides with that of $Y$.
\end{proof}

\begin{prop} \label{prop:subexp}
If an mm-space $X$ has \emph{subexponential measure growth}, i.e., for some $x \in X$,
\[
\lim_{r\to\infty} \frac{\ln \mu_X(B_r(x))}{r} = 0,
\]
then $X$ is weighted amenable.
\end{prop}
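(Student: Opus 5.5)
The plan is to use balls as Følner sets. Fix $r>0$ and the base point $x$ from the hypothesis, and take $\Omega_n := B_{R_n}(x)$ for a suitable sequence of radii $R_n\to\infty$. Each ball is a bounded Borel set. Since $\mu_X$ has full support, every ball has positive measure, so $\mu_X(\Omega_n)>0$.

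First I control the boundary. If $y \in \partial_r B_R(x)$, then $y$ lies within $r$ of the complement of $B_R(x)$, so $d_X(x,y) \ge R - r$. It also lies within $r$ of $B_R(x)$, so $d_X(x,y)\le R+r$. Hence
\[
\partial_r B_R(x) \subset B_{R+r}(x)\setminus \{\,y \mid d_X(x,y) < R-r\,\},
\]
and therefore
\[
\mu_X(\partial_r B_R(x)) \le f(R+r) - g(R-r),
\]
where $f(s):=\mu_X(B_s(x))$ and $g(s):=\mu_X(\{d_X(x,\cdot)<s\})\ge f(s-\epsilon)$ for every $\epsilon>0$. To keep the bookkeeping simple, I will only use the cruder bound
\[
\mu_X(\partial_r B_R(x)) \le f(R+r) - f(R-2r).
\]
The ratio to be made small is then at most
\[
\frac{f(R+r)-f(R-2r)}{f(R)} \le \frac{f(R+r)}{f(R-2r)} - 1,
\]
using $f(R)\ge f(R-2r)$. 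So it suffices to find radii $R$ with $f(R+r)/f(R-2r)$ arbitrarily close to $1$.

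The key step is to derive this from subexponential growth by a telescoping argument. Put $h:=3r$ and $a_k := \ln f(kh)$ for $k\ge k_0$, where $k_0$ is chosen so that $f(k_0 h)>0$. The sequence $a_k$ is nondecreasing, and the hypothesis gives $a_k/k \to 0$. Suppose there were $\delta>0$ with $a_{k+1}-a_k\ge\delta$ for all large $k$. Then $a_k \ge \delta k - C$, which contradicts $a_k/k \to 0$. Hence $\liminf_k (a_{k+1}-a_k)=0$, and there are $k_n\to\infty$ with $f((k_n+1)h)/f(k_nh)\to 1$.

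Now set $R_n := k_n h + 2r$, so that $R_n-2r = k_nh$ and $R_n+r=(k_n+1)h$. The displayed ratio then tends to $0$, and $\{B_{R_n}(x)\}$ is an $r$-Følner sequence. Since $r$ was arbitrary, $X$ is weighted amenable.

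I expect the main obstacle to be this passage from the limit hypothesis to a subsequence of good radii. The hypothesis only concerns averages of $\ln f$, and it does not say that $f$ is eventually regular. The increment argument above is exactly what handles this, and the remaining care is in handling closed versus open balls in the boundary estimate, which the crude $2r$ shift absorbs. Note that the hypothesis implicitly requires $\mu_X(B_r(x))<\infty$, which holds automatically since $\mu_X$ is boundedly finite.
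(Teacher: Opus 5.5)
Your proof is correct. It is essentially the paper's argument run directly instead of by contraposition: the paper assumes $\mu_X(\Omega)\le C\mu_X(\partial_r\Omega)$, applies it to balls whose $r$-boundaries lie in annuli $B_{a_{n+1}}(x_0)\setminus B_{a_n}(x_0)$, and obtains $\mu_X(B_{a_n}(x_0))\ge(1+C^{-1})^n\mu_X(B_{a_0}(x_0))$, whereas you use the same annulus inclusion to extract radii with $f((k+1)h)/f(kh)\to 1$, which exhibits the F\o lner sets explicitly as balls.
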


\begin{proof}
We prove the contraposition.
Suppose that $X$ is weighted non-amenable.
There are constants $r, C > 0$ such that
\[
\mu_X(\Omega) \le C\mu_X(\partial_r\Omega)
\]
for every bounded Borel set $\Omega \subset X$.

Fix $x_0 \in X$, $\eta>0$ and $a_0>0$, and put
\[
a_n:=a_0+n(2r+\eta),
\qquad
R_n:=a_n+r+\eta
\]
for $n\ge0$.
By the triangle inequality,
\[
\partial_r B_{R_n}(x_0)
\subset
B_{a_{n+1}}(x_0)\setminus B_{a_n}(x_0).
\]
Indeed, if $z\in B_{a_n}(x_0)$, then
$d_X(z,B_{R_n}(x_0)^c)\ge r+\eta>r$, and hence
$z\notin\partial_r B_{R_n}(x_0)$.

Therefore,
\begin{align*}
\mu_X(B_{a_{n+1}}(x_0))
&\ge \mu_X(B_{a_n}(x_0))
 +\mu_X(\partial_r B_{R_n}(x_0))\\
&\ge \mu_X(B_{a_n}(x_0))
 +C^{-1}\mu_X(B_{R_n}(x_0))\\
&\ge (1+C^{-1})\mu_X(B_{a_n}(x_0)).
\end{align*}
It follows that
\[
\mu_X(B_{a_n}(x_0))
\ge (1+C^{-1})^n\mu_X(B_{a_0}(x_0)).
\]
Since $X$ has full support and $a_0>0$, we have
$\mu_X(B_{a_0}(x_0))>0$.
Put $q:=1+C^{-1}>1$.
For $t\in[a_n,a_{n+1})$ and for all sufficiently large $n$, monotonicity gives
\[
\frac{\ln\mu_X(B_t(x_0))}{t}
\ge
\frac{n\ln q+\ln\mu_X(B_{a_0}(x_0))}{a_{n+1}}.
\]
Since $a_{n+1}=a_0+(n+1)(2r+\eta)$, it follows that
\[
\liminf_{t\to\infty}
\frac{\ln\mu_X(B_t(x_0))}{t}
\ge
\frac{\ln(1+C^{-1})}{2r+\eta}>0.
\]
Thus, the measure growth of $X$ is not subexponential.
\end{proof}

\section{Examples}
\label{sec:examples}

In this section, we present several examples showing that amenability as a metric space and weighted amenability do not coincide.

We recall that for a graph equipped with the graph metric and the counting measure, weighted amenability coincides with amenability as a metric space.

\begin{ex}[$\Z$] \label{ex:Z}
We equip the set $\Z$ of integers with the Euclidean metric (i.e., the absolute value of the difference) and the measure
$\mu := \sum_{n \in \Z} 2^n \delta_n$.
Then, the measure $\psi := \sum_{n\in\Z} 2^{n+1} \delta_{(n+1,n)}$ on $\Z^2$ satisfies
\[
\frac{d\psi}{d\mu_{\Z^2}}(n+1,n) = \frac{2^{n+1}}{\sqrt{2^n 2^{n+1}}} = \sqrt{2},
\qquad \partial\psi = \mu,
\]
where $\mu_{\Z^2}$ denotes the measure defined by
\eqref{eq:mu-Gamma-prod} from $\mu$.
Therefore, $\psi$ is a Ponzi scheme and by Theorem \ref{thm:main-Gamma}, $(\Z,\mu)$ is weighted non-amenable and satisfies\\
$H^{(\infty)}_0(\Z,\mu) = 0$.  On the other hand, since the counting measure, denoted by $\mu_c$, has subexponential growth, Proposition \ref{prop:subexp} implies that $(\Z,\mu_c)$ is weighted amenable and satisfies $H^{(\infty)}_0(\Z,\mu_c) \neq 0$.
\end{ex}

\begin{ex}[$\R$]
Let $dx$ denote the one-dimensional Lebesgue measure on $\R$.
For the measure $\mu$ in Example \ref{ex:Z},
since $(\R,e^x dx) \simeq_\wmce (\Z,\mu)$,
$(\R,e^x dx)$ is weighted non-amenable and satisfies $H^{(\infty)}_0(\R,e^x dx) = 0$.
On the other hand, since $(\R,dx) \simeq_\wmce (\Z,\mu_c)$, $(\R,dx)$ is weighted amenable and satisfies $H^{(\infty)}_0(\R,dx) \neq 0$.
\end{ex}

\begin{ex}[Binary tree] \label{ex:tree}
Let $T$ be the rooted binary tree with root $O$,
i.e., $T$ is a connected graph containing no cycles and has exactly two edges emanating from $O$ and exactly three edges emanating from each vertex other than $O$.
We equip $T$ with the graph metric $d_T$.
Let $\rho(x) := d_T(O,x)$, $x \in T$.
Since the measure $\mu := \sum_{x \in T} 2^{-\rho(x)} \delta_x$ has subexponential growth, $(T,\mu)$ is weighted amenable and satisfies $H^{(\infty)}_0(T,\mu) \neq 0$.
On the other hand, for the counting measure $\mu_c$ on $T$, there exists a Ponzi scheme.  By Theorem \ref{thm:main-Gamma}, $(T,\mu_c)$ is weighted non-amenable and satisfies $H^{(\infty)}_0(T,\mu_c) = 0$.
\end{ex}

In order to prove the weighted non-amenability of a hyperbolic space,
we need the following.

\begin{defn}[Cheeger isoperimetric constant]
Let $X$ be an mm-space.
The \emph{Cheeger isoperimetric constant of $X$} is defined by
\[
h(X) := \inf_\Omega \frac{\mu_X^+(\Omega)}{\mu_X(\Omega)},
\]
where $\Omega$ runs over all bounded Borel sets in $X$ with $\mu_X(\Omega) > 0$ and we define
\[
\mu_X^+(\Omega) := \liminf_{\varepsilon\to 0+} \frac{\mu_X(B_\varepsilon(\Omega)) - \mu_X(\Omega)}{\varepsilon}.
\]
\end{defn}

\begin{prop} \label{prop:h}
If an mm-space $X$ satisfies $h(X) > 0$, then $X$ is weighted non-amenable.
\end{prop}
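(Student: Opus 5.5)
We prove the isoperimetric inequality $\mu_X(\Omega)\le C\,\mu_X(\partial_r\Omega)$ for some fixed $r>0$, some constant $C>0$, and every bounded Borel set $\Omega$ with $\mu_X(\Omega)>0$; by the reformulation after Definition \ref{defn:amenable}, this is exactly weighted non-amenability. Set $h:=h(X)>0$. The idea is to integrate the lower bound on the Minkowski content $\mu_X^+$ over a family of enlargements of $\Omega$. We take $r:=1$, although any fixed $r>0$ would do.

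For $0\le t\le r$, let $\Omega_t:=B_t(\Omega)$, a closed set that is bounded, and let $g(t):=\mu_X(\Omega_t)$. The function $g$ is nondecreasing and finite, since $\mu_X$ is boundedly finite. Because $B_\varepsilon(B_t(\Omega))\subset B_{t+\varepsilon}(\Omega)$, we get
\[
\liminf_{\varepsilon\to0+}\frac{g(t+\varepsilon)-g(t)}{\varepsilon}\ \ge\ \mu_X^+(\Omega_t)\ \ge\ h\,\mu_X(\Omega_t)\ \ge\ h\,\mu_X(\Omega)
\]
for every $t\in[0,r)$. The lower right Dini derivative of the monotone function $g$ is therefore at least $h\mu_X(\Omega)$ everywhere on $[0,r)$.

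The main technical step is to pass from this pointwise Dini bound to the increment bound $g(r)-g(0)\ge h\,r\,\mu_X(\Omega)$. For a monotone function this follows from a standard fact: if the lower right Dini derivative is at least $c$ everywhere on an interval, then the function minus $ct$ is nondecreasing. This is proved by a short sup-argument, or equivalently by applying the mean-value-type lemma to $g(t)-(c-\delta)t$ and letting $\delta\to0$. I expect this to be the only delicate point. It cannot be replaced by integrating $g'$, because monotone functions satisfy only $\int g'\le$ increment, which is the wrong direction. The Dini-derivative monotonicity lemma avoids that problem.

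Finally, $B_r(\Omega)\setminus\Omega\subset\partial_r\Omega$. Indeed, a point $x\in B_r(\Omega)\setminus\Omega$ lies in $B_r(\Omega)$, and it lies in $\Omega^c\subset B_r(\Omega^c)$. Therefore
\[
\mu_X(\partial_r\Omega)\ \ge\ g(r)-\mu_X(\Omega)\ \ge\ g(r)-g(0)\ \ge\ h\,r\,\mu_X(\Omega).
\]
This gives the isoperimetric inequality with $C=(hr)^{-1}$, and hence $X$ is weighted non-amenable.
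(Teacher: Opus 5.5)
Your proof is correct and follows the paper's route: a lower bound on the right Dini derivative of $t\mapsto\mu_X(B_t(\Omega))$, a comparison lemma for monotone functions, and $B_r(\Omega)\setminus\Omega\subset\partial_r\Omega$. The only difference is that you use the cruder bound $\mu_X(\Omega_t)\ge\mu_X(\Omega)$, which gives linear growth and the constant $(h(X)r)^{-1}$. The paper instead applies its Gronwall lemma (Lemma~\ref{lem:Gronwall}) to get $\mu_X(\Omega_t)\ge e^{h(X)t}\mu_X(\Omega)$ and the constant $(e^{h(X)r}-1)^{-1}$. One aside of yours is wrong, though harmless: integrating $g'$ would in fact work. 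For nondecreasing $g$, Lebesgue's theorem gives $g(r)-g(0)\ge\int_0^r g'(t)\,dt\ge h(X)\,r\,\mu_X(\Omega)$, and that inequality points in exactly the direction you need.
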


\begin{proof}
Assume $h(X) > 0$.
Let $\Omega \subset X$ be a bounded Borel set with $\mu_X(\Omega) > 0$ and set $\Omega_t := B_t(\Omega)$ for $t \ge 0$.
Since $\mu_X^+(\Omega_t)$ is the Dini derivative of $\mu_X(\Omega_t)$
and since $\mu_X^+(\Omega_t) \ge h(X) \mu_X(\Omega_t)$,
the Gronwall-type inequality (Lemma \ref{lem:Gronwall}) implies $\mu_X(\Omega_t) \ge e^{h(X)t} \mu_X(\Omega)$.
For every $0 < r' < r$,
since $\Omega_{r'} \subset \Omega \cup \partial_r\Omega$, we have
\[
e^{h(X)r'} \mu_X(\Omega) \le \mu_X(\Omega_{r'}) \le \mu_X(\Omega) + \mu_X(\partial_r\Omega).
\]
Taking the limit as $r' \nearrow r$ yields
\[
\mu_X(\Omega) \le \frac{1}{e^{h(X) r} - 1} \mu_X(\partial_r\Omega).
\]
Thus, $X$ is weighted non-amenable.
\end{proof}

\begin{cor} \label{cor:Had}
Let $M$ be an Hadamard manifold with sectional curvature $K \le -a^2$, where $a$ is a positive constant.  Then we have the following.
\begin{enumerate}
\item
$M$ is weighted non-amenable.
\item
Moreover, if the sectional curvature of $M$ is bounded from below, then $H^{(\infty)}_0(M) = 0$.
\end{enumerate}
\end{cor}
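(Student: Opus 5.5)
For part (1), I will use Proposition \ref{prop:h}, so it suffices to show $h(M)>0$. For a Hadamard manifold with $K\le -a^2$, the classical McKean inequality gives a lower bound on the Cheeger constant: every compact domain $\Omega$ with smooth boundary satisfies $\vol(\partial\Omega)\ge (n-1)a\,\vol(\Omega)$, where $n=\dim M$. I would prove this in the standard way. Fix a point $o$ and let $\rho=d(o,\cdot)$, which is smooth away from $o$ since $M$ is Hadamard. The Hessian comparison theorem for $K\le -a^2$ gives $\Delta\rho\ge (n-1)a\coth(a\rho)\ge (n-1)a$ away from $o$. Integrate $\Delta\rho$ over $\Omega$ and apply the divergence theorem with $|\nabla\rho|=1$. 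This yields $(n-1)a\vol(\Omega)\le \vol(\partial\Omega)$. The singularity at $o$ is harmless, because $\rho$ is Lipschitz and $\Delta\rho\ge0$ holds in the distributional sense there; alternatively, one can remove a small ball around $o$ and check that its contribution tends to zero.

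The next step is to pass from smooth domains to arbitrary bounded Borel sets. The quantity $\mu^+(\Omega)$ in the paper is the lower outer Minkowski content, not the area of the boundary, and this passage is the step I expect to be the main technical obstacle. To avoid it, I would not work with smooth approximations. Instead I would run the same divergence argument directly on the sets $\Omega_t=B_t(\Omega)$, using the coarea formula for the $1$-Lipschitz function $d(\cdot,\Omega)$. Concretely, I would integrate $\Delta\rho$ against a cutoff $\chi_\varepsilon(d(\cdot,\Omega))$, where $\chi_\varepsilon$ decreases linearly from $1$ to $0$ on $[0,\varepsilon]$. Integrating by parts gives
\[
(n-1)a\,\mu(\Omega)\le \int \chi_\varepsilon\,\Delta\rho
= \frac1\varepsilon\int_{\Omega_\varepsilon\setminus\Omega}\langle\nabla\rho,\nabla d(\cdot,\Omega)\rangle
\le \frac{\mu(\Omega_\varepsilon)-\mu(\Omega)}{\varepsilon}.
\]
Taking $\liminf$ as $\varepsilon\to0$ gives $h(M)\ge (n-1)a>0$. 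Proposition \ref{prop:h} then yields weighted non-amenability.

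For part (2), assume in addition that $K$ is bounded below. Then the Ricci curvature is bounded below, so $M$ is large-scale doubling by Bishop--Gromov. Corollary \ref{cor:Ric-amenable} (equivalently Theorem \ref{thm:main2}) applies, and by part (1) it gives $H^{(\infty)}_0(M)=0$.
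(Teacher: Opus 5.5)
Your proof is correct. It has the same skeleton as the paper's: Proposition \ref{prop:h} gives (1), and Corollary \ref{cor:Ric-amenable} gives (2).

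The difference is how you obtain $h(M)\ge (n-1)a$. The paper simply cites Yau's estimate, which is stated for the classical Cheeger constant defined through boundary areas of smooth domains. You instead prove the bound directly for the quantity the paper actually uses: the lower outer Minkowski content of an arbitrary bounded Borel set. You do this by testing $\Delta\rho\ge (n-1)a$ against the Lipschitz cutoff $\chi_\varepsilon(d(\cdot,\Omega))$. This makes explicit the passage from smooth domains to Borel sets that the citation leaves implicit.

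Your integration by parts is sound. We have $|\nabla d(\cdot,\Omega)|\le 1$ a.e., and the gradient of the cutoff vanishes a.e.\ on $\overline{\Omega}$. Any singular part of the distributional Laplacian of $\rho$ at $o$ is nonnegative, so it only helps the inequality.

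Your inequality is also stronger than you use. It reads
\[
(n-1)a\,\varepsilon\,\mu(\Omega)\le \mu(B_\varepsilon(\Omega))-\mu(\Omega),
\]
and it holds for every fixed $\varepsilon>0$, not just in the limit. Since $B_r(\Omega)\setminus\Omega\subset\partial_r\Omega$, taking $\varepsilon=r$ gives $\mu(\Omega)\le \frac{1}{(n-1)ar}\,\mu(\partial_r\Omega)$ directly. So you could bypass Proposition \ref{prop:h} and the Gronwall-type Lemma \ref{lem:Gronwall} altogether.

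The paper's route is shorter. Yours is self-contained and matched to the paper's own definitions.
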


Note that the Ricci curvature of $M$ is bounded from below if and only if the sectional curvature of $M$ is bounded from below.

\begin{proof}
We prove (1).
By \cite{Y:isop}*{Prop.~3}, $h(M) \ge (n-1)a$.  This together with Proposition \ref{prop:h} implies that $M$ is weighted non-amenable.

(2) follows from (1) and Corollary \ref{cor:Ric-amenable}.
\end{proof}

\begin{ex}[Hyperbolic space] \label{ex:hyp-sp}
By Corollary \ref{cor:Had},
for $n \ge 2$, the $n$-dimensional complete simply connected hyperbolic space $H^n$ is weighted non-amenable and satisfies $H^{(\infty)}_0(H^n) = 0$.
On the other hand, if $\rho$ denotes the distance function from a fixed point in $H^n$, then the weighted volume measure $\nu := e^{-(n-1)\rho} \vol_{H^n}$ has subexponential growth and hence $(H^n,\nu)$ is weighted amenable and satisfies $H^{(\infty)}_0(H^n,\nu) \neq 0$.
\end{ex}

\section{Appendix} \label{sec:appendix}

Tessera \cite{Ts:large-sob} defined the following.

\begin{defn}[Large-scale equivalent]
We say that two large-scale doubling mm-spaces $X$ and $Y$ are \emph{large-scale equivalent} if there exists a map $F : X \to Y$ with the following properties.
\begin{enumerate}
\item[(a)]
For every $r > 0$ there exists $R = R(r) > 0$ such that for any $x,x' \in X$,
\begin{enumerate}
\item[(a1)] $d_X(x,x') \le r$ $\implies$ $d_Y(F(x),F(x')) \le R$,
\item[(a2)] $d_Y(F(x),F(x')) \le r$ $\implies$ $d_X(x,x') \le R$.
\end{enumerate}
\item[(b)] There exists a constant $C > 0$ such that $B_C(F(X)) = Y$.
\item[(c)] There exists $r_0 > 0$ such that for every $r \ge r_0$ and $x \in X$,
\[
\mu_X(B_r(x)) \asymp_r \mu_Y(B_r(F(x))).
\]
\end{enumerate}

Such a map $F : X \to Y$ is called a \emph{large-scale equivalence}.
\end{defn}

Letting $\graph F := \{\,(x,F(x)) \mid x \in X\,\}$, we note that
(a) is equivalent to the bi-bornologous property of $\graph F$, and that (b) is equivalent to the boundedly dense property of $\graph F$.

We prove the following.

\begin{thm} \label{thm:wmce-large-scale}
Let $X$ and $Y$ be large-scale doubling mm-spaces.
Then the following are equivalent.
\begin{enumerate}
\item $X \simeq_\wmce Y$.
\item
$X$ and $Y$ are large-scale equivalent.
\end{enumerate}
\end{thm}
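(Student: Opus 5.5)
We prove the two implications separately. Direction (1)$\Rightarrow$(2) is a direct comparison of ball measures through the coupling. Direction (2)$\Rightarrow$(1) goes through the BGR discretizations of Lemma \ref{lem:doubling-Gamma} and the transitivity in Proposition \ref{prop:wmce-equiv}.

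\emph{(1)$\Rightarrow$(2).} Take $\nu_X\asymp\mu_X$, $\nu_Y\asymp\mu_Y$ and a measured coarse correspondence $\sigma\in\Pi(\nu_X,\nu_Y)$, and put $S:=\supp\sigma$. Since $S$ is boundedly dense, for each $x\in X$ there is $(x',y')\in S$ with $d_X(x,x')\le C$. We set $F(x):=y'$; no measurability of $F$ is needed in Tessera's definition. Then $\graph F$ lies within bounded distance of $S$, so it is bi-bornologous and boundedly dense, which gives (a) and (b).

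For (c), we argue as in the proof of Proposition \ref{prop:meas-gr-inv}. First,
\[
\mu_X(B_r(x))\asymp \nu_X(B_r(x))\le \nu_Y\bigl(\overline{S[B_r(x)]}\bigr).
\]
Bornologousness of $S$ gives $\overline{S[B_r(x)]}\subset B_{R(r)}(F(x))$. Hence
\[
\mu_X(B_r(x))\lesssim \mu_Y(B_{R(r)}(F(x)))\le C_{r,R(r)}\,\mu_Y(B_r(F(x)))
\]
for $r\ge r_0$, by \eqref{eq:CrR}. The reverse inequality follows symmetrically using $S^{-1}$. Here $S^{-1}[B_r(F(x))]\subset B_{R'}(x)$, because $(x',F(x))\in S$ with $d_X(x,x')\le C$. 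We then apply \eqref{eq:CrR} in $X$.

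\emph{(2)$\Rightarrow$(1).} By Lemma \ref{lem:doubling-Gamma}, take BGR discretizations $\Gamma\subset X$ and $\Lambda\subset Y$ with nearest point maps $\pi_X,\pi_Y$, so that $X\simeq_\mce\Gamma$ and $Y\simeq_\mce\Lambda$. By Proposition \ref{prop:wmce-equiv}, it suffices to show $\Gamma\simeq_\wmce\Lambda$. Let $f:=\pi_Y\circ F|_\Gamma:\Gamma\to\Lambda$. It is bi-bornologous and coarsely surjective.

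The key comparison is
\[
\mu_\Gamma(x)\asymp\mu_\Lambda(f(x)) \quad\text{uniformly in } x\in\Gamma.
\]
To obtain it, we use $\mu_X(B_{r_0}(x))\le\mu_\Gamma(x)\le\mu_X(B_{2r_0}(x))$ from the proof of Lemma \ref{lem:doubling-Gamma}, together with \eqref{eq:CrR}. This gives $\mu_\Gamma(x)\asymp\mu_X(B_r(x))$ for a fixed large $r$. Condition (c) transfers this to $\mu_Y(B_r(F(x)))$, and \eqref{eq:CrR} on $Y$ converts that to $\mu_\Lambda(f(x))$, since $d_Y(F(x),f(x))$ is bounded. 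Fiber sizes $\#f^{-1}(y)$ are uniformly bounded by (a2) and condition BG.

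For each $y\in\Lambda\setminus f(\Gamma)$ we choose $x(y)\in\Gamma$ with $d(f(x(y)),y)\le C''$, and define
\[
\sigma:=\sum_{x\in\Gamma}\mu_\Gamma(x)\delta_{(x,f(x))}+\sum_{y\notin f(\Gamma)}\mu_\Lambda(y)\delta_{(x(y),y)}.
\]
Its marginals are
\[
\nu_\Gamma=\mu_\Gamma+\sum_{y}\mu_\Lambda(y)\delta_{x(y)},\qquad
\nu_\Lambda(y)=\sum_{x\in f^{-1}(y)}\mu_\Gamma(x)\ \text{ or }\ \mu_\Lambda(y).
\]
Each $x$ equals $x(y)$ for boundedly many $y$, by BG in $\Lambda$ and (a2). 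Each such $y$ satisfies $\mu_\Lambda(y)\asymp\mu_\Lambda(f(x))\asymp\mu_\Gamma(x)$ by BR. Hence $\nu_\Gamma\asymp\mu_\Gamma$ and $\nu_\Lambda\asymp\mu_\Lambda$. The support of $\sigma$ is $\graph f$ together with pairs at bounded distance from it, so it is a coarse correspondence. Therefore $\Gamma\simeq_\wmce\Lambda$.

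The main obstacle is the bookkeeping of the uniform comparability constants in the chain
\[
\mu_\Gamma\asymp\mu_X(B_r)\asymp\mu_Y(B_r(F))\asymp\mu_\Lambda\circ f.
\]
Condition (c) holds only for $r\ge r_0$, so every comparison must be routed through a single fixed radius that is at least the doubling thresholds of both $X$ and $Y$ and also the discretization scale.
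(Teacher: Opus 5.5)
Your proof is correct. Direction (1)$\Rightarrow$(2) is essentially the paper's argument. The paper builds the same $F$ (Lemma \ref{lem:coarse-correspondence-map}) and compares balls at the pair $(x',F(x))\in S$ via Lemma \ref{lem:ball-volume}, then moves from $x'$ to $x$ by doubling; your direct estimate at $x$ is equivalent. There are two harmless imprecisions. First, the radius in $\overline{S[B_r(x)]}\subset B_R(F(x))$ is really $R(r+C)$, since $F(x)$ is paired with $x'$ rather than $x$. Second, bounded density of $F(X)$ needs $S$ to lie within bounded distance of $\graph F$, not just the converse; this follows from bornologousness of $S$.

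For (2)$\Rightarrow$(1) you take a genuinely different route. The paper stays on $X$ and $Y$:
\begin{enumerate}
\item It replaces $F$ by a Borel map that is constant on the pieces of a countable partition of $X$ into sets of uniformly bounded diameter.
\item It sets $\sigma=\int_X K_x\,d\mu_X(x)$ with $K_x=\mu_Y|_{B_t(F(x))}/\mu_Y(B_t(F(x)))$.
\item It shows that the density $h$ of $(p_Y)_\bullet\sigma$ satisfies $h\asymp 1$, using $B_s(x_y)\subset F^{-1}(B_t(y))\subset B_{R(t+C)}(x_y)$.
\end{enumerate}
This gives a coupling in $\Pi(\mu_X,\nu_Y)$, so only $\mu_Y$ is altered, and neither discretization nor transitivity is used.

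You instead pass to the BGR nets of Lemma \ref{lem:doubling-Gamma}. There measurability is automatic, and the coupling is an explicit atomic measure on $\graph f$ plus boundedly many extra atoms per point of $\Gamma$. The verification becomes elementary counting with BG and BR. The cost is that you rely on the bounds $\mu_X(B_{r_0}(x))\le\mu_\Gamma(x)\le\mu_X(B_{2r_0}(x))$ from the proof of that lemma, and on the transitivity in Proposition \ref{prop:wmce-equiv}, which itself uses disintegration. Also, both measures end up modified.

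Your counting is right. One small point: the points $y$ with $x(y)=x$ all lie in $B_{C''}(f(x))$, so BG in $\Lambda$ alone bounds their number, and the appeal to (a2) there is unnecessary.
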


To prove the theorem we need the following two lemmas.

\begin{lem}\label{lem:ball-volume}
Let $\sigma\in\Pi(\mu_X,\mu_Y)$ be a measured coarse correspondence.
Then, there exists $r_0 > 0$ such that for every $r \ge r_0$ and $(x,y)\in \supp\sigma$,
\[
\mu_X(B_r(x)) \asymp_r \mu_Y(B_r(y)).
\]
\end{lem}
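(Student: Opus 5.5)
The plan is to compare measures of balls through the coupling, exactly as in the proof of Proposition \ref{prop:meas-gr-inv}, and then use the large-scale doubling condition to shrink the larger ball back to radius $r$. Throughout I assume, as in the surrounding context (Theorem \ref{thm:wmce-large-scale}), that $X$ and $Y$ are large-scale doubling. This hypothesis is genuinely needed, since without it there is no way to pass from radius $R(r)$ back to radius $r$. I take $r_0$ to be the maximum of the constants $r_0$ in \eqref{eq:CrR} for $X$ and for $Y$.

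First, let $S:=\supp\sigma$. For every Borel $A\subset X$, the inequality \eqref{eq:mu-SA} gives
\[
\mu_Y(\overline{S[A]})\ge \mu_X(A).
\]
Its proof only uses that $\sigma$ is a coupling with $\sigma(S^c)=0$. Now fix $r\ge r_0$ and $(x,y)\in S$. Since $S$ is bornologous, there is $R=R(S,r)$, which I may enlarge so that $R\ge r$, such that $(x',y')\in S$ and $d_X(x,x')\le r$ imply $d_Y(y,y')\le R$. Hence $S[B_r(x)]\subset B_R(y)$. Because $B_R(y)$ is a closed ball, the closure is also contained in it, and therefore
\[
\mu_X(B_r(x))\le \mu_Y(B_R(y)).
\]

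Second, I apply \eqref{eq:CrR} in $Y$ with radii $r_0\le r\le R$. This gives $\mu_Y(B_R(y))\le C_{r,R}\,\mu_Y(B_r(y))$. Since $R$ depends only on $r$ and $\sigma$, the combined constant depends only on $r$, and we obtain $\mu_X(B_r(x))\le C_r\,\mu_Y(B_r(y))$.

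Third, the reverse inequality follows by the same argument applied to the transposed coupling $\sigma^{-1}\in\Pi(\mu_Y,\mu_X)$. Its support is $S^{-1}$, which is bornologous because $S$ is bi-bornologous. Then doubling in $X$ finishes the argument.

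The only delicate points are bookkeeping. One must check that the closure in \eqref{eq:mu-SA} does no harm, which holds because closed balls are closed. One must also ensure that the constants are uniform in $(x,y)\in S$. This holds because both $R(S,r)$ and $C_{r,R}$ are independent of the base point. I expect no real obstacle beyond making explicit that the large-scale doubling hypothesis is in force.
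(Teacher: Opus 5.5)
Your proposal is correct and matches the paper's proof step by step. The steps are: the coupling estimate $\mu_X(B_r(x))\le\mu_Y(\overline{S[B_r(x)]})$, the bornologous inclusion $S[B_r(x)]\subset B_R(y)$ with $R\ge r$, the doubling bound \eqref{eq:CrR} in $Y$, and the symmetric argument via $S^{-1}$. You are also right that large-scale doubling of $X$ and $Y$ is an implicit standing hypothesis, which the paper's choice of $r_0$ from \eqref{eq:CrR} for both spaces presupposes.
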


\begin{proof}
Let $r_0$ be no smaller than the constants in \eqref{eq:CrR}
for both $X$ and $Y$.
Fix $r \ge r_0$ and let $(x,y)\in S := \supp\sigma$.
Since
$\supp\sigma\cap(B_r(x)\times Y)
\subset
B_r(x)\times\overline{S[B_r(x)]}$,
we have
\[
\mu_X(B_r(x))
=\sigma(B_r(x)\times Y)
\le
\sigma(X\times\overline{S[B_r(x)]})
=\mu_Y(\overline{S[B_r(x)]}).
\]
Since $S$ is bornologous, there exists $R=R(r)>0$ such that $R \ge r$ and
$S[B_r(x)]\subset B_R(y)$.
Hence
\[
\mu_X(B_r(x))
\le
\mu_Y(B_R(y))
\le
C_{r,R(r)} \mu_Y(B_r(y)),
\]
where the second inequality follows from the large-scale doubling property.

Applying the same argument to $S^{-1}$ completes the proof.
\end{proof}

\begin{lem} \label{lem:coarse-correspondence-map}
Let $X$ and $Y$ be metric spaces and let $S \subset X \times Y$ be a
coarse correspondence. Then there exists a map $F : X\to Y$ such that
$\graph F$ is a coarse correspondence
contained in a bounded neighborhood of $S$.
\end{lem}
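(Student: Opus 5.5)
The natural approach is to build $F$ by choosing, for each $x\in X$, a point of $S$ lying over some point close to $x$. Let $C>0$ be the bounded-density constant of $S$, so that the $C$-neighborhoods of $p_X(S)$ and $p_Y(S)$ are $X$ and $Y$, respectively. For each $x\in X$ pick (by the axiom of choice; no measurability is required) a point $x'\in p_X(S)$ with $d_X(x,x')\le C$ and a point $y'\in Y$ with $(x',y')\in S$, and set $F(x):=y'$. Then each $(x,F(x))$ lies within $\ell^1$-distance $C$ of the point $(x',F(x))\in S$. Hence $\graph F$ is contained in the closed $C$-neighborhood of $S$ in $X\times Y$.

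Next I verify that $\graph F$ is bornologous. Let $d_X(x_1,x_2)\le r$. Then $d_X(x_1',x_2')\le r+2C$, and since $(x_i',F(x_i))\in S$, the bornologous property of $S$ gives $d_Y(F(x_1),F(x_2))\le R(S,r+2C)$. The same computation gives the inverse direction: if $d_Y(F(x_1),F(x_2))\le r$, then bornologousness of $S^{-1}$ yields $d_X(x_1',x_2')\le R(S^{-1},r)$, and therefore $d_X(x_1,x_2)\le R(S^{-1},r)+2C$. So $\graph F$ is bi-bornologous.

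It remains to check bounded density; this is the only step needing a small argument. Clearly $p_X(\graph F)=X$. For $Y$, take $y\in Y$. Choose $(x_0,y_0)\in S$ with $d_Y(y,y_0)\le C$. The points $(x_0',F(x_0))$ and $(x_0,y_0)$ both lie in $S$, and $d_X(x_0',x_0)\le C$. By bornologousness of $S$ this gives $d_Y(F(x_0),y_0)\le R(S,C)$. Hence $d_Y(y,F(x_0))\le C+R(S,C)$, so the $(C+R(S,C))$-neighborhood of $F(X)$ is $Y$. Together with the previous paragraph, $\graph F$ is a coarse correspondence contained in a bounded neighborhood of $S$.
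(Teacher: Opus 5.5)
Your proof is correct and is essentially the paper's argument. The paper makes the same choice of $F(x)$ via a point of $S$ over some $x^\ast$ with $d_X(x,x^\ast)\le C$, proves bi-bornologousness with the same $r+2C$ and $R+2C$ estimates, and proves bounded density the same way (it applies bornologousness of $S$ at scale $C+1$ rather than $C$, which is immaterial).
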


\begin{proof}
Since $S$ is boundedly dense, there exists $C>0$ such that
$B_C(p_X(S))=X$ and $B_C(p_Y(S))=Y$. For every $x\in X$, choose
$(x^\ast,y^\ast)\in S$ such that $d_X(x,x^\ast)\le C$, and define
$F(x):=y^\ast$. Thus $(x^\ast,F(x))\in S$ and
\[
d_{X\times Y}\bigl((x,F(x)),(x^\ast,F(x))\bigr)
=d_X(x,x^\ast)\le C,
\]
where $X\times Y$ is equipped with the $\ell^1$ metric. Consequently,
$\graph F\subset B_C(S)$.

We first show that $\graph F$ is bornologous. Fix $r>0$ and suppose
$d_X(x_1,x_2) \le r$. Then
$d_X(x_1^\ast,x_2^\ast) \le r+2C$. Since
$(x_i^\ast,F(x_i))\in S$ for $i=1,2$ and $S$ is bornologous, there
exists $R>0$, depending only on $r$, such that
$d_Y(F(x_1),F(x_2)) \le R$. Hence $\graph F$ is bornologous.

We next show that $(\graph F)^{-1}$ is bornologous.  We fix $r>0$ and
suppose $d_Y(F(x_1),F(x_2)) \le r$. Since
$(F(x_i),x_i^\ast)\in S^{-1}$ for $i=1,2$ and $S^{-1}$ is
bornologous, there exists $R>0$, depending only on $r$, such that
$d_X(x_1^\ast,x_2^\ast) \le R$. Therefore
\[
d_X(x_1,x_2)
\le d_X(x_1,x_1^\ast)+d_X(x_1^\ast,x_2^\ast)
   +d_X(x_2^\ast,x_2)
\le 2C+R.
\]
Thus $(\graph F)^{-1}$ is bornologous, and hence $\graph F$ is
bi-bornologous.

We finally show that $\graph F$ is boundedly dense.
Let $y \in Y$. Since $B_C(p_Y(S)) = Y$, there exists
$(x,y^\ast) \in S$ such that $d_Y(y,y^\ast)\le C$. By the definition of
$F$, there exists $x^\ast\in X$ such that $d_X(x,x^\ast)\le C$ and
$(x^\ast,F(x))\in S$. Applying the bornologousness of $S$ at the scale
$C+1$, we obtain a constant $R>0$, independent of $x$ and $y$, such
that $d_Y(y^\ast,F(x)) \le R$. Hence
$d_Y(y,F(x)) \le C+R$, and therefore $B_{C+R}(F(X))=Y$.
\end{proof}

\begin{proof}[Proof of Theorem \ref{thm:wmce-large-scale}]
We prove (1)$\implies$(2).
Suppose that $X \simeq_\wmce Y$.
Then there exist Borel measures
$\nu_X\asymp\mu_X$ and $\nu_Y\asymp\mu_Y$
such that $(X,\nu_X) \simeq_\mce (Y,\nu_Y)$.
Let $\sigma\in\Pi(\nu_X,\nu_Y)$ be a measured coarse correspondence and put $S:=\supp\sigma$.

Let $F : X \to Y$ be the map constructed in the proof of Lemma \ref{lem:coarse-correspondence-map}.
$F$ satisfies {\rm (a)} and {\rm (b)}.
Let $C>0$ be the constant used in the construction of $F$ in the proof
of Lemma \ref{lem:coarse-correspondence-map}.
Choose a sufficiently large $r>0$ and $x\in X$.
There is $x'\in p_X(S)$ such that
$d_X(x,x')\le C$ and $(x',F(x))\in S$.
By Lemma~\ref{lem:ball-volume},
$\nu_X(B_r(x')) \asymp_r \nu_Y(B_r(F(x)))$.
Since $\nu_X \asymp \mu_X$ and $X$ is large-scale doubling,
$(X,\nu_X)$ is also large-scale doubling.
Hence $\nu_X(B_r(x)) \asymp_r \nu_X(B_r(x'))$.
Therefore $\nu_X(B_r(x)) \asymp_r \nu_Y(B_r(F(x)))$.
Since $\nu_X\asymp\mu_X$ and $\nu_Y\asymp\mu_Y$, we obtain
\[
\mu_X(B_r(x))
\asymp_r
\mu_Y(B_r(F(x))).
\]
Thus $F$ satisfies {\rm (c)}, and hence $X$ and $Y$ are large-scale equivalent.

We prove (2)$\implies$(1).
Let $F:X\to Y$ be a large-scale equivalence.
We divide $X$ into at most countably many Borel sets $B_i$
such that the diameter of $B_i$ is uniformly bounded.
Choose a point $x_i \in B_i$ for each $i$.
Define a map $F' : X \to Y$ by
$F'(x) := F(x_i)$ for $x \in B_i$.
Then $F'$ is Borel measurable and satisfies
$d_Y(F(x),F'(x)) \le C$ for every $x \in X$ and for a constant $C > 0$.  Since $F'$ is also a large-scale equivalence,
we replace $F$ by $F'$ and assume that $F$ is Borel measurable.

Let $R(r)$ be the constant for $r > 0$ as in (a)
and let $C>0$ be the constant in {\rm (b)}.
Let $r_0 > 0$ be so large that condition {\rm (c)} and the large-scale
doubling condition for both $X$ and $Y$ hold for every $r \ge r_0$.
Choose $s \ge r_0$ and $t \ge \max\{s,C+R(s)\}$.
For each $x\in X$, define a probability measure
\[
K_x:= \frac{\mu_Y|_{B_t(F(x))}}{\mu_Y(B_t(F(x)))}.
\]
Define a Borel measure $\sigma$ on $X\times Y$ by
\[
\sigma(A\times B) := \int_A K_x(B)\,d\mu_X(x).
\]
Since $K_x(Y)=1$, we have $(p_X)_\bullet\sigma=\mu_X$.
Let $\nu_Y:=(p_Y)_\bullet\sigma$. Then
$\nu_Y=h\,\mu_Y$, where
\[
h(y) := \int_X \frac{I_{B_t(F(x))}(y)}{\mu_Y(B_t(F(x)))}\,d\mu_X(x),
\]
and $I_B$ is the indicator function with respect to a set $B$.
Fix $y\in Y$.
If $x\in F^{-1}(B_t(y))$, then $d_Y(F(x),y) \le t$. Hence the large-scale doubling condition implies
$\mu_Y(B_t(F(x)))\asymp\mu_Y(B_t(y))$
uniformly for $x \in F^{-1}(B_t(y))$. Therefore
\[
h(y) \asymp \frac{1}{\mu_Y(B_t(y))} \int_X I_{B_t(F(x))}(y) \, d\mu_X(x)
=
\frac{\mu_X(F^{-1}(B_t(y)))}{\mu_Y(B_t(y))}.
\]

Choose $x_y\in X$ satisfying
$d_Y(F(x_y),y) \le C$.
If $z\in B_s(x_y)$, then
$d_Y(F(z),y)\le d_Y(F(z),F(x_y))+d_Y(F(x_y),y) \le R(s)+C \le t$.
Hence
\begin{equation} \label{eq:B-F}
B_s(x_y)\subset F^{-1}(B_t(y)).
\end{equation}

Conversely, if $z \in F^{-1}(B_t(y))$, then
$d_Y(F(z),F(x_y)) \le d_Y(F(z),y) + d_Y(y,F(x_y))$ $\le t+C$, and therefore
$d_X(z,x_y) \le R(t+C)$, which implies
\begin{equation} \label{eq:F-B}
F^{-1}(B_t(y)) \subset B_{R(t+C)}(x_y).
\end{equation}

Since $X$ is large-scale doubling and by \eqref{eq:B-F} and \eqref{eq:F-B},
$\mu_X(F^{-1}(B_t(y)))\asymp\mu_X(B_t(x_y))$.
By {\rm (c)}, $\mu_X(B_t(x_y))\asymp\mu_Y(B_t(F(x_y)))$.
Since $d_Y(F(x_y),y) \le C$, the large-scale doubling condition gives
$\mu_Y(B_t(F(x_y)))\asymp\mu_Y(B_t(y))$.
Hence we obtain
$\mu_X(F^{-1}(B_t(y))) \asymp \mu_Y(B_t(y))$, and consequently
$h(y)\asymp1$.
Therefore $\nu_Y = (p_Y)_\bullet\sigma\asymp\mu_Y$.

Put
$E_t:=\{(x,y)\in X\times Y\mid d_Y(F(x),y) \le t\}$.
Since $\sigma$ is concentrated on $E_t$,
we have $\supp\sigma\subset\overline{E_t}$.
Let $L:=R(1)$.
If $(x,y)\in\overline{E_t}$, then there exists a sequence
$(x_n,y_n)\in E_t$ converging to $(x,y)$.
For sufficiently large $n$, we have $d_X(x_n,x) \le 1$, and hence
\[
d_Y(F(x),y)
\le
d_Y(F(x),F(x_n))
+d_Y(F(x_n),y_n)
+d_Y(y_n,y)
\le
L+t+o(1).
\]
Thus
$\supp\sigma\subset
T := \{(x,y)\in X\times Y\mid d_Y(F(x),y)\le t+L\}$.
We claim that $T$ is bi-bornologous. Indeed, if
$(x_i,y_i)\in T$ for $i=1,2$ and $d_X(x_1,x_2) \le r$, then
\[
d_Y(y_1,y_2)
\le d_Y(y_1,F(x_1))+d_Y(F(x_1),F(x_2))
   +d_Y(F(x_2),y_2)
\le 2(t+L)+R(r),
\]
where $R(r)$ is given by the bornologousness of $\graph F$.
Conversely, if $d_Y(y_1,y_2) \le r$, then
$d_Y(F(x_1),F(x_2)) \le 2(t+L)+r$, and the bornologousness of
$(\graph F)^{-1}$ implies that $d_X(x_1,x_2)$ is bounded by a constant
depending only on $r$. Thus $T$ is bi-bornologous.
Hence $\supp\sigma$ is bi-bornologous.

Since $(p_X)_\bullet\sigma=\mu_X$ and
$(p_Y)_\bullet\sigma\asymp\mu_Y$, $\supp\sigma$ is boundedly dense.
Thus $\supp\sigma$ is a coarse correspondence.

Therefore $\sigma$ is a measured coarse correspondence between
$(X,\mu_X)$ and $(Y,\nu_Y)$.
Since $\nu_Y\asymp\mu_Y$, it follows that $X \simeq_\wmce Y$.
\end{proof}

The following lemma is needed in the proof of Proposition \ref{prop:h}.

\begin{lem}[Gronwall-type inequality] \label{lem:Gronwall}
Let $a\in\mathbb{R}$, and let
$f:[0,\infty)\to[0,\infty)$ be right-continuous and nondecreasing.
Suppose that
\[
f^+(t)
:=
\liminf_{h\downarrow0}
\frac{f(t+h)-f(t)}{h}
\ge af(t)
\]
for every $t\ge0$. Then
\[
f(t)\ge e^{at}f(0)
\]
for every $t\ge0$.
\end{lem}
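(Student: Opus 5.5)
The proof is a comparison argument against the exponential solution; it rests on the positivity of the Dini derivative of a suitable auxiliary function. First dispose of trivial cases. If $f(0)=0$ there is nothing to prove, since $f\ge 0$. If $a\le 0$, then $e^{at}f(0)\le f(0)\le f(t)$ by monotonicity. So assume $a>0$ and $f(0)>0$. Fix $\varepsilon\in(0,a)$ and set
\[
g(t):=e^{-(a-\varepsilon)t}f(t).
\]
The aim is to show that $g$ is nondecreasing. This gives $f(t)\ge e^{(a-\varepsilon)t}f(0)$, and letting $\varepsilon\to0$ yields the claim.

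Next compute the lower right Dini derivative of $g$. Since $e^{-(a-\varepsilon)t}$ is smooth and positive, the product rule for $\liminf$ gives
\[
g^+(t)=e^{-(a-\varepsilon)t}\bigl(f^+(t)-(a-\varepsilon)f(t)\bigr)\ge e^{-(a-\varepsilon)t}\,\varepsilon f(t)\ge e^{-(a-\varepsilon)t}\,\varepsilon f(0)>0.
\]
The second inequality uses that $f$ is nondecreasing. Thus $g^+>0$ everywhere. The function $g$ is also right-continuous, being a product of a continuous function and a right-continuous one.

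The main step is the standard lemma that a right-continuous function with $g^+(t)>0$ for all $t$ is nondecreasing. I prove it by contradiction. Suppose $g(t_1)<g(t_0)$ for some $t_0<t_1$. Let
\[
s:=\sup\{\,t\in[t_0,t_1]\mid g(t)\ge g(t_0)\,\}.
\]
The set is nonempty because it contains $t_0$.

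First, $g(s)\ge g(t_0)$. If $s$ belongs to the set, this is immediate. Otherwise $s$ is a limit of points $t_n\uparrow s$ with $g(t_n)\ge g(t_0)$, and this is where right-continuity alone is insufficient; I expect it to be the main obstacle. To handle it, use monotonicity of $f$. Since $f(s)\ge f(t_n)$, we get $g(s)=e^{-(a-\varepsilon)s}f(s)\ge e^{-(a-\varepsilon)(s-t_n)}g(t_n)$, and the factor tends to $1$. Hence $g(s)\ge g(t_0)$ after all.

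Consequently $s<t_1$, since $g(t_1)<g(t_0)$. Now $g^+(s)>0$ gives points $s+h$ with $h>0$ small, $s+h\le t_1$, and $g(s+h)>g(s)\ge g(t_0)$. This contradicts the definition of $s$ as a supremum. Hence $g$ is nondecreasing, and the lemma follows.

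In this argument right-continuity is used only to make $g$ right-continuous; monotonicity of $f$ handles the left limits.
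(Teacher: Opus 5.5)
Your proof is correct and is essentially the paper's argument: both compare $f$ with an exponential, make the lower right Dini derivative strictly positive by a small perturbation, and exclude a decrease via a rightmost ``good'' point, using the monotonicity of $f$ to rule out downward jumps. The paper instead perturbs additively, with $q(u)=e^{-au}f(u)+\varepsilon u$, and takes the largest maximizer of $q$ on $[s,t]$, which requires right-continuity to show the maximum is attained; your shift $a\to a-\varepsilon$ together with the supremum of a superlevel set never uses right-continuity, so the lemma holds for any nondecreasing $f\ge 0$, and the ``standard lemma'' you quote is false for merely right-continuous $g$ but your proof correctly supplies the missing no-downward-jump ingredient.
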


\begin{proof}
Set
\[
g(t):=e^{-at}f(t).
\]
Since $f$ is right-continuous and nondecreasing, $g$ is
right-continuous, has left limits, and has no downward jumps:
\[
g(t-)\le g(t)
\]
for every $t>0$.

For $h>0$, set $A_h:= (f(t+h)-f(t))/h$.
Since $f$ is nondecreasing, $A_h\ge0$. Therefore,
\[
g^+(t) = e^{-at} \liminf_{h\downarrow0}
\left( e^{-ah}A_h + \frac{e^{-ah}-1}{h}f(t) \right)
= e^{-at}\bigl(f^+(t)-af(t)\bigr) \ge 0.
\]

We claim that $g$ is nondecreasing. Suppose, to the contrary, that
there exist $0\le s<t$ such that
\[
g(t)<g(s).
\]
Choose $\varepsilon>0$ such that
$g(t)+\varepsilon t<g(s)+\varepsilon s$,
and set
$q(u):=g(u)+\varepsilon u$.
Then $q$ is right-continuous, has left limits, has no downward jumps,
and
\[
q^+(u)=g^+(u)+\varepsilon\ge\varepsilon>0.
\]

We first note that $q$ attains its maximum on $[s,t]$. Indeed, let
\[
M:=\sup_{u\in[s,t]}q(u)
\]
and choose a sequence $\{u_k\}$ in $[s,t]$ such that $q(u_k)\to M$.
After passing to a subsequence, we may assume that $u_k\to u$ and
that $\{u_k\}$ is monotone. If $u_k\downarrow u$, right-continuity gives
$q(u)=M$. If $u_k\uparrow u$, then
$M=q(u-)\le q(u)\le M$,
and hence again $q(u)=M$.

Let $u_0$ be the largest point at which $q$ attains its maximum.
Such a point exists because, if $u_0$ is the supremum of the set of
maximizers, the absence of downward jumps implies $q(u_0)=M$.
Since $q(t)<q(s)$, we have $u_0<t$. By the maximality of $u_0$,
$q(u_0+h)<q(u_0)$ for every sufficiently small $h>0$. Hence
\[
q^+(u_0)
=
\liminf_{h\downarrow0}
\frac{q(u_0+h)-q(u_0)}{h}
\le0,
\]
contradicting $q^+(u_0)\ge\varepsilon$.

Thus $g$ is nondecreasing. Consequently,
$e^{-at}f(t)=g(t)\ge g(0)=f(0)$,
which proves the lemma.
\end{proof}

The next two propositions are the relation between BGR and large-scale doubling conditions for a discrete space.
Since their proofs are easy, we omit them.

\begin{prop}
Let $\Gamma$ be a BGR discrete mm-space.
Then $\Gamma$ satisfies the large-scale doubling condition with $r_0 = 0$.
\end{prop}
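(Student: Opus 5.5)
We have to show that for every $r>0$ there is a constant $C_r$ with $\mu_\Gamma(B_{2r}(x)) \le C_r\,\mu_\Gamma(B_r(x))$ for all $x\in\Gamma$; then the large-scale doubling condition holds with $r_0=0$. The plan is to bound the big ball from above using condition BG together with condition BR, and to bound the small ball from below by the mass of its centre alone.

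\textbf{Lower bound.} Fix $r>0$ and $x\in\Gamma$. Since $x\in B_r(x)$ and $\mu_\Gamma$ is a nonnegative measure,
\[
\mu_\Gamma(B_r(x))\ge \mu_\Gamma(x).
\]
Moreover $\mu_\Gamma(x)>0$ because $\mu_\Gamma$ has full support and $\{x\}$ is open in the discrete space $\Gamma$. In particular the ratio we want to bound is well defined.

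\textbf{Upper bound.} By condition BG, $N_{2r}:=\sup_{y\in\Gamma}\#B_{2r}(y)<\infty$. By condition BR applied at scale $2r$, there is $\lambda_{2r}\ge1$ such that $d_\Gamma(x,y)\le 2r$ implies $\mu_\Gamma(y)\le\lambda_{2r}\mu_\Gamma(x)$. Summing over the at most $N_{2r}$ points of $B_{2r}(x)$ gives
\[
\mu_\Gamma(B_{2r}(x))=\sum_{y\in B_{2r}(x)}\mu_\Gamma(y)\le N_{2r}\lambda_{2r}\,\mu_\Gamma(x).
\]

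\textbf{Conclusion.} Combining the two bounds yields
\[
\mu_\Gamma(B_{2r}(x))\le N_{2r}\lambda_{2r}\,\mu_\Gamma(B_r(x))
\]
for every $r>0$ and every $x\in\Gamma$. So we may take $C_r:=N_{2r}\lambda_{2r}$, which depends only on $r$.

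The argument has no real obstacle. The only point needing care is that the constant must be uniform in $x$, and this is exactly what conditions BG and BR provide.
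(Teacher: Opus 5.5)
Your argument is correct, and it is exactly the easy argument the paper has in mind when it omits this proof: condition BG together with condition BR at scale $2r$ gives $\mu_\Gamma(B_{2r}(x))\le N_{2r}\lambda_{2r}\,\mu_\Gamma(x)\le N_{2r}\lambda_{2r}\,\mu_\Gamma(B_r(x))$ uniformly in $x$. The same counting appears in the proofs of Lemma~\ref{lem:Cinfty} and Proposition~\ref{prop:H0-non-ame}, and the case $r=0$ that ``$r_0=0$'' formally includes is trivial since $B_0(x)=\{x\}$.
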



\begin{prop}
Let $\Gamma$ be a $r_0$-discrete mm-space for some
$r_0 > 0$.  If $\Gamma$ is large-scale doubling with $r_0$, then $\Gamma$ satisfies condition BGR.
\end{prop}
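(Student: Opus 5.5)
The plan is to mimic the proof of Lemma~\ref{lem:doubling-Gamma}, using the observation that discreteness makes small balls trivial. I take ``$r_0$-discrete'' in the sense used in the proof of Lemma~\ref{lem:doubling-Gamma}: any two distinct points of $\Gamma$ are at distance greater than $r_0$. Then for every $y\in\Gamma$ the closed ball $B_{r_0}(y)$ is $\{y\}$, so
\[
\mu_\Gamma(B_{r_0}(y))=\mu_\Gamma(y).
\]
I also take ``large-scale doubling with $r_0$'' to mean that the iterated form \eqref{eq:CrR} holds for all $R\ge r\ge r_0$ with constants $C_{r,R}$. The idea is to compare every point mass with a ball of radius $\ge r_0$ centred at a nearby point.

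\emph{Condition BR.} Let $r>0$ and $x,y\in\Gamma$ with $d_\Gamma(x,y)\le r$. Then $x\in B_{r+r_0}(y)$, so
\[
\mu_\Gamma(x)\le\mu_\Gamma(B_{r+r_0}(y))\le C_{r_0,r+r_0}\,\mu_\Gamma(B_{r_0}(y))=C_{r_0,r+r_0}\,\mu_\Gamma(y).
\]
By symmetry the same bound holds with $x$ and $y$ exchanged, which is BR with constant $C_{r_0,r+r_0}$.

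\emph{Condition BG.} Fix $r>0$ and $x\in\Gamma$. For each $y\in B_r(x)$ we have $B_r(x)\subset B_{2r+r_0}(y)$. Hence, with $C:=C_{r_0,2r+r_0}$,
\[
\mu_\Gamma(B_r(x))\le C\,\mu_\Gamma(B_{r_0}(y))=C\,\mu_\Gamma(y).
\]
Summing over the points of $B_r(x)$ gives
\[
\#B_r(x)\cdot\mu_\Gamma(B_r(x))\le C\sum_{y\in B_r(x)}\mu_\Gamma(y)=C\,\mu_\Gamma(B_r(x)).
\]
The quantity $\mu_\Gamma(B_r(x))$ is positive because the measure has full support, and finite because it is boundedly finite. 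Dividing by it yields $\#B_r(x)\le C$ uniformly in $x$, which is BG.

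The only delicate point is the discreteness convention. If $r_0$-discrete only meant distances $\ge r_0$, then $B_{r_0}(y)$ could contain other points, and the identity $\mu_\Gamma(B_{r_0}(y))=\mu_\Gamma(y)$ would fail. I expect this to be settled by the convention of Lemma~\ref{lem:doubling-Gamma}, where distinct points are at distance $>r_0$; otherwise I would first have to reduce to that case. Apart from this, the argument is a direct repetition of the estimates in Lemma~\ref{lem:doubling-Gamma}.
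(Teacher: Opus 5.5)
Your proof is correct and follows the argument the paper has in mind; the paper omits this proof as easy, and your proof is the proof of Lemma~\ref{lem:doubling-Gamma} with the identity $\mu_\Gamma(B_{r_0}(y))=\mu_\Gamma(y)$ in place of the comparison $\mu_X(B_{r_0}(x))\le\mu_\Gamma(x)\le\mu_X(B_{2r_0}(x))$. On your caveat, the strict convention (distinct points at distance $>r_0$) is necessary, not merely convenient, so no reduction would be possible under the weaker one: countably many points at mutual distance exactly $r_0$ with masses $2^{-n}$ form a space that is large-scale doubling with $r_0$ (every ball of radius $\ge r_0$ is the whole space) yet violates both BG and BR.
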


%
%


\end{document}